\newcommand{\E}{\mathbb{E}}
\newcommand{\p}{\mathbb{P}}
\newcommand{\opL}{\mathcal{L}}
\newcommand{\T}{\mathsf{T}}
\newtheorem{lemma}{Lemma}
\newtheorem{theorem}{Theorem}
\newtheorem{condition}{Condition}
\newtheorem{definition}{Definition}
\newtheorem{remark}{Remark}
\newtheorem{proposition}{Proposition}
\numberwithin{lemma}{section}
\numberwithin{theorem}{section}
\numberwithin{condition}{section}
\numberwithin{definition}{section}
\numberwithin{remark}{section}
\numberwithin{proposition}{section}
\title[Moderate Deviations for Systems of Slow-Fast Diffusions]{Moderate Deviations Principle for Systems of Slow-Fast Diffusions}
\author{Matthew R. Morse}
\author{Konstantinos Spiliopoulos}
\address{Department of Mathematics and Statistics \\
Boston University \\
Boston, MA 02215}
\email[Matthew R. Morse]{mrmorse@bu.edu}
\email[Konstantinos Spiliopoulos]{kspiliop@math.bu.edu}
\thanks{The present research was partially supported by the National Science Foundation (DMS 1550918)}
\begin{document}

\begin{abstract}
In this paper, we prove the moderate deviations principle (MDP) for a general system of slow-fast dynamics. We provide a unified approach, based on weak convergence ideas and stochastic control arguments, that cover both the averaging and the homogenization regimes. We allow the coefficients to be in the whole space and not just the torus and allow the noises driving the slow and fast processes to be correlated arbitrarily. Similar to the large deviation case, the methodology that we follow allows construction of provably efficient Monte Carlo methods for rare events that fall into the moderate deviations regime.
\end{abstract}

\maketitle

\section{Introduction}
The goal of this paper is to study moderate deviations for a large class of multiscale diffusion processes with small noise. In particular, we consider the system of slow-fast dynamics
\begin{align}
  dX_t^\varepsilon &= \left[ \frac{\varepsilon}{\delta} b(X_t^\varepsilon, Y_t^\varepsilon) + c(X_t^\varepsilon, Y_t^\varepsilon) \right] \,dt + \sqrt{\varepsilon} \sigma(X_t^\varepsilon, Y_t^\varepsilon) \,dW_t  \label{E:generalSDE}\\
  dY_t^\varepsilon &= \frac{1}{\delta} \left[ \frac{\varepsilon}{\delta} f(X_t^\varepsilon, Y_t^\varepsilon) + g(X_t^\varepsilon, Y_t^\varepsilon) \right] \,dt + \frac{\sqrt{\varepsilon}}{\delta} \left[ \tau_1(X_t^\varepsilon, Y_t^\varepsilon) \,dW_t + \tau_2(X_t^\varepsilon, Y_t^\varepsilon) \,dB_t \right]  \notag \\
  & X_0^\varepsilon = x_0, \quad Y_0^\varepsilon = y_0 \notag
\end{align}
for $t \in [0,1]$ such that $(X_t^\varepsilon, Y_t^\varepsilon) \in \mathbb{R}^n \times \mathbb{R}^d$. For convenience, we refer to the state space of $Y^\varepsilon$ as $\mathcal{Y}$. The parameter $\varepsilon \ll 1$ represents the strength of the noise while $\delta \ll 1$ is the time-scale separation parameter. $W_t$ and $B_t$ are independent $m$-dimensional Brownian motions.

In \eqref{E:generalSDE}, $X^\varepsilon$ is the slow motion and $Y^\varepsilon$ is the fast motion. Depending on the order in which $\varepsilon, \delta$ go to zero, we get different behavior, and in particular we are interested in the following regimes:
\begin{equation*}
  \lim_{\varepsilon \downarrow 0} \frac{\varepsilon}{\delta} = \begin{cases}
     \infty, & \text{Regime 1}, \\
     \gamma \in (0, \infty), & \text{Regime 2}.
   \end{cases}
\end{equation*}

The goal of moderate deviations is to study the behavior of the process ($X^{\varepsilon}$ in our case) in the regime between the central limit theorem behavior and the large deviation behavior. To be more precise, let $h(\varepsilon) \to + \infty$ such that $\sqrt{\varepsilon} h(\varepsilon) \to 0$ as $\varepsilon \downarrow 0$, denote by $\bar{X}_t = \lim_{\varepsilon \downarrow 0} X_t^\varepsilon$ (in the appropriate sense) the law of large numbers, and define the moderate deviation process
\begin{equation*} \label{E:deviations}
  \eta_t^\varepsilon = \frac{X_t^\varepsilon - \bar{X}_t}{\sqrt{\varepsilon} h(\varepsilon)}.
\end{equation*}
The goal is to derive the large deviation principle for $\eta_t^\varepsilon$, which is the moderate deviations principle for $X_{t}^{\varepsilon}$. Notice that if $h(\varepsilon) = 1$ then the limiting behavior of $\eta_t^\varepsilon$ is that of the central limit theorem (CLT) whereas if $h(\varepsilon) = 1 / \sqrt{\varepsilon}$ then we would get the large deviation result.

Both large and moderate deviations theory have a long history. For general results on large deviations, we refer the interested reader to classical manuscripts such as \cite{FW84 , DE97}. In regards to moderate deviations for diffusion processes, one of the first results was derived in \cite{BF77, F78} even though the analysis there was restricted to the setup with $b = \sigma = 0$ and under abstract conditions. In \cite{G03} the author studies the moderate deviations for \eqref{E:generalSDE} in the case of $\varepsilon = \delta$ with $b=0$ (averaging regime) and with the fast process $Y_t^\varepsilon$ being independent of the driving noise of the slow process $X_t^\varepsilon$, using different methods. In \cite{GL05} the authors study the MDP for integrated functionals of systems like \eqref{E:generalSDE}, in the averaging regime (i.e. when $b=0$) and with the fast process being independent of the driving noise of the slow process. In addition we also mention here the recent work of \cite{DJ15} where the moderate deviations principle (MDP) is derived for recursive stochastic algorithms (without multiple scales) using the weak convergence approach of \cite{DE97}.

We conclude this literature review by mentioning that the CLT for $X_t^\varepsilon$, i.e.\ when $h(\varepsilon) = 1$, has been derived in \cite{S14}. The LDP for $X_t^\varepsilon$ is studied in a series of papers \cite{DS12, S13, S15} for the cases of fast motion in periodic or in random stationary environments. Large deviations results for averaging problems have also been obtained in \cite{FS,Veretennikov2,CL10}.

The novelty of this paper is fourfold. First, we obtain an explicit form for the action functional of the MDP which is given in terms of solutions to auxiliary, but specific, Poisson equations that can be solved either analytically or numerically. This makes the computation of the action functional possible for a wide range of models, in contrast to existing literature where that was possible only for a more restrictive class of models (for instance we provide the MDP also in the homogenization regime, i.e., in Regime 1 with $b\neq 0$). We also illustrate this with a number of examples. Second, the method of our proof relies on the weak convergence approach of \cite{DE97} which allows us to connect the moderate deviations problem with a stochastic control problem. As in the case of large deviations (see \cite{S13, DSW12}), the solution to the stochastic control problem gives vital information for the design of efficient Monte Carlo methods for estimation of moderate deviations probabilities of interest. We plan to address the design of Monte Carlo methods based on the moderate deviations principle in a subsequent work. Third, we treat both the averaging regime, Regime 2 or Regime 1 with $b = 0$, and the homogenization regime, Regime 1 with $b \ne 0$, in a unified way. Fourth, the fast process $Y_t^\varepsilon$ is allowed to be both fully correlated with the slow process $X_t^\varepsilon$ and is also allowed to take values in the whole Euclidean space and not just on the torus. The latter fact complicates the mathematical analysis significantly and in particular, the proof of tightness. We gather all of the necessary technical results in Appendixes B and C.

The rest of the paper is organized as follows. In Section 2, we introduce notation and model conditions and state the main result as Theorem \ref{T:main}. In Section 3, we present examples of the MDP. In Sections 4, 5, and 6 we prove Theorem \ref{T:main}. In Section 4, we introduce the stochastic control representation, show the connection with the MDP, and define the concept of viable pairs, which is essential to the proof. In Section 5, we prove the MDP for Regime 1. In Section 6, we discuss the changes in the proof necessary for Regime 2. In the Appendix, we prove several auxiliary lemmas which are used in the main proof and constitute the main technical challenges.

\section{Notation, Conditions, and Main Results}

\subsection{Notation, Conditions, and Preliminaries}
We work with the canonical filtered probability space $(\Omega, \mathscr{F}, \mathbb{P})$ equipped with a filtration $\mathscr{F}_t$ that is right continuous and $\mathscr{F}_0$ contains all $\mathbb{P}$-negligible sets.

For given sets $A,B$, for $i,j\in\mathbb{N}$ and $\alpha\in(0,1)$ we denote by $\mathcal{C}_b^{i, j + \alpha}(A \times B)$, the space of functions with $i$ bounded derivatives in $x$ and $j$ derivatives in $y$, with all partial derivatives being $\alpha$-H\"{o}lder continuous with respect to $y$, uniformly in $x$.

We impose the following conditions on the SDE \eqref{E:generalSDE}.

\begin{condition} \label{C:growth}
\begin{enumerate}[(i)]
\item Let $\tilde{h}$ be either of the functions $b$ or $c$. $\tilde{h}(\cdot, y) \in \mathcal{C}^2(\mathbb{R}^n)$ for all $y \in \mathcal{Y}$, $\nabla_y \nabla_y \tilde{h} \in \mathcal{C}(\mathbb{R}^n\times\mathcal{Y})$, $\tilde{h}(x, \cdot) \in \mathcal{C}^\alpha(\mathcal{Y})$ uniformly in $x \in \mathbb{R}^n$ for some $\alpha \in (0, 1)$, and there exist $K$ and $q_{b},q_{c}\geq 0$ such that
\begin{align*}
    \lvert b(x, y) \rvert + \lVert \nabla_x b(x, y) \rVert + \lVert \nabla_x \nabla_x b(x, y) \rVert \le K (1 + \lvert y \rvert^{q_{b}})\nonumber\\
    \lvert c(x, y) \rvert + \lVert \nabla_x c(x, y) \rVert + \lVert \nabla_x \nabla_x c(x, y) \rVert \le K (1 + \lvert y \rvert^{q_{c}})\nonumber
\end{align*}
\item For every $N >0$ there exists a constant $C(N)$ such that for all $x_1, x_2 \in \mathbb{R}^n$ and $\lvert y \rvert \le N$, the diffusion matrix $\sigma$ satisfies
\begin{equation*}
    \lVert \sigma(x_1, y) - \sigma(x_2, y) \rVert \le C(N) \lvert x_1 - x_2 \rvert.
\end{equation*}
Moreover, there exists $K > 0$ and $q_{\sigma} \geq 0 $ such that
\begin{equation*}
    \lVert \sigma(x, y) \rVert \le K (1 + \lvert y \rvert^{q_{\sigma}}).
\end{equation*}
\item The functions $f(x, y)$, $g(x, y)$, $\tau_1(x, y)$, and $\tau_2(x, y)$ are $\mathcal{C}_b^{2, 2 + \alpha}(\mathbb{R}^n\times \mathcal{Y})$ with $\alpha \in (0, 1)$. 
In addition, $g$ is uniformly bounded.
\end{enumerate}
\end{condition}

\begin{condition} \label{C:ergodic}
\begin{enumerate}[(i)]
\item The diffusion matrix $\tau_1 \tau_1^\T + \tau_2 \tau_2^\T$ is uniformly continuous and bounded, nondegenerate and there exist constants $\beta_{1},\beta_{2}>0$ such that
\[
0<\beta_{1}\leq\frac{\left<(\tau_{1}\tau_{1}^{\T}(x,y)+\tau_{2}\tau_{2}^{\T}(x,y))y,y\right>}{|y|^{2}}\leq \beta_{2}.
\]
\item There exists $R,\Gamma>0$ and $r\geq 0$ such that in Regime 1,
\[
\sup_{x \in \mathbb{R}^n} f(x, y) \cdot y \leq  - \Gamma |y|^{r+1} \text{ for } |y|>R,
\]
and in  Regime 2,
\[
\sup_{x \in \mathbb{R}^n} (\gamma f(x, y) + g(x, y)) \cdot y \leq  - \Gamma |y|^{r+1} \text{ for } |y|>R.
\]
\end{enumerate}
\end{condition}

For each Regime $i = 1, 2$, define an operator $\opL_{i,x}$ (treating $x$ as a parameter) by
\begin{align}
  \opL_{1,x} F(y) &= \big( \nabla_y F(y) \big) f(x, y) + \frac{1}{2} (\tau_1 \tau_1^\T + \tau_2 \tau_2^\T) (x, y) : \nabla_y \nabla_y F(y),\label{Eq;Operators} \\
  \opL_{2,x} F(y) &= \big( \nabla_y F(y) \big) (\gamma f(x, y) + g(x,y)) + \gamma \frac{1}{2} (\tau_1 \tau_1^\T + \tau_2 \tau_2^\T) (x, y) : \nabla_y \nabla_y F(y)\nonumber
\end{align}
where the notation $A:B$ for two $n \times k$ matrices means the trace of their product,
\begin{equation*}
  A:B = \sum_{i=1}^n \sum_{j=1}^k a_{ij} b_{ij}.
\end{equation*}
For a $k \times k$ matrix $A$ and a $n$-dimensional vector--valued function of a $k$-dimensional vector $f(x)$ define $A : \nabla \nabla f$ as a $n$-dimensional vector where component $i$ is equal to $A : \nabla \nabla f_i$. Also, for notational convenience we sometimes collect the variables at the end of the expression and we write
\begin{equation*}
  \tau \tau^\T (x, y) = \tau(x, y) \tau(x, y)^\T.
\end{equation*}

Operators $\opL_{1,x}$ and $\opL_{2,x}$ are the infinitesimal generators for the processes that play the role of the fast motion (and with respect to which averaging is being performed) in Regimes 1 and 2 respectively.  Condition \ref{C:ergodic} guarantees that the fast process in each Regime $i = 1, 2$ has a unique invariant measure, denoted by $\mu_{i, x}(dy)$, for each $x\in\mathbb{R}^{n}$.

Because the fast motion takes values in an unbounded space, $\mathbb{R}^{d}$, the constants $q_{b},q_{c},q_{\sigma}$ that determine the growth of the coefficients from Condition \ref{C:growth} and the constant $r$ from Condition \ref{C:ergodic} that determines the recurrent properties of the fast component, will need to be related in order for the subsequent tightness argument to go through. In particular, we have Condition \ref{C:Tightness}.
\begin{condition}\label{C:Tightness}
Consider the constants $q_{b},q_{c},q_{\sigma}$ from Condition \ref{C:growth} and the constant $r$ from Condition \ref{C:ergodic}. Define $q_{b,c}=\max\{q_{b},q_{c}\}$ and $q_{F}=\max\{q_{b},q_{c},(q_{b}+1-r)^{+}\}$, where for any $x\in\mathbb{R}$ we have set $(x)^{+}=x 1_{x\geq 0}$. Then in Regime 1, we assume that
\begin{align*}
&\max\left\{(q_{F}+1-r)^{+}+q_{b,c}, (q_{F}+2(1-r))^{+}+q_{b,c},(q_{F}+1-r)^{+}+2q_{\sigma},\right.\nonumber\\
&\qquad\left.(q_{F}+2(1-r))^{+}+2q_{\sigma},(q_{F}+3(1-r))^{+}+2q_{\sigma}\right\}\leq r,\nonumber\\
&\max\left\{q_{F},q_{\sigma}, (q_{F}+1-r)^{+}\right\}<r.
\end{align*}
In Regime 2, we assume
\begin{align*}
&\max\left\{(q_{b,c}+1-r)^{+}+q_{b,c}, (q_{b,c}+2(1-r))^{+}+q_{b,c},(q_{b,c}+1-r)^{+}+2q_{\sigma},\right.\nonumber\\
&\qquad\left.(q_{b,c}+2(1-r))^{+}+2q_{\sigma},(q_{b,c}+3(1-r))^{+}+2q_{\sigma}\right\}\leq r,\nonumber\\
&\max\left\{q_{b,c},q_{\sigma}, (q_{b,c}+1-r)^{+}\right\}<r.
\end{align*}
\end{condition}

\begin{remark}Of course, it is clear that if the fast process is the Ornstein-Uhlenbeck process for example, where $r=1$, Condition \ref{C:Tightness} can be dramatically simplified, see Example 1 in Section \ref{S:Examples}. In addition, it is also clear that Condition \ref{C:Tightness} places some restrictions on $r$ as well. For example, if all the coefficients are bounded, in which case $q_{b}=q_{c}=q_{\sigma}=0$, then we need to have that $r\ge4/5$ for Regime 1 and $r\geq3/4$ for Regime 2.
\end{remark}

In addition, in Regime 1, we impose the following centering condition.
\begin{condition} \label{C:center}
The drift term $b$ satisfies
\begin{equation*}
    \int_\mathcal{Y} b(x, y) \mu_{1, x}(dy) = 0.
\end{equation*}
\end{condition}

Then by the results in \cite{PV01, PV03}, which we collected in Theorem \ref{T:regularity} in the Appendix, for each $\ell \in 1, \dots, n$, there is a unique, twice differentiable function $\chi_\ell(x, y)$ in the class of functions that grows at most polynomially in $\lvert y \rvert$ that satisfies the equation
\begin{equation} \label{E:cell}
  \opL_{1,x} \chi_\ell(x,y) = - b_\ell(x, y), \qquad \int_\mathcal{Y} \chi_\ell(x, y) \mu_{1,x}(dy) = 0, \text{ for }\ell=1,\cdots,n,
\end{equation}
where $b_{\ell}(x,y)$ is the $\ell^{\text{th}}$ component of the vector $b(x,y)=\left(b_{1}(x,y),\cdots, b_{n}(x,y)\right)$. Let us set $\chi(x,y) = (\chi_1(x,y), \dots, \chi_n(x, y))$. Define the function $\lambda_i(x, y) \colon \mathbb{R}^n \times \mathcal{Y} \to \mathbb{R}^n$ under Regime $i$ by
\begin{align*}
  \lambda_1(x, y) &= \big( \nabla_y \chi (x, y) \big) g(x, y) + c(x, y) \\
  \lambda_2(x, y) &= \gamma b(x, y) + c(x,y).
\end{align*}

Under Regime $i$, for any function $G(x,y)$, define the averaged function $\bar{G}$ by
\begin{equation} \label{E:Gbar}
  \bar{G}(x) = \int_{\mathcal{Y}} G(x, y) \mu_{i,x}(dy).
\end{equation}
It follows that $\bar{G}$ inherits the continuity and differentiability properties of $G$. In particular, for each regime,
\begin{equation*}
  \bar{\lambda}_i(x) = \int_{\mathcal{Y}} \lambda_i(x, y) \mu_{i,x}(dy).
\end{equation*}

Then by an argument similar to that of Theorem 3.2 in \cite{S13}, as $\varepsilon \downarrow 0$, in Regime $i$ we have the averaging result $X_t^\varepsilon \to \bar{X}_t$ in probability, where $\bar{X}_t$ is defined by
\begin{equation*} \label{E:averagedSDE}
  d\bar{X}_t = \bar{\lambda}_i(\bar{X}_t) \,dt, \qquad \bar{X}_0 = x_0.
\end{equation*}

Lastly, for Regime $i=1,2$, introduce the function $\Phi_i(x, y)$, given by the PDE
\begin{equation} \label{E:Phi}
  \opL_{i,x} \Phi_i(x, y) = - (\lambda_i(x,y) - \bar{\lambda}_i(x) ), \qquad \int_\mathcal{Y} \Phi_i(x, y) \mu_{i,x}(dy) = 0.
\end{equation}

Under our assumptions, each one of $\lambda_i-\bar{\lambda_{i}}$, for $i=1,2$, satisfy the assumptions of Theorem \ref{T:regularity}, part (iii), and thus
by Theorem \ref{T:regularity}, \eqref{E:Phi} has a unique classical solution in the class of functions which grow at most polynomially in $\lvert y \rvert$ for every $x$.

Last but not least we assume uniqueness of a strong solution.
\begin{condition}\label{C:StrongSolution}
We assume that the SDE \eqref{E:generalSDE} has a unique strong solution.
\end{condition}
\begin{remark}
Condition \ref{C:StrongSolution} holds for example if the coefficients are Lipshcitz continuous with at most linear growth. However, these conditions can be significantly weakened, see for example \cite{Verettenikov1}. Conditions \ref{C:growth},  \ref{C:Tightness} and \ref{C:StrongSolution} should be considered together and it is clear that depending on the value of $r$ in the recurrence Condition \ref{C:ergodic}, Conditions \ref{C:growth},  \ref{C:Tightness} will directly imply Condition \ref{C:StrongSolution}. For example if $r=1$ then the coefficients cannot grow faster than linearly in $y$ and are always assumed to be bounded in $x$, so in that case for instance Condition \ref{C:StrongSolution} instantly holds.
\end{remark}

\subsection{Main Results}

By \cite{DE97}, the LDP for $\eta_t^\varepsilon$ is equivalent to the Laplace principle, which states that for any bounded continuous function $a \colon \mathcal{C}([0, 1]; \mathbb{R}^n) \to \mathbb{R}$,
\begin{equation} \label{E:LaplacePrinciple}
  \lim_{\varepsilon \downarrow 0} - \frac{1}{h^2(\varepsilon)} \log \E \left[ \exp\left\{- h^2(\varepsilon) a(\eta^\varepsilon) \right\} \right] = \inf_{\xi \in C([0, 1]; \mathbb{R}^n)} \left( S(\xi) + a(\xi) \right)
\end{equation}
where $S(\xi)$ is called the action functional. In this paper we essentially prove \eqref{E:LaplacePrinciple} and Theorem \ref{T:main} identifies the action functional $S(\xi)$. In order to state Theorem \ref{T:main}, we need to know the relative rates at which $\delta$, $\varepsilon$, and $1 / h(\varepsilon)$ vanish. In particular, in Regime $i$, $i = 1, 2$, define $j_1$, $j_2$ by
\begin{equation}\label{Eq:LimitingConstants}
  j_1 = \lim_{\varepsilon \downarrow 0} \frac{\delta / \varepsilon}{\sqrt{\varepsilon} h(\varepsilon)}<\infty, \qquad
  j_2 = \lim_{\varepsilon \downarrow 0} \frac{\varepsilon / \delta - \gamma}{\sqrt{\varepsilon} h(\varepsilon) }<\infty.
\end{equation}
$j_1$, $j_2$ specifies the relative rate at which $\varepsilon/\delta$ goes to its limit and $h(\varepsilon)$  goes to infinity. In order for a moderate deviations principle to hold, we require that $j_1$, $j_2$ be finite.

The main result of this paper is the following theorem.
\begin{theorem} \label{T:main}
Let Conditions \ref{C:growth}, \ref{C:ergodic}, \ref{C:Tightness} and \ref{C:StrongSolution} be satisfied. Additionally, under Regime 1, let Condition \ref{C:center} be satisfied. Then under Regime $i$, $i = 1,2$, the process $\{X^{\varepsilon},\varepsilon>0\}$ from \eqref{E:generalSDE} satisfies the MDP, with the action functional $S(\xi)$ given by
\begin{equation*}
  S(\xi) = \frac{1}{2} \int_0^1 \left(\dot{\xi}_s - \kappa \left(\bar{X}_s, \xi_{s} \right) \right)^\T q^{-1}(\bar{X}_s) \left(\dot{\xi}_s - \kappa \left(\bar{X}_s, \xi_{s} \right) \right) ds
\end{equation*}
if $\xi \in \mathcal{C}([0, 1]; \mathbb{R}^n)$ is absolutely continuous, and $\infty$ otherwise.
Under Regime 1, we have
\begin{align}
 \label{E:rDef} \kappa(x, \eta) &= \big( \nabla_x \bar{\lambda}_1(x) \big) \eta + j_1 \int_\mathcal{Y} \big( \nabla_y \Phi_1(x,y) \big) g(x, y) \mu_{1,x}(dy) \\
 \label{E:qDef} q(x) &= \int_\mathcal{Y} \left( \alpha_1 \alpha_1^\T(x, y) + \alpha_2 \alpha_2^\T(x, y) \right) \mu_{1,x}(dy) \\
 \label{E:alphaDef} \alpha_1(x,y) &= \sigma(x,y) + \big( \nabla_y \chi(x, y) \big) \tau_1(x, y), \
  \alpha_2(x,y) = \big( \nabla_y \chi(x, y) \big) \tau_2(x, y).
\end{align}

Under Regime 2, we have
\begin{align*}
  &\kappa(x, \eta) =\big( \nabla_x \bar{\lambda}_2 (x) \big) \eta+j_2 \int_\mathcal{Y} \left[ b(x, y)  -\frac{1}{\gamma} \big( \nabla_y \Phi_2(x, y) \big)  g(x, y)  \right] \mu_{2, x} (dy)\\
  &q(x) = \int_\mathcal{Y} \left( \alpha_1 \alpha_1^\T(x, y) + \alpha_2 \alpha_2^\T(x, y) \right) \mu_{2, x}(dy) \\
    &\alpha_1(x,y) = \sigma(x,y) + \big( \nabla_y \Phi_2(x, y) \big) \tau_1(x, y), \quad
  \alpha_2(x,y) = \big( \nabla_y \Phi_2(x, y) \big) \tau_2(x, y),
\end{align*}
where the finite constants $j_{1}$, $j_{2}$ are defined in (\ref{Eq:LimitingConstants}).
\end{theorem}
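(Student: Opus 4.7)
The plan is to follow the weak convergence / stochastic control strategy of Dupuis--Ellis that has been used in the large deviations setting for this same system (e.g.\ \cite{DSW12,S13}) and to reduce the MDP to a control problem whose limit can be identified explicitly via the Poisson equations for $\chi$ and $\Phi_i$. The first step is to apply the Bou\'{e}--Dupuis representation to
\[
-\frac{1}{h^2(\varepsilon)} \log \E\bigl[\exp\{-h^2(\varepsilon) a(\eta^\varepsilon)\}\bigr]
= \inf_{u=(u_1,u_2)} \E\Bigl[\tfrac{1}{2}\!\int_0^1\!(|u_1|^2+|u_2|^2)\,ds + a(\tilde\eta^{\varepsilon,u})\Bigr],
\]
where $\tilde\eta^{\varepsilon,u}$ is the moderate deviations process built from the SDE \eqref{E:generalSDE} after adding a drift $u_1/h(\varepsilon)$ to $dW$ and $u_2/h(\varepsilon)$ to $dB$. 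The Laplace principle \eqref{E:LaplacePrinciple} is established by proving an upper and a lower bound on this infimum.

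The second step is to write the controlled SDE for $\tilde\eta^{\varepsilon,u}$. Subtracting $\bar X$ and dividing by $\sqrt{\varepsilon}h(\varepsilon)$, a first-order Taylor expansion of $\bar{\lambda}_i$ produces the linear-in-$\eta$ term $(\nabla_x\bar{\lambda}_i)(\bar X_s)\,\tilde\eta^{\varepsilon,u}_s$, while the contributions that remain singular as $\varepsilon\downarrow 0$ come from (a) the term of order $\varepsilon/\delta$ times $b$ under Regime~1 (requiring Condition \ref{C:center}), and (b) the averaging residual $\lambda_i-\bar{\lambda}_i$. For (a) one applies It\^{o}'s formula to $\chi(\tilde X^\varepsilon,\tilde Y^\varepsilon)$ using $\opL_{1,x}\chi=-b$ from \eqref{E:cell}, which trades the singular drift for a martingale plus manageable terms producing the coefficients $\alpha_1,\alpha_2$ in \eqref{E:alphaDef} and a correction of order $j_1$. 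For (b) one uses $\Phi_i$ from \eqref{E:Phi} in the same manner. The constants $j_1,j_2$ appear exactly where $\delta/\varepsilon$ or $(\varepsilon/\delta-\gamma)$ is compared to the moderate scale $\sqrt{\varepsilon}h(\varepsilon)$.

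The third step, which I expect to be the main obstacle, is establishing tightness of the family $\{(\tilde\eta^{\varepsilon,u},P^\varepsilon)\}$, where $P^\varepsilon$ is the occupation measure on $\mathbb{R}^m\times\mathbb{R}^m\times\mathcal{Y}\times[0,1]$ encoding the controls and the fast process, subject to a uniform bound on the $L^2$ norm of $(u_1,u_2)$. Because the fast process lives on all of $\mathbb{R}^d$ and the coefficients have polynomial growth of order $q_b,q_c,q_\sigma$, controlling moments of $\tilde Y^{\varepsilon,u}$ and of the Poisson solutions $\chi,\Phi_i$ (whose derivatives themselves grow polynomially via Theorem~\ref{T:regularity}) requires exactly the balance of exponents in Condition~\ref{C:Tightness}; these are the technical estimates deferred to Appendices B and C. Viable pairs $(\xi,P)$ are defined as the weak limits, and one shows that any such limit satisfies $\dot\xi_s=\kappa(\bar X_s,\xi_s)+q^{1/2}(\bar X_s)\,v(s)$ with the marginals of $P$ in $y$ equal to $\mu_{i,\bar X_s}(dy)$, so that the control cost along the viable pair is bounded below by $S(\xi)$.

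The fourth step is to convert these into the two Laplace bounds. For the upper bound, given $\xi$ and any admissible $u$, pass to the limit along a subsequence, use continuity of $a$ and Fatou/lower semicontinuity of the quadratic cost to conclude
\[
\liminf_{\varepsilon\downarrow 0}\inf_u \E\Bigl[\tfrac12\!\int |u|^2 + a(\tilde\eta^{\varepsilon,u})\Bigr] \geq \inf_{\xi}\bigl(S(\xi)+a(\xi)\bigr).
\]
For the lower bound, mollify a near-optimizer $\xi^\ast$ of the right-hand side of \eqref{E:LaplacePrinciple} and read off explicit feedback controls $u_1^\ast,u_2^\ast$ from the identity $\dot\xi^\ast - \kappa(\bar X,\xi^\ast) = q^{1/2}(\bar X)\,v^\ast$ combined with \eqref{E:qDef}--\eqref{E:alphaDef}, plug them into the prelimit, and verify convergence using the same tightness and averaging as in Regime~$i$. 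The Regime 2 proof is essentially identical after replacing $\chi$ with $\Phi_2$ and $\opL_{1,x}$ with $\opL_{2,x}$, which is why Section 6 can be brief.
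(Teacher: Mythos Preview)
Your proposal is correct and follows essentially the same route as the paper: Bou\'{e}--Dupuis representation, controlled pair $(\eta^{\varepsilon,u},P^{\varepsilon,\Delta})$ with an occupation measure, It\^{o} expansions using $\chi$ and $\Phi_i$ to produce the $\alpha_1,\alpha_2$ and the $j_1,j_2$ corrections, tightness (with Condition~\ref{C:Tightness} controlling the polynomial growth in $y$), identification of limits as viable pairs, and the two Laplace bounds. Two small remarks: (i) your labels for the two bounds are swapped relative to the paper's convention---the $\liminf\ge$ inequality obtained via Fatou is the Laplace \emph{lower} bound (Section~5.3), and the construction of a nearly optimal feedback control is the Laplace \emph{upper} bound (Section~5.5); (ii) you omit the separate verification that the level sets of $S$ are compact (Section~5.4), which the paper treats as an independent ingredient of the full LDP rather than a consequence of the two Laplace inequalities.
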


\begin{remark} Note that in either Regime, the function $\kappa(x, \eta)$ is affine in $\eta$ and the function $q(x)$ is constant in $\eta$. This is expected by the nature of moderate deviations. In the large deviations case, see \cite{DS12, S13}, the corresponding $\kappa(x)$ and $q(x)$ are nonlinear functions of $x$. The affine structure of $\kappa(x, \eta)$ is what makes the moderate deviations very appealing for the design of Monte Carlo simulation methods, as it makes the solution to the associated Hamilton-Jacobi-Bellman equation much easier to obtain. We plan to explore this in detail in a follow up work.
\end{remark}

\section{Examples}\label{S:Examples}

In this section we present some concrete examples to illustrate Theorem \ref{T:main}.

\subsection{Example 1}
Consider the system of one-dimensional processes
\begin{align*}
  dX_t^\varepsilon &= b(X_t^\varepsilon, Y_{t}^\varepsilon) \,dt + \sqrt{\varepsilon} \sigma(X_t^\varepsilon, Y_{t}^\varepsilon) \,dW_t, \qquad X_0^\varepsilon = x_0, \\
  dY_t^\varepsilon &= - \frac{1}{\varepsilon} \frac{1}{2} Y_t^\varepsilon \,dt + \frac{1}{\sqrt{\varepsilon}} dB_t \notag
\end{align*}
where $B$ and $W$ are independent Brownian motions. The invariant measure of the fast process $Y$ is the Gaussian measure given by $\mu_{2,x}(dy) = (2\pi)^{-1/2} \exp(-y^2 / 2) \,dy$. This system can be rewritten in terms of \eqref{E:generalSDE} with $\delta = \varepsilon$. In this case, the recurrence constant $r$ from  Condition \ref{C:ergodic} is $r=1$ and the restrictions on $q_{b}$, $q_{\sigma}$ from Condition \ref{C:Tightness} take the much simpler form $q_{b}\leq 1/2$ and $2q_{\sigma}+q_{b}\leq 1$. Notice that the limit $\bar{X}_t = \lim_{\varepsilon \downarrow 0} X_t^\varepsilon$ is given by
\begin{equation*}
  d \bar{X}_t = \bar{\lambda}_2(\bar{X}_t) \,dt, \ \bar{X}_0 = x_0, \text{ where }
  \bar{\lambda}_2(x) = \frac{1}{\sqrt{2\pi}} \int_\mathbb{R} b(x, y) e^{-y^2 / 2} \,dy .
\end{equation*}
In this case $\Phi_{2}(x, y)$, i.e.\ the solution to the PDE \eqref{E:Phi}, takes the explicit form
\begin{equation*}
  \frac{\partial \Phi_2}{\partial y}(x, y) = -2 e^{y^2/2} \int_{-\infty}^y b(x, z) e^{-z^2/2} \,dz
\end{equation*}
which then implies that the action functional $S(\xi)$ of Theorem \ref{T:main} is defined with
\begin{align*}
  \kappa(x, \eta) &= \frac{1}{\sqrt{2\pi}} \eta \int_\mathbb{R} \frac{\partial b}{\partial x}(x, y) e^{-y^2 / 2} \,dy \\
  q(x) &= \int_\mathbb{R} \left[ \sigma(x, y)^2 + 4 e^{y^2} \left(  \int_{-\infty}^y b(x, z) e^{-z^2/2} \,dz \right)^2 \right] \,\mu_{2,x}(dy).
\end{align*}

\begin{remark}
\cite{G03} presents a similar example under the additional assumption that $\int_\mathbb{R} b(x_0, y) \mu_{2,x}(dy) = 0$. By Theorem 1, this assumption is not necessary, and the results here extend the results of \cite{G03} to a much more general class of processes in a unified way.
\end{remark}

\subsection{Example 2}
In the second example, we consider the first order Langevin equation under Regime 1,
\begin{equation*}
  dX_t^\varepsilon = \left[ - \frac{\varepsilon}{\delta} \nabla Q \left( \frac{X_t^\varepsilon}{\delta} \right) - \nabla V (X_t^\varepsilon) \right] \,dt + \sqrt{\varepsilon} \sqrt{2D} \,dW_t, \quad X_0^\varepsilon = x_0.
\end{equation*}
This equation has a number of applications and has been studied extensively, beginning with \cite{Z88}, see also \cite{DSW12}. In our notation, let $Y_t^\varepsilon = X_t^\varepsilon / \delta$, $b(x, y) = f(x, y) = - \nabla Q(y)$, and $c(x, y) = g(x, y) = - \nabla V(x)$. The invariant density $\mu(y)$ is the Gibbs measure
\begin{equation*}
  \mu(y) = \frac{1}{Z} e^{-Q(y) / D}, \quad Z = \int_\mathcal{Y} e^{-Q(y) / D} \,dy.
\end{equation*}
In order to have closed form formulas, let us also assume that $Q(y_1, y_2, \dots, y_d) = Q_1(y_1) + Q_2(y_2) + \dots + Q_d(y_d)$ and that $\mathcal{Y}$ is the $d$-dimensional unit torus. Since the fast motion is restricted to be on a torus, the recurrence condition (part (ii)) of Condition \ref{C:ergodic} and Condition \ref{C:Tightness} are not needed.

Then $\bar{X}_t = \lim_{\varepsilon \downarrow 0} X_t^\varepsilon$ is given by
\begin{align*}
  \bar{X}_t &= x_0 + \int_0^t \bar{\lambda}_1(\bar{X}_s) \,ds \\
\intertext{where}
  \bar{\lambda}_1(x) &= - \bar{\Theta} \nabla V(x), \quad \bar{\Theta} = \text{diag} \left[ \frac{1}{Z_1 \hat{Z}_1}, \dots , \frac{1}{Z_d \hat{Z}_d} \right] \\
    \intertext{and for $i = 1, 2, \dots, d$}
    Z_i &= \int_\mathbb{T} e^{-Q_i(y_i) / D} \,dy_i, \quad \hat{Z}_i = \int_\mathbb{T} e^{Q_i(y_i) / D} \,dy_i.
\end{align*}

$\Phi_1(x, y)$ is given by
\begin{align*}
  \nabla_y \Phi(x, y) &= \frac{1}{D} \Theta(y) \nabla V(x) \\
  \intertext{where}
  \Theta(y) &= \text{diag} \left[ \frac{e^{Q_i (y_i) / D}}{\hat{Z}_i} \left( y_i - \frac{1}{Z_i} \int_0^{y_i} e^{-Q_i(\xi)/D} \,d\xi \right. \right. \\
  &\left. \left .+ \frac{1}{\hat{Z}_i} \int_0^1 e^{Q_i(\rho)/D} \int_0^\rho \left( \frac{1}{Z_i} e^{-Q_i(\xi)/D} - 1 \right) \,d\xi \,d\rho \right) \right] .
\end{align*}

Then the action functional $S(\xi)$ of Theorem \ref{T:main} is defined with
\begin{align*}
  \kappa(x, \eta) &= -\bar{\Theta} \nabla \nabla V(x) \eta - j_1 \int_\mathcal{Y} \left( \frac{1}{D} \Theta(y) \nabla V(x) \right) \nabla V(x) \,\mu(dy) \\
  q(x) &= 2D \bar{\Theta}.
\end{align*}

\begin{remark} We remark that this example is not covered by previous results in the literature on moderate deviations. Here we are able to get a very explicit form for the action functional.
\end{remark}

\subsection{Example 3}
To illustrate the case where $Y_t^\varepsilon$ is a CIR (square-root) process, consider the following model:
\begin{align*}
dX_t^\varepsilon &= c(X_t^\varepsilon, Y_t^\varepsilon) \,dt + \sqrt{\varepsilon} \sigma(X_t^\varepsilon, Y_t^\varepsilon) \,dW_t, && X_0^\varepsilon = x_0 \in \mathbb{R}\\
dY_t^\varepsilon &= \frac{\varepsilon}{\delta^2} a(b - Y_t^\varepsilon) \,dt + \frac{\sqrt{\varepsilon}}{\delta} \tau \sqrt{Y_t^\varepsilon} \,dW_t, && Y_0^\varepsilon = y_0 \in \mathbb{R}
\end{align*}
where $a$, $b$, and $\tau$ are positive constants satisfying $2ab \ge \tau^2$. Note that this model does not satisfy Condition \ref{C:ergodic} because the fast process noise is degenerate at $y = 0$. However, if $y_0 > 0$ and $2ab \ge \tau^2$ then $Y_t^\varepsilon > 0$ for all $t > 0$ w.p.1., $Y_t^\varepsilon$ has the gamma distribution as its unique invariant measure and so the results are expected to hold. For this model, the invariant measure and the limiting process $\bar{X}_t$ do not depend on the regime. However, as we shall see the MDP for Regimes 1 and 2 do differ. The fast process has the gamma invariant density
\begin{equation*}
  m(y) = \frac{(2a/\tau^2)^{2ab/\tau^2}}{\Gamma(2ab/\tau^2)} y^{2ab/\tau^2 - 1} e^{-2ay/ \tau^2}.
\end{equation*}

Then $\bar{X}_t = \lim_{\varepsilon \downarrow 0} X_t^\varepsilon$ satisfies the ordinary differential equation
\begin{align*}
  \bar{X}_t &= x_0 + \int_0^t \bar{\lambda}(\bar{X}_s) \,ds \\
  \intertext{where}
  \bar{\lambda}(x) &= \int_0^\infty c(x, y) m(y) \,dy.
\end{align*}

Under Regime 1, the action functional $S(\xi)$ of Theorem \ref{T:main} is expected to be defined with
\begin{align*}
  \kappa(x, \eta) &= \eta \frac{d}{dx} \bar{\lambda}(x) \\
  q(x) &= \int_0^\infty \sigma^2(x,y) m(y) \,dy.
\end{align*}

In contrast, under Regime 2, we let $\Phi_{2}(x,y)$ be the unique solution to (\ref{E:Phi}) with $i=2$ and $\lambda_{2}(x,y)=c(x,y)$. Then we have that
\begin{align*}
  \kappa(x, \eta) &= \eta \frac{d}{dx} \bar{\lambda}(x) \\
  q(x) &= \int_0^\infty \left( \sigma(x, y) +  \tau \sqrt{y}\frac{d \Phi_2}{dy}(x, y) \right)^2 m(y) \,dy.
\end{align*}

Hence, the two MDP's differ on the formula for $q(x)$. Again, we remark that this example can be covered with the results of this paper, but it is not clear whether existing previous results in the literature can address it or indicate how the action functional should look like.

\section{The controlled processes} \label{S:controlledProcess}

The proof of the Laplace principle \eqref{E:LaplacePrinciple} is based on a stochastic control representation given by Theorem 3.1 in \cite{BD98}. This theorem is restated here for the convenience of the reader.
\begin{theorem} \label{T:Boue}
Let $Z$ be a $2m$-dimensional Brownian motion with respect to the filtration $\{\mathscr{F}_t\}$ for $0 \le t \le 1$. Let $\mathcal{A}$ be the space of $\mathscr{F}_t$-progressively measurable $2m$-dimensional processes $v = (v_1, v_2)$ for $0 \le t \le 1$ satisfying
\begin{equation*}
  \E \int_0^1 \lvert v(s) \rvert^2 \,ds < \infty.
\end{equation*}
Let $F$ be a bounded, measurable, real--valued function defined on the space of $\mathbb{R}^{2m}$--valued continuous functions on $[0,1]$. Then
\begin{equation*}
  - \log \E \left[ \exp \{ - F(Z(\cdot))\} \right] = \inf_{v \in \mathcal{A}} \E \left[ \frac{1}{2} \int_0^1 \lvert v(s) \rvert^2 \,ds + F\left( Z(\cdot) + \int_0^\cdot v(s) \,ds \right) \right].
\end{equation*}
\end{theorem}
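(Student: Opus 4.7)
The plan is to establish each of the two inequalities comprising the variational identity using the Donsker--Varadhan dual representation
\[
-\log \E_P[e^{-F}] = \inf_{Q\ll P}\bigl\{\E_Q[F] + R(Q\|P)\bigr\},\qquad R(Q\|P):=\E_Q[\log(dQ/dP)],
\]
together with a Girsanov--martingale-representation correspondence between measures $Q\ll P$ of finite relative entropy and progressively measurable drifts $v\in\mathcal A$.

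For the upper bound, fix $v\in\mathcal A$ (first under Novikov's condition; the general case follows by truncation $v \to v\mathbf 1_{|v|\le N}$ with $N\to\infty$). Define the Girsanov density $dQ^v/dP := \exp\bigl(\int_0^1 v\cdot dZ - \tfrac12\int_0^1 |v|^2\, ds\bigr)$. Since under $Q^v$ the shifted process $\tilde Z_t := Z_t - \int_0^t v\, ds$ is a Brownian motion whose $Q^v$-law equals the $P$-law of $Z$, relabeling gives the Cameron--Martin--Girsanov identity
\[
\E_P\bigl[e^{-F(Z)}\bigr] = \E_P\Bigl[\exp\Bigl(-F\Bigl(Z + \int_0^\cdot v\,ds\Bigr) - \int_0^1 v\cdot dZ - \tfrac12 \int_0^1|v|^2\, ds\Bigr)\Bigr].
\]
Applying $-\log$, Jensen's inequality ($-\log\E e^{-X}\le \E X$), and killing the zero-mean stochastic integral by a standard localization yields
\[
-\log \E_P\bigl[e^{-F(Z)}\bigr] \le \E_P\Bigl[F\Bigl(Z + \int_0^\cdot v\,ds\Bigr) + \tfrac12 \int_0^1|v|^2\, ds\Bigr];
\]
taking the infimum over $v\in\mathcal A$ gives the $\le$ direction.

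For the lower bound, the Donsker--Varadhan infimum is attained at the tilted measure $dQ^\star/dP = e^{-F}/\E_P[e^{-F}]$. Boundedness of $F$ forces the martingale $M^\star_t := \E_P[e^{-F}\mid \mathcal F_t]$ to be uniformly bounded away from $0$ and $\infty$. The martingale representation theorem on the Brownian filtration produces a progressively measurable $u$ with $dM^\star_t = M^\star_t\, u_t \cdot dZ_t$, and It\^o applied to $\log M^\star$ combined with the two-sided bounds gives $\E_P\int_0^1|u|^2\, ds < \infty$, so $u\in\mathcal A$. A pathwise re-parametrization of $u$ through the $Q^\star$-Brownian motion $\tilde Z_t := Z_t - \int_0^t u\,ds$ then produces an admissible control $\bar v \in\mathcal A$ satisfying
\[
\E_P\Bigl[F\Bigl(Z+\int_0^\cdot \bar v\, ds\Bigr) + \tfrac12 \int_0^1|\bar v|^2\, ds\Bigr] = \E_{Q^\star}[F(Z)] + R(Q^\star\|P) = -\log\E_P[e^{-F}],
\]
which matches the left-hand side and attains the infimum in the Boué--Dupuis formulation, delivering $\ge$.

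The main obstacle is the lower bound: converting the abstract entropy-minimizing measure $Q^\star$ into an admissible control $\bar v \in \mathcal A$ with the required $L^2$-integrability on the original probability space. Boundedness of $F$ is essential, upgrading the generic MRT output ($\int|u|^2<\infty$ almost surely) to $\E_P\int|u|^2\,ds<\infty$ via uniform bounds on $\log M^\star$. A subsidiary but non-trivial point is the pathwise re-parametrization $u \leftrightarrow \bar v$, which tacitly invokes weak well-posedness of the associated drifted SDE so that the laws transport correctly between $P$ and $Q^\star$; for $v$ that depend on auxiliary randomness beyond $\sigma(Z)$ (as the theorem permits), one enlarges the canonical probability space accordingly throughout.
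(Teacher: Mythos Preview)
The paper does not prove this theorem; it is quoted verbatim as Theorem~3.1 of Bou\'e--Dupuis \cite{BD98} and used as a black box. So there is no proof in the paper to compare against.

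Your sketch follows the standard architecture of the Bou\'e--Dupuis argument: Girsanov plus Jensen for the easy inequality, and Donsker--Varadhan plus martingale representation for the hard one. The upper bound is fine. For the lower bound, you correctly identify the optimal measure $Q^\star$ and extract a drift $u$ via martingale representation with $\E_P\int_0^1|u|^2\,ds<\infty$, using boundedness of $F$ to control $\log M^\star$. That part is solid.

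The real gap is the step you label ``pathwise re-parametrization of $u$ through the $Q^\star$-Brownian motion $\tilde Z$.'' What you need is an admissible control $\bar v$ on the \emph{original} space $(\Omega,\mathscr F,P)$ such that the $P$-law of $\bigl(Z+\int_0^\cdot\bar v\,ds,\bar v\bigr)$ coincides with the $Q^\star$-law of $(Z,u)$. Invoking ``weak well-posedness of the associated drifted SDE'' does not give this: weak uniqueness matches laws, but it does not by itself produce a progressively measurable functional $\bar v=\psi(Z,\cdot)$ on the original filtered space, since $u$ is adapted to $\mathscr F_t$ (the filtration of $Z$), not a priori to the filtration of $\tilde Z$. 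In \cite{BD98} this is exactly the delicate point, and it is handled not by a one-line transfer but by first establishing the identity for a dense class of bounded $F$ (where a nearly optimal control can be constructed explicitly, e.g.\ via conditioning and discretization), and then passing to general bounded measurable $F$ by approximation and lower semicontinuity of the cost. Your outline would be complete if you either reproduced that approximation scheme or supplied a genuine measurable-selection argument turning $u$ into a functional of $\tilde Z$ plus independent randomness; as written, the sentence ``a pathwise re-parametrization \dots\ then produces an admissible control $\bar v\in\mathcal A$'' is asserting precisely the conclusion that requires work.
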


In our case we set $Z(\cdot) = (W(\cdot), B(\cdot))$ and each one of $v_{1},v_{2}$ are $m-$dimensional vectors. Under Condition \ref{C:StrongSolution}, for each $\varepsilon > 0$, \eqref{E:generalSDE} has a unique strong solution. Therefore $\eta^\varepsilon$ is a measurable function of $Z$. Set $F(Z(\cdot)) = h^2(\varepsilon) a(\eta^\varepsilon(\cdot))$. Set $u_{i}^\varepsilon = v_{i}/h(\varepsilon)$, $u^{\varepsilon}=(u_{1}^\varepsilon,u_{2}^\varepsilon)$, and then divide by $h^2(\varepsilon)$ to obtain
\begin{equation} \label{E:controlRepresentation}
  - \frac{1}{h^2(\varepsilon)} \log \E \left[ \exp\{-h^2(\varepsilon)a(\eta^\varepsilon)\}\right] = \inf_{u^\varepsilon \in \mathcal{A}} \E \left[ \frac{1}{2} \int_0^1 \lvert u^\varepsilon(s) \rvert^2 \,ds + a(\eta^{\varepsilon,u^\varepsilon}) \right]
\end{equation}
where the controlled deviations process $\eta^{\varepsilon,u^\varepsilon}$ is defined by
\begin{equation} \label{E:controlledDeviations}
  \eta_t^{\varepsilon, u^\varepsilon} = \frac{1}{\sqrt{\varepsilon} h(\varepsilon)} \left( X_t^{\varepsilon, u^\varepsilon} - \bar{X}_t \right)
\end{equation}
and the controlled processes $X_t^{\varepsilon, u^\varepsilon}$ and $Y_t^{\varepsilon, u^\varepsilon}$ are defined by
\begin{align} \label{E:controlledSDE}
  dX_t^{\varepsilon, u^\varepsilon} &=  \left[ \frac{\varepsilon}{\delta} b(X_t^{\varepsilon, u^\varepsilon}, Y_t^{\varepsilon, u^\varepsilon}) + c(X_t^{\varepsilon, u^\varepsilon}, Y_t^{\varepsilon, u^\varepsilon}) +  \sqrt{\varepsilon} h(\varepsilon) \sigma(X_t^{\varepsilon, u^\varepsilon}, Y_t^{\varepsilon, u^\varepsilon}) u_1^\varepsilon(t) \right]  \,dt \\
  &\quad + \sqrt{\varepsilon} \sigma(X_t^{\varepsilon, u^\varepsilon}, Y_t^{\varepsilon, u^\varepsilon}) \,dW_t  \notag \\
  dY_t^{\varepsilon, u^\varepsilon} &= \frac{1}{\delta} \left[ \frac{\varepsilon}{\delta} f(X_t^{\varepsilon, u^\varepsilon}, Y_t^{\varepsilon, u^\varepsilon}) + g(X_t^{\varepsilon, u^\varepsilon}, Y_t^{\varepsilon, u^\varepsilon})  +  \sqrt{\varepsilon} h(\varepsilon) \tau_1(X_t^{\varepsilon, u^\varepsilon}, Y_t^{\varepsilon, u^\varepsilon}) u_1^\varepsilon(t) \right. \notag \\
  &\quad + \left. \sqrt{\varepsilon} h(\varepsilon) \tau_2(X_t^{\varepsilon, u^\varepsilon}, Y_t^{\varepsilon, u^\varepsilon}) u_2^\varepsilon(t) \right] \,dt \notag \\
  &\quad + \frac{\sqrt{\varepsilon}}{\delta} \left[ \tau_1(X_t^{\varepsilon, u^\varepsilon}, Y_t^{\varepsilon, u^\varepsilon}) \,dW_t + \tau_2(X_t^{\varepsilon, u^\varepsilon}, Y_t^{\varepsilon, u^\varepsilon}) \,dB_t \right]  \notag \\
  & X_0^{\varepsilon, u^\varepsilon} = x_0, \quad Y_0^{\varepsilon, u^\varepsilon} = y_0. \notag
\end{align}

Note that we can rewrite $\eta^{\varepsilon, u^\varepsilon}$ in the form
\begin{align}\label{E:deviationsIntegral}
  \eta_t^{\varepsilon, u^\varepsilon} &=   \int_0^t \frac{1}{\sqrt{\varepsilon} h(\varepsilon)} \left[ \frac{\varepsilon}{\delta} b(X_s^{\varepsilon, u^\varepsilon}, Y_s^{\varepsilon, u^\varepsilon}) + c(X_s^{\varepsilon, u^\varepsilon}, Y_s^{\varepsilon, u^\varepsilon}) - \bar{\lambda}_i(\bar{X}_s)\right] \,ds \\
  &\qquad + \int_0^t \sigma(X_s^{\varepsilon, u^\varepsilon}, Y_s^{\varepsilon, u^\varepsilon}) u_1^\varepsilon(s)\, ds + \int_0^t \frac{1}{h(\varepsilon)} \sigma(X_s^{\varepsilon, u^\varepsilon}, Y_s^{\varepsilon, u^\varepsilon}) \,dW_s. \notag
\end{align}

Define $\mathcal{Z} = \mathbb{R}^m$. This is the space in which the control processes $u_1^\varepsilon$ and $u_2^\varepsilon$ take values. Define $\theta_i(x, \eta, y, z_1, z_2) \colon \mathbb{R}^n \times \mathbb{R}^n \times \mathcal{Y} \times \mathcal{Z} \times \mathcal{Z} \to \mathbb{R}^n$ by
\begin{align} \label{E:theta}
& \theta_1(x, \eta, y, z_1, z_2) = \big( \nabla_y \chi(x, y) \big) (\tau_1(x, y) z_1 + \tau_2(x, y) z_2) + j_1 \big( \nabla_y \Phi_1(x, y) \big) g(x, y) \\
  &\hspace{3cm} + \big( \nabla_x \bar{\lambda}_1(x) \big) \eta + \sigma(x, y) z_1
 \notag \\
&  \theta_2(x, \eta, y, z_1, z_2) = j_2 b(x, y) + \big( \nabla_y \Phi_2(x, y) \big) [\tau_1(x, y) z_1 + \tau_2(x, y) z_2 ] + \big( \nabla_x \bar{\lambda}_2(x) \big) \eta \notag \\
  \notag &\quad+ \sigma(x, y) z_1 + j_2 \big( \nabla_y \Phi_2(x, y) \big) f(x, y) + \frac{j_2}{2} \left( \left( \tau_1 \tau_1^\T + \tau_2 \tau_2^\T \right) (x, y) : \nabla_y \nabla_y \Phi_2(x, y) \right)
\end{align}

Conditions \ref{C:growth}, \ref{C:Tightness} and Theorem \ref{T:regularity} guarantee  that the functions $\theta_{1}$ and $\theta_{2}$ are bounded in $x$, affine in $\eta,z_{1}$ and $z_{2}$ and bounded polynomially in $|y|$ with order $r\geq 0$ ($r$ comes from Condition \ref{C:ergodic}).

Next we introduce the occupation measure $P^{\varepsilon, \Delta}$. Let $\Delta = \Delta(\varepsilon) \downarrow 0$ as $\varepsilon \downarrow 0$, whose role is to exploit a time-scale separation. Let $A_1$, $A_2$, $B$, and $\Gamma$ be Borel sets of $\mathcal{Z} = \mathbb{R}^m$, $\mathcal{Z}$, $\mathcal{Y} = \mathbb{R}^d$, and $[0,1]$ respectively. Let $(X^{\varepsilon, u^\varepsilon}, Y^{\varepsilon, u^\varepsilon})$ solve \eqref{E:controlledSDE}. Associate with $(X^{\varepsilon, u^\varepsilon}, Y^{\varepsilon, u^\varepsilon})$ and $u^\varepsilon$ a family of occupation measures $P^{\varepsilon, \Delta}$ defined by
\begin{equation*} \label{E:occupation}
  P^{\varepsilon, \Delta}(A_1 \times A_2 \times B \times \Gamma) = \int_\Gamma \left[ \frac{1}{\Delta} \int_t^{t+\Delta} 1_{A_1}(u_1^\varepsilon(s)) 1_{A_2}(u_2^\varepsilon(s)) 1_B(Y_s^{\varepsilon, u^\varepsilon}) \,ds \right] \,dt
\end{equation*}
and assume $u_i^\varepsilon(s) = 0$ if $s > 1$.

\begin{definition} \label{D:ViablePair}
Let $\theta(x, \eta, y, z_1, z_2) \colon \mathbb{R}^n \times \mathbb{R}^n \times \mathcal{Y} \times \mathcal{Z} \times \mathcal{Z} \to \mathbb{R}^n$ be a function that has at most polynomial growth in $|y|$ with order $r\geq 0$. For each $x\in\mathbb{R}^{n}$, let $\opL_{x}$ be a second order elliptic partial differential operator and denote by $\mathcal{D}(\opL_{x})$ its domain of definition. A pair $(\psi, P) \in \mathcal{C}([0,1]; \mathbb{R}^n) \times \mathcal{P}(\mathcal{Z} \times \mathcal{Z} \times \mathcal{Y} \times [0,1])$ is called a viable pair with respect to $(\theta, \opL_{x})$ if
\begin{itemize}
\item The function $\psi$ is absolutely continuous.
\item The measure $P$ is integrable in the sense that
\begin{equation*}
  \int_{\mathcal{Z} \times \mathcal{Z} \times \mathcal{Y} \times [0,1]} \left[ \lvert z_1 \rvert^2 + \lvert z_2 \rvert^2 + \lvert y \rvert^{2r}\right] P(dz_1 \,dz_2 \,dy \,ds) < \infty.
\end{equation*}
\item For all $t \in [0,1]$,
\begin{equation}\label{E:viableSDE}
  \psi_t = \int_{\mathcal{Z} \times \mathcal{Z} \times \mathcal{Y} \times [0,t]} \theta(\bar{X}_s, \psi_s, y, z_1, z_2) \,P(dz_1 \,dz_2 \,dy \,ds).
\end{equation}
\item For all $t \in [0, 1]$ and for every $F \in \mathcal{D}(\opL_{x})$,
\begin{equation}\label{E:viableCenter}
  \int_0^t \int_{\mathcal{Z} \times \mathcal{Z} \times \mathcal{Y}} \opL_{\bar{X}_s} F(y) \,P(dz_1 \,dz_2 \,dy \,ds) = 0.
\end{equation}
\item For all $t \in [0, 1]$,
\begin{equation}\label{E:viableLebesgue}
  P(\mathcal{Z} \times \mathcal{Z} \times \mathcal{Y} \times [0,t]) = t.
\end{equation}
\end{itemize}
We  write $(\psi, P)\in\mathcal{V}(\theta, \opL_{x})$.
\end{definition}

Note that the last item is equivalent to stating that the last marginal of $P$ is Lebesgue measure, or that $P$ can be decomposed as $P(dz_1 \,dz_2 \,dy \,dt) = P_t(dz_1\, dz_2\, dy) \,dt$.  In comparison to the definition of viable pairs in the large deviations case (for example, \cite{DS12}), $\psi$ does not appear in \eqref{E:viableCenter}, and so $\psi$ and $P$ are decoupled. Another difference with the definition of viable pair in \cite{DS12} is that here we need to impose the  condition
$\int_{\mathcal{Z} \times \mathcal{Z} \times \mathcal{Y} \times [0,1]}  \lvert y \rvert^{2r} P(dz_1 \,dz_2 \,dy \,ds) < \infty$ which is due to the polynomial growth in $|y|$ of the involved functions. As we will see in the convergence proof, due to the a priori bound of Lemma \ref{L:Ygrowth}, this is a restriction that is satisfied.

The controlled process \eqref{E:controlledDeviations} and definition of viable pairs will be used to prove the following theorem:

\begin{theorem} \label{T:control}
  Let Conditions \ref{C:growth}, \ref{C:ergodic}, \ref{C:Tightness} and \ref{C:StrongSolution} be satisfied. Additionally, under Regime 1, let Condition \ref{C:center} be satisfied. Then under Regime $i$, $i = 1,2$, the family of processes $\{X^{\varepsilon},\varepsilon>0\}$ from \eqref{E:generalSDE} satisfies the MDP, with the action functional $S(\xi)=S_{i}(\xi)$ given by
\begin{equation*}
  S_{i}(\xi) = \inf_{(\xi,P) \in \mathcal{V}(\theta_i, \opL_{i,x})} \left[ \frac{1}{2} \int_{\mathcal{Z} \times \mathcal{Z} \times \mathcal{Y} \times [0, 1]} \left[ \lvert z_1 \rvert^2 + \lvert z_2 \rvert^2 \right] P(dz_1 \, dz_2 \, dy \, ds) \right]
\end{equation*}
with the convention that the infimum over the empty set is $\infty$.
\end{theorem}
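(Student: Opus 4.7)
The plan is to derive the MDP via the weak-convergence / stochastic-control route of \cite{DE97, BD98}. Starting from the Bou\'e--Dupuis representation \eqref{E:controlRepresentation}, it suffices to prove matching Laplace lower and upper bounds on the controlled cost, the bound on either side being the infimum over viable pairs of $\frac{1}{2}\int(|z_{1}|^{2}+|z_{2}|^{2})\,dP + a(\psi)$. The essential preliminary step is to rewrite $\eta^{\varepsilon, u^\varepsilon}_{t}$ from \eqref{E:deviationsIntegral} in a form that is manifestly $\int \theta_{i}(\bar{X}_{s}, \eta^{\varepsilon, u^\varepsilon}_{s}, y, z_{1}, z_{2})\,dP^{\varepsilon, \Delta}$ plus an error that vanishes in $L^{1}$. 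In Regime 1 the dominant-but-centered drift $\frac{\varepsilon/\delta}{\sqrt{\varepsilon} h(\varepsilon)}\, b$ is tamed by applying It\^o's formula to $\chi(X^{\varepsilon, u^\varepsilon}_{t}, Y^{\varepsilon, u^\varepsilon}_{t})$, where $\chi$ is the corrector of \eqref{E:cell}: Condition \ref{C:center} is exactly what makes $\chi$ exist, and $\opL_{1, x}\chi = -b$ reduces the divergent term to $O(1)$ contributions from $\nabla_{y}\chi\cdot g$, $\nabla_{y}\chi(\tau_{1}\,dW + \tau_{2}\,dB)$, and lower order remainders. A parallel application of It\^o to $\Phi_{i}$ on the centered integrand $\lambda_{i}-\bar{\lambda}_{i}$, together with a Taylor expansion of $\lambda_{i}(X^{\varepsilon, u^\varepsilon}, y) - \lambda_{i}(\bar{X}, y)$ in $X - \bar{X} = \sqrt{\varepsilon} h(\varepsilon)\eta^{\varepsilon, u^\varepsilon}$, produces the $(\nabla_{x}\bar{\lambda}_{i})\eta$ piece of $\theta_{i}$; the specific constants $j_{1}, j_{2}$ arise as limits of the scaling prefactors on the corrector terms.

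For the Laplace lower bound, take near-minimizing controls $u^\varepsilon$ whose cost is WLOG bounded by some constant. The a priori moment estimate $\sup_{\varepsilon}\E\int_{0}^{1}|Y^{\varepsilon, u^\varepsilon}_{s}|^{2r}\,ds < \infty$ (Lemma \ref{L:Ygrowth} in the Appendix) forces $P^{\varepsilon, \Delta}$ to be tight on the space of subprobability measures that carry polynomial moments, and, combined with the It\^o decomposition above and Aldous-type estimates, tightness of $\eta^{\varepsilon, u^\varepsilon}$ in $\mathcal{C}([0,1]; \mathbb{R}^{n})$. Any subsequential limit $(\psi, P)$ is a viable pair: \eqref{E:viableSDE} is inherited by passing to the limit in the representation of $\eta^{\varepsilon, u^\varepsilon}$ using continuity of $\theta_{i}$ and uniform integrability; \eqref{E:viableCenter} follows by applying It\^o to $F(Y^{\varepsilon, u^\varepsilon}_{t})$ for $F \in \mathcal{D}(\opL_{i,x})$, observing that the leading generator of $Y^{\varepsilon, u^\varepsilon}$ is $\frac{1}{\delta}\frac{\varepsilon}{\delta}\opL_{i, X}$, and extracting the time-scale factor $\delta^{2}/\varepsilon$; \eqref{E:viableLebesgue} is built into the definition of $P^{\varepsilon,\Delta}$. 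Lower semicontinuity of the quadratic cost plus Fatou gives the $\liminf$ inequality.

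For the Laplace upper bound, one reverses the construction. Given any viable pair $(\xi, P)$ with finite cost, uniqueness of the invariant measure for $\opL_{i, x}$ together with \eqref{E:viableCenter} forces the $y$-marginal of $P_{s}$ to coincide with $\mu_{i, \bar{X}_{s}}$; disintegrating $P$ as $\mu_{i, \bar{X}_{s}}(dy)\nu_{s}(dz_{1}dz_{2}\mid y)\,ds$ and mollifying in $(s, y)$ produces a smooth bounded feedback $v^{\rho}(s, y)$ whose conditional mean is close to $\int z\,\nu_{s}(dz\mid y)$. Setting $u^{\varepsilon}(s) = v^{\rho}(s, Y^{\varepsilon, u^{\varepsilon}}_{s})$, running the controlled system \eqref{E:controlledSDE}, and letting $\varepsilon \downarrow 0$ followed by $\rho \downarrow 0$ gives $(\eta^{\varepsilon, u^{\varepsilon}}, P^{\varepsilon, \Delta}) \to (\xi, P)$ in distribution with cost converging to $\tfrac{1}{2}\int(|z_{1}|^{2}+|z_{2}|^{2})\,dP$, along the lines of \cite{DSW12, S13}. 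Compactness of the level sets of $S_{i}$ follows from the same tightness arguments applied to any sequence $(\xi^{n}, P^{n})$ with bounded cost.

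The main obstacle, and the reason that Condition \ref{C:Tightness} and the appendices are essential, is the unboundedness of $\mathcal{Y} = \mathbb{R}^{d}$ combined with the polynomial growth of $\theta_{i}$ in $|y|$. One must prove uniform $L^{p}$-bounds on $Y^{\varepsilon, u^{\varepsilon}}$ in spite of the $\frac{\sqrt{\varepsilon} h(\varepsilon)}{\delta}(\tau_{1}u_{1} + \tau_{2}u_{2})$ perturbation from the control, and bound every remainder in the It\^o decomposition (which is a product of powers of $\sqrt{\varepsilon} h(\varepsilon)$, $\delta/\varepsilon$, and $|Y^{\varepsilon, u^{\varepsilon}}|^{k}$) against the recurrence exponent $r$ from Condition \ref{C:ergodic}; the arithmetic of Condition \ref{C:Tightness} is precisely what makes each such error $o(1)$ in expectation. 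Regime 2 is handled by identical arguments, with $\Phi_{2}$ playing the combined role of $\chi$ and $\Phi_{1}$ and with no need for Condition \ref{C:center}.
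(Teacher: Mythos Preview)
Your lower-bound argument, tightness, and identification of limits as viable pairs track the paper's Sections 5.1--5.3 closely: same It\^o expansions with $\chi$ and $\Phi_i$ (Lemmas \ref{L:coefs}--\ref{L:lambdabar}), same moment bound on $Y^{\varepsilon,u^\varepsilon}$ (Lemma \ref{L:Ygrowth}), same martingale-problem verification of \eqref{E:viableCenter}. One small slip: the $(\nabla_x\bar\lambda_i)\eta$ term comes from expanding $\bar\lambda_i(X^{\varepsilon,u^\varepsilon})-\bar\lambda_i(\bar X)$, not $\lambda_i(X^{\varepsilon,u^\varepsilon},y)-\lambda_i(\bar X,y)$, but that is cosmetic.

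Your upper bound takes a genuinely different route. You propose to start from an arbitrary near-optimal viable pair $(\xi,P)$, disintegrate $P$ as $\mu_{i,\bar X_s}(dy)\,\nu_s(dz\mid y)\,ds$, mollify the conditional mean to a smooth feedback $v^\rho(s,y)$, and feed that back into the controlled system. The paper instead first passes from the relaxed to the ordinary local rate function, then \emph{solves} the local minimization explicitly (Theorem \ref{T:controlRepresentation}): the optimal feedback is $v_k(y)=\alpha_k(x,y)^\T q^{-1}(x)(\dot\psi_t-\kappa(x,\eta))$, which depends on $(\bar X_t,\eta_t,Y_t)$ and is tailored to a target path $\psi$ rather than to a measure $P$. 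Both approaches are valid here because $\theta_i$ is affine in $(\eta,z_1,z_2)$ and the $y$-marginal is forced to be $\mu_{i,\bar X_s}$; the affineness in $\eta$ is what guarantees that your $\eta$-independent feedback still drives $\eta^{\varepsilon,u^\varepsilon}$ to $\xi$ (both satisfy the same linear ODE). Note, though, that your limiting occupation measure is not the original $P$ but its ``Jensen-ized'' version with $z$-conditional $\delta_{\bar v(s,y)}$; this only helps the upper bound, so the argument survives. The trade-off: the paper's explicit control simultaneously delivers the closed-form action functional of Theorem \ref{T:main}, whereas your generic construction proves Theorem \ref{T:control} in isolation and would require a separate argument to extract the formula.
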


Notice that Theorem \ref{T:control} offers a compact way to write the MDP for both regimes in terms of the appropriate viable pairs each time. As will be shown during the proof, Theorem \ref{T:main} follows directly from Theorem \ref{T:control}.

\section{Proof in Regime 1}

The proof is nearly identical for Regime 1 and for Regime 2, aside from some technical differences. In this section, we present the proof for Regime 1. In Section 6, we discuss the changes necessary for Regime 2. In Subsections \ref{S:Tightness} and \ref{SS:ExistenceViablePair} we prove tightness and convergence of the pair $(\eta^{\varepsilon, u^\varepsilon}, P^{\varepsilon, \Delta})$ respectively. In Subsection 5.3, we prove the Laplace principle lower bound. In Subsection 5.4, we prove compactness of level sets of $S(\cdot)$. Finally, in Subsection 5.5, we prove the Laplace principle upper bound and the representation formula of Theorem \ref{T:main}.

\subsection{Proof of tightness}\label{S:Tightness}
The main result of this section is the following proposition on tightness.
\begin{proposition}
\label{P:tightness}Let Conditions \ref{C:growth}, \ref{C:ergodic}, \ref{C:Tightness}, \ref{C:center} and \ref{C:StrongSolution} be satisfied.  Consider any family $\{u^{\varepsilon},\varepsilon>0\}$ of
controls in $\mathcal{A}$ satisfying for some $N<\infty$
\begin{equation}
\sup_{\epsilon>0}\int_{0}^{1}\left| u^{\varepsilon}(t)\right|
^{2}dt<N, \text{almost surely}\label{A:UniformlyAdmissibleControls}%
\end{equation}
Then the following hold.

\begin{enumerate}
\item {The family $\{(X^{\varepsilon,u^{\varepsilon}},\mathrm{P}^{\varepsilon,\Delta
}),\varepsilon>0\}$ is tight.}

\item {Define the set
\[
\mathcal{B}_{r,M}=\left\{(z_{1},z_{2},y)\in\mathcal{Z}\times\mathcal{Z}\times \mathcal{Y}:\left(|z_{1}|>M, |z_{2}| > M, |y|^{r}>M\right)\right\}.
\]

The family $\{\mathrm{P}^{\varepsilon,\Delta},\varepsilon>0\}$ is uniformly
integrable in the sense that
\[
\lim_{M\rightarrow\infty}\sup_{\epsilon>0}\mathbb{E}_{x_{0},y_{0}}\left[
\int_{\{(z_{1},z_{2},y)\in
\mathcal{B}_{r,M}\times
\lbrack0,1]}\left[\left| z_{1}\right| +\left| z_{2}\right|+\left| y\right|^{r} \right]\mathrm{P}^{\epsilon,\Delta}%
(dz_{1} \,dz_{2} \,dy \,dt)\right]  =0.
\]
}
\end{enumerate}
\end{proposition}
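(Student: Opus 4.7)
The strategy is to split the proposition into (i) tightness of $(X^{\varepsilon,u^\varepsilon},P^{\varepsilon,\Delta})$ in $\mathcal{C}([0,1];\mathbb{R}^n)\times\mathcal{P}(\mathcal{Z}\times\mathcal{Z}\times\mathcal{Y}\times[0,1])$ and (ii) the uniform integrability statement, and to base both parts on two pillars. The first pillar is uniform-in-$\varepsilon$ moment bounds on the controlled fast process $Y^{\varepsilon,u^\varepsilon}$: these are what Lemma \ref{L:Ygrowth} is meant to deliver, via a Lyapunov argument with $|y|^{2k}$-type test functions that uses the recurrence in Condition \ref{C:ergodic}(ii) to absorb the noise and Young's inequality together with \eqref{A:UniformlyAdmissibleControls} to absorb the controlled contribution $\sqrt{\varepsilon}h(\varepsilon)\tau_i u_i^{\varepsilon}$. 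This gives $\sup_{\varepsilon}\E\int_0^1|Y_s^{\varepsilon,u^\varepsilon}|^{2k}\,ds<\infty$ for the range of $k$ dictated by Condition \ref{C:Tightness}. The second pillar is the corrector $\chi$ from \eqref{E:cell}, which neutralizes the otherwise explosive drift $\tfrac{\varepsilon}{\delta}b$ in the $X$-equation (this is why Condition \ref{C:center} is essential in Regime 1).

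For tightness of $X^{\varepsilon,u^\varepsilon}$, I would apply It\^o's formula to $\delta\,\chi(X_t^{\varepsilon,u^\varepsilon},Y_t^{\varepsilon,u^\varepsilon})$. Using $\opL_{1,x}\chi=-b$, the $\tfrac{\varepsilon}{\delta}\opL_{1,x}\chi$ term produced by the fast variable exactly cancels the offending drift, and rearranging yields
\[
\int_s^t\tfrac{\varepsilon}{\delta}\,b(X_r^{\varepsilon,u^\varepsilon},Y_r^{\varepsilon,u^\varepsilon})\,dr=\delta\bigl[\chi(X_s^{\varepsilon,u^\varepsilon},Y_s^{\varepsilon,u^\varepsilon})-\chi(X_t^{\varepsilon,u^\varepsilon},Y_t^{\varepsilon,u^\varepsilon})\bigr]+\int_s^t R_r\,dr+M_t-M_s,
\]
where $R_r$ collects $(\nabla_y\chi)g$, the mixed $x$--$y$ It\^o corrections, and the controlled pieces $\sqrt{\varepsilon}h(\varepsilon)(\nabla_y\chi)(\tau_1 u_1^\varepsilon+\tau_2 u_2^\varepsilon)$, and $M$ is a square--integrable martingale. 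The polynomial-in-$|y|$ bounds on $\chi$ and its derivatives from Theorem \ref{T:regularity}, combined with Lemma \ref{L:Ygrowth} under Condition \ref{C:Tightness}, give $\E\int_0^1|R_r|^p\,dr+\E[M,M]_1^{p/2}\le C$ uniformly in $\varepsilon$ for some $p>2$. Together with standard estimates on the $c$-drift, the $\sqrt{\varepsilon}h(\varepsilon)\sigma u_1^\varepsilon$ term, and the $\sqrt{\varepsilon}\sigma\,dW$ martingale in \eqref{E:controlledSDE}, this yields the Kolmogorov estimate $\E|X_t^{\varepsilon,u^\varepsilon}-X_s^{\varepsilon,u^\varepsilon}|^p\le C|t-s|^{1+\beta}$, which gives tightness.

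For tightness of $P^{\varepsilon,\Delta}$, by Prohorov it suffices to verify tightness of each marginal. The $t$-marginal is Lebesgue up to a boundary of order $\Delta$, and by Fubini together with \eqref{A:UniformlyAdmissibleControls} and Lemma \ref{L:Ygrowth},
\[
\E\int|z_i|^2\,P^{\varepsilon,\Delta}\le N+O(\Delta),\qquad \E\int|y|^{2r}\,P^{\varepsilon,\Delta}\le\sup_{\varepsilon}\E\int_0^1|Y_s^{\varepsilon,u^\varepsilon}|^{2r}\,ds<\infty,
\]
so Markov's inequality produces tightness of the $(z_1,z_2,y)$-marginals. Joint tightness of $(X^{\varepsilon,u^\varepsilon},P^{\varepsilon,\Delta})$ follows. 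The uniform integrability statement in (ii) is a direct consequence: Cauchy--Schwarz against the same second-moment bounds gives
\[
\E\int_{\mathcal{B}_{r,M}\times[0,1]}\bigl(|z_1|+|z_2|+|y|^r\bigr)\,P^{\varepsilon,\Delta}\le C\,\bigl(\E\,P^{\varepsilon,\Delta}(\mathcal{B}_{r,M}\times[0,1])\bigr)^{1/2},
\]
and the right-hand side tends to $0$ as $M\to\infty$ uniformly in $\varepsilon$ by Markov.

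The main obstacle I anticipate is the bookkeeping in the It\^o expansion of $\delta\,\chi(X^{\varepsilon,u^\varepsilon},Y^{\varepsilon,u^\varepsilon})$: each term that appears -- $\nabla_y\chi\cdot g$, the $\nabla_x\chi$ contributions, the second-order corrections $(\tau_1\tau_1^\T+\tau_2\tau_2^\T):\nabla_y\nabla_y\chi$, and the controlled pieces $\sqrt{\varepsilon}h(\varepsilon)(\nabla_y\chi)\tau_i u_i^\varepsilon$ -- carries a polynomial-in-$|y|$ weight whose order is a specific combination of $q_b,q_c,q_\sigma$ and $r$. Every one of these must be $L^p$-integrable against the law of $Y^{\varepsilon,u^\varepsilon}$, and matching these orders against the moment range supplied by Lemma \ref{L:Ygrowth} is precisely what the intricate inequalities in Condition \ref{C:Tightness} encode. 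This matching is exactly where the unbounded $\mathcal{Y}$ setting becomes genuinely more delicate than the torus case.
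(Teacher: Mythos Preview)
Your treatment of $P^{\varepsilon,\Delta}$ is essentially the paper's argument: both show $\sup_\varepsilon \E\int(|z_1|^2+|z_2|^2+|y|^{2r})\,P^{\varepsilon,\Delta}<\infty$ via the control bound and Lemma~\ref{L:Ygrowth}, and derive uniform integrability from it (the paper uses the elementary $|z|\le |z|^2/M$ on $\{|z|>M\}$ rather than Cauchy--Schwarz, but this is cosmetic).

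For the path component there are two points worth flagging. First, the paper does not prove tightness of $X^{\varepsilon,u^\varepsilon}$ directly; it proves tightness of the rescaled process $\eta^{\varepsilon,u^\varepsilon}=(X^{\varepsilon,u^\varepsilon}-\bar X)/(\sqrt{\varepsilon}h(\varepsilon))$, which is strictly stronger and is what is actually used in Sections~\ref{SS:ExistenceViablePair}--5.3. This forces a finer decomposition than yours: the paper writes the drift as the three pieces in \eqref{E:lambdaSplit} and handles them with \emph{two} correctors, applying It\^o to $\chi$ (Lemma~\ref{L:coefs}) and to $\Phi_1$ (Lemma~\ref{L:lambda}), plus a Gronwall step for $\bar\lambda_1(X)-\bar\lambda_1(\bar X)$ (Lemma~\ref{L:lambdabar}). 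Your single corrector $\chi$ is the right idea for $X^{\varepsilon,u^\varepsilon}$ but would not suffice at the $\eta$-scale.

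Second, your Kolmogorov estimate $\E|X_t-X_s|^p\le C|t-s|^{1+\beta}$ has a gap as written: the boundary piece $\delta[\chi(X_s,Y_s)-\chi(X_t,Y_t)]$ carries no $|t-s|$ factor, and bounding $\E|\chi(X_t,Y_t)|^p$ requires a pointwise-in-$t$ moment of $Y_t^{\varepsilon,u^\varepsilon}$, whereas Lemma~\ref{L:Ygrowth} delivers only $\sup_\varepsilon\E\int_0^1|Y_s|^{2r}\,ds$. The paper sidesteps both issues by working with the modulus-of-continuity criterion \eqref{E:tightness} (limsup in $\varepsilon$ before $\rho\downarrow 0$), under which the $\delta$-prefactor kills the boundary term and only time-integrated $Y$-moments are needed. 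If you switch from Kolmogorov to that criterion, your outline for $X^{\varepsilon,u^\varepsilon}$ goes through.
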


The proof of Proposition \ref{P:tightness} is the subject of Sections \ref{SS:tightnessP} and \ref{SS:tightnessProcess}.
\subsubsection{Tightness of $\{ P^{\varepsilon, \Delta}, \varepsilon,\Delta>0 \}$ on $\mathcal{P}(\mathcal{Z} \times \mathcal{Z} \times \mathcal{Y} \times [0,1])$ } \label{SS:tightnessP}

The argument for tightness is similar to the argument for tightness in the proof of Theorem 3.2 in \cite{S13} (see also \cite{DS12}), but with some differences due to the unboundedness of the space on which the fast motion takes values. We repeat here for completeness the argument emphasizing the differences.

By Lemmas \ref{L:uBound} and \ref{L:Ygrowth} in the Appendix, we can restrict to a family $\{ u^\varepsilon = (u_1^\varepsilon, u_2^\varepsilon), \varepsilon > 0 \}$ of controls in $\mathcal{A}$ satisfying
\begin{equation*}
  \sup_{\varepsilon > 0} \E \int_0^1 \left[ \lvert u_1^\varepsilon(s) \rvert^2 + \lvert u_2^\varepsilon(s) \rvert^2 + \lvert Y^{\varepsilon,u^{\varepsilon}}_{s} \rvert^{2r}\right] \,ds < \infty.
\end{equation*}

Recall that a tightness function $\hat{g}(x)$ is a function mapping a space $\mathcal{X}$ to $\mathbb{R} \cup \{ \infty \}$ which has a lower bound and for which for each $M < \infty$, the level set $Z_{\hat{g}}(M) = \{ x \in \mathcal{X} \colon \hat{g}(x) \le M \}$ is relatively compact in $\mathcal{X}$.

Consider $q\in \mathcal{P}(\mathcal{Z} \times \mathcal{Z} \times \mathcal{Y} \times [0,1])$ (not to be confused with the growth parameters of Condition \ref{C:growth}). The function \[
\hat{g}(q) = \int_{\mathcal{Z} \times \mathcal{Z} \times \mathcal{Y} \times [0,1]} \left[ \lvert z_1 \rvert^2 + \lvert z_2 \rvert^2 + \lvert y \rvert^{2r}\right] q(dz_1\,dz_2\,dy\,dt)\]
is a tightness function on $\mathcal{P}(\mathcal{Z} \times \mathcal{Z} \times \mathcal{Y} \times [0,1])$ by the facts that it is nonnegative and that the level sets of $\hat{g}$ are relatively compact. Then by Theorem A.3.17 in \cite{DE97}, for each $M < \infty$, the set
\begin{equation*}
  Z_{\hat{g}}(M) = \left\{ \theta \in \mathcal{P}(\mathcal{P}(\mathcal{Z} \times \mathcal{Z} \times \mathcal{Y} \times [0, 1])) \colon \int_{\mathcal{P}(\mathcal{Z} \times \mathcal{Z} \times \mathcal{Y} \times [0, 1])} \hat{g}(q) \, \theta(dq) \le M \right\}
\end{equation*}
is tight. Tightness of $\{ P^{\varepsilon, \Delta}, \varepsilon, \Delta > 0 \}$ follows from the bound

\begin{align}
\sup_{\epsilon\in(0,1]}\E [ \hat{g}(P^{\varepsilon, \Delta}) ]   &  =\sup_{\epsilon\in(0,1]}\E \left[
\int_{\mathcal{Z}\times\mathcal{Z}\times\mathcal{Y}\times[0,1]}\left(\left| z_{1}\right|^{2}+\left| z_{2}\right|^{2}+\left| y\right|^{2r}\right)
\mathrm{P}^{\epsilon,\Delta}(dz_{1} \,dz_{2} \,dy \,dt)\right] \nonumber\\
&  =\sup_{\epsilon\in(0,1]}\E\int_{0}^{1}\frac{1}{\Delta}%
\int_{t}^{t+\Delta}\left[\left| u^{\varepsilon}_{1}(s)\right| ^{2}+\left| u^{\varepsilon}_{2}(s)\right| ^{2}+\left| Y^{\varepsilon,u^{\varepsilon}}_{s}\right| ^{2r}\right]ds \,dt\nonumber\\
&  <\infty.\nonumber
\end{align}

Lastly the uniform integrability statement of Proposition \ref{P:tightness} follows from the last display and the following observation
\begin{align}
&\E\left[  \int_{(z_{1},z_{2},y)\in\mathcal{B}_{r,M}\times [0,1]}\left(\left| z_{1}\right|+\left| z_{2}\right|+\left| y\right|^{r}\right) \mathrm{P}%
^{\epsilon,\Delta}(dz_{1} \,dz_{2} \,dy \,dt)\right] \nonumber\\
&\qquad \leq\frac{C}{M}\E\left[
\int_{\mathcal{Z}\times\mathcal{Z}\times\mathcal{Y}\times[0,1]}\left(\left| z_{1}\right|^{2}+\left| z_{2}\right|^{2}+\left| y\right|^{2r}\right)\mathrm{P}^{\epsilon,\Delta}(dz_{1} \,dz_{2} \,dy \,dt)\right],\nonumber
\end{align}
for some unimportant constant $C<\infty$.

\subsubsection{Tightness of $\{ \eta^{\varepsilon, u^\varepsilon},\varepsilon>0 \}$ on $\mathcal{C}([0,1];\mathbb{R}^{n})$}\label{SS:tightnessProcess}

Next, we prove tightness of the family $\{ \eta^{\varepsilon, u^\varepsilon} \}$. It is sufficient to prove that for every $\zeta > 0$
\begin{equation} \label{E:tightness}
  \lim_{\rho \downarrow 0} \limsup_{\varepsilon \downarrow 0} \p \left[ \sup_{\substack{0 \le t_1 < t_2 \le 1 \\ \lvert t_2 - t_1 \rvert < \rho}} \lvert \eta_{t_2}^{\varepsilon, u^\varepsilon} - \eta_{t_1}^{\varepsilon, u^\varepsilon} \rvert > \zeta \right] = 0 .
\end{equation}
This proof is the main source of additional complexity as compared to the large deviations case. The proof depends on several technical lemmas which are stated and proved in the Appendix.

From \eqref{E:deviationsIntegral}, we have
\begin{align*}
  \eta_{t_2}^{\varepsilon, u^\varepsilon} - \eta_{t_1}^{\varepsilon, u^\varepsilon} &= \int_{t_1}^{t_2} \frac{\frac{\varepsilon}{\delta}b(X_s^{\varepsilon, u^\varepsilon}, Y_s^{\varepsilon, u^\varepsilon}) + c(X_s^{\varepsilon, u^\varepsilon}, Y_s^{\varepsilon, u^\varepsilon}) - \bar{\lambda}_1(\bar{X}_s)}{\sqrt{\varepsilon} h(\varepsilon)} \,ds \\
  &+ \int_{t_1}^{t_2} \sigma(X_s^{\varepsilon, u^\varepsilon}, Y_s^{\varepsilon, u^\varepsilon}) u_1^\varepsilon(s) \,ds
  + \frac{1}{h(\varepsilon)} \int_{t_1}^{t_2} \sigma(X_{s}^{\varepsilon, u^\varepsilon}, Y_{s}^{\varepsilon, u^\varepsilon}) \,dW_{s}. \notag
\end{align*}

We can rewrite the first term in the form
\begin{align} \label{E:lambdaSplit}
  &\int_{t_1}^{t_2} \frac{\frac{\varepsilon}{\delta}b(X_s^{\varepsilon, u^\varepsilon}, Y_s^{\varepsilon, u^\varepsilon}) + c(X_s^{\varepsilon, u^\varepsilon}, Y_s^{\varepsilon, u^\varepsilon}) - \bar{\lambda}_1(\bar{X}_s)}{\sqrt{\varepsilon} h(\varepsilon)} ds \\
  \notag &\ = \int_{t_1}^{t_2} \frac{\frac{\varepsilon}{\delta}b(X_s^{\varepsilon, u^\varepsilon}, Y_s^{\varepsilon, u^\varepsilon}) + c(X_s^{\varepsilon, u^\varepsilon}, Y_s^{\varepsilon, u^\varepsilon}) - \lambda_1(X_s^{\varepsilon, u^\varepsilon}, Y_s^{\varepsilon, u^\varepsilon})}{\sqrt{\varepsilon} h(\varepsilon)} ds \\
  \notag &\ + \int_{t_1}^{t_2} \frac{\lambda_1(X_s^{\varepsilon, u^\varepsilon}, Y_s^{\varepsilon, u^\varepsilon}) - \bar{\lambda}_1(X_s^{\varepsilon, u^\varepsilon})}{\sqrt{\varepsilon} h(\varepsilon)} ds + \int_{t_1}^{t_2} \frac{\bar{\lambda}_1(X_s^{\varepsilon, u^\varepsilon}) - \bar{\lambda}_1(\bar{X}_s)}{\sqrt{\varepsilon} h(\varepsilon)} ds.
\end{align}

Then we have
\begin{align*} \label{E:Pdecomp}
  &\p \left[ \sup_{\substack{0 \le t_1 < t_2 \le 1 \\ \lvert t_2 - t_1 \rvert < \rho}} \lvert \eta_{t_2}^{\varepsilon, u^\varepsilon} - \eta_{t_1}^{\varepsilon, u^\varepsilon} \rvert > \zeta \right]  \\
  &\le \p \left[ \sup_{\substack{0 \le t_1 < t_2 \le 1 \\ \lvert t_2 - t_1 \rvert < \rho}} \left\lvert \int_{t_1}^{t_2} \frac{\frac{\varepsilon}{\delta}b(X_s^{\varepsilon, u^\varepsilon}, Y_s^{\varepsilon, u^\varepsilon}) + c(X_s^{\varepsilon, u^\varepsilon}, Y_s^{\varepsilon, u^\varepsilon}) - \lambda_1(X_s^{\varepsilon, u^\varepsilon}, Y_s^{\varepsilon, u^\varepsilon})}{\sqrt{\varepsilon} h(\varepsilon)} ds \right\rvert > \frac{\zeta}{5} \right] \notag \\
  &+ \p \left[ \sup_{\substack{0 \le t_1 < t_2 \le 1 \\ \lvert t_2 - t_1 \rvert < \rho}} \left\lvert \int_{t_1}^{t_2} \frac{\lambda_1(X_s^{\varepsilon, u^\varepsilon}, Y_s^{\varepsilon, u^\varepsilon}) - \bar{\lambda}_1(X_s^{\varepsilon, u^\varepsilon})}{\sqrt{\varepsilon} h(\varepsilon)} ds \right\rvert > \frac{\zeta}{5} \right] \notag \\
  &+ \p \left[ \sup_{\substack{0 \le t_1 < t_2 \le 1 \\ \lvert t_2 - t_1 \rvert < \rho}} \left\lvert \int_{t_1}^{t_2} \frac{\bar{\lambda}_1(X_s^{\varepsilon, u^\varepsilon}) - \bar{\lambda}_1(\bar{X}_s)}{\sqrt{\varepsilon} h(\varepsilon)} ds \right\rvert > \frac{\zeta}{5} \right] \notag \\
  &+ \p \left[ \sup_{\substack{0 \le t_1 < t_2 \le 1 \\ \lvert t_2 - t_1 \rvert < \rho}} \left\lvert \int_{t_1}^{t_2} \sigma(X_s^{\varepsilon, u^\varepsilon}, Y_s^{\varepsilon, u^\varepsilon}) u_1^\varepsilon(s) \,ds \right\rvert > \frac{\zeta}{5} \right]  \notag \\
  &+ \p \left[ \sup_{\substack{0 \le t_1 < t_2 \le 1 \\ \lvert t_2 - t_1 \rvert < \rho}} \left\lvert \frac{1}{h(\varepsilon)} \int_{t_1}^{t_2} \sigma(X_{s}^{\varepsilon, u^\varepsilon}, Y_{s}^{\varepsilon, u^\varepsilon}) \,dW_{s} \right\rvert > \frac{\zeta}{5} \right] \notag \\
  &= \sum_{i=1}^5 J_i^{\varepsilon, \rho}. \notag
\end{align*}

By Lemma \ref{L:coefs}, \ref{L:lambda}, \ref{L:lambdabar}, \ref{L:productBound} we have for $i=1$, $2$, $3$, $4$ respectively that
\begin{equation*}
  \lim_{\rho \downarrow 0} \limsup_{\varepsilon \downarrow 0} J_i^{\varepsilon, \rho} = 0 .
\end{equation*}
It remains to study the term $J_5^{\varepsilon, \rho}$. By the conditions on $\sigma$ and Lemma \ref{L:Ygrowth},
\begin{equation*}
  M_t = \int_0^t \sigma(X_{s}^{\varepsilon, u^\varepsilon}, Y_{s}^{\varepsilon, u^\varepsilon}) \,dW_{s}
\end{equation*}
is a local square integrable martingale with continuous paths. Then, using again Lemma \ref{L:Ygrowth}, we have for a constant $C<\infty$ that may change from line to line and  for $\nu>0$ small enough such that $q_{\sigma}(1+\nu)<r$, we have
\begin{align*}
  &\p \left[ \sup_{\substack{0 \le t_1 \le t \le t_{1}+\rho }} \left\lvert \int_{t_1}^{t} \sigma(X_{s}^{\varepsilon, u^\varepsilon}, Y_{s}^{\varepsilon, u^\varepsilon}) \,dW_{s} \right\rvert > h(\varepsilon) \frac{\zeta}{5} \right] \nonumber\\
  &\quad\leq C \left(h(\varepsilon)\zeta\right)^{-2(1+\nu)} \E \left[ \sup_{\substack{0\le t_1 \le t \le t_{1}+\rho }} \left\lvert \int_{t_1}^{t} \sigma(X_{s}^{\varepsilon, u^\varepsilon}, Y_{s}^{\varepsilon, u^\varepsilon}) \,dW_{s} \right\rvert^{2(1+\nu)}  \right]\nonumber\\
  &\quad\leq C \left(h(\varepsilon)\zeta\right)^{-2(1+\nu)}  \E \left[ \sup_{\substack{0 \le t_1 \le t \le t_{1}+\rho }} \left\lvert \int_{t_1}^{t} \left|\sigma(X_{s}^{\varepsilon, u^\varepsilon}, Y_{s}^{\varepsilon, u^\varepsilon})\right|^{2} \,ds \right\rvert^{(1+\nu)}  \right]
\end{align*}
from which the result follows by Lemma \ref{L:productBound}. 
With this, the proof of \eqref{E:tightness} is completed.

\subsection{Proof of existence of viable pair}\label{SS:ExistenceViablePair}

In the previous section, we have shown that the family of processes $\{ (\eta^{\varepsilon, u^\varepsilon}, P^{\varepsilon, \Delta}),\ \varepsilon > 0 \}$ is tight. It follows that for any subsequence of $\varepsilon$ converging to 0, there exists a subsubsequence of $(\eta^{\varepsilon, u^\varepsilon}, P^{\varepsilon, \Delta})$ which is convergent in distribution to some limit $(\bar{\eta}, \bar{P})$. The goal of this section is to show that $(\bar{\eta}, \bar{P})$ is a viable pair with respect to $(\theta_{1},\opL_{1,x})$ according to Definition \ref{D:ViablePair}. For this purpose we use the martingale problem formulation.

By the Skorokhod Representation Theorem, we may assume that there exists a probability space in which the desired convergence occurs w.p.1. By the proof of tightness for $\{P^{\varepsilon, \Delta} \}$ and Fatou's lemma,
\begin{equation*}
\E \int_{\mathcal{Z} \times \mathcal{Z} \times \mathcal{Y} \times [0, 1]} \left[ \lvert z_1\rvert^2 + \lvert z_2\rvert^2 + \lvert y\rvert^{2r}\right] \bar{P}(dz_1\,dz_2\,dy\,dt) < \infty
\end{equation*}
which then implies that $\int_{\mathcal{Z} \times \mathcal{Z} \times \mathcal{Y} \times [0, 1]} \left[ \lvert z_1\rvert^2 + \lvert z_2\rvert^2 + \lvert y\rvert^{2r}\right] \bar{P}(dz_1\,dz_2\,dy\,dt) < \infty$ w.p.1. Here $r$ is the order of polynomial bound in $|y|$ of the $\theta_{1}$ function.

Therefore, to show that the limit point $(\bar{\eta}, \bar{P})$ is a viable pair, we must show that it satisfies equations \eqref{E:viableSDE}, \eqref{E:viableCenter}, and \eqref{E:viableLebesgue}.

We begin with \eqref{E:viableSDE}. Let $p_1$ and $p_2$ be positive integers. Let $F$ be a real valued, smooth function with compact support on $\mathbb{R}^n$.  Let $\phi_j$, $j = 1, \dots, p_1$, be real valued, smooth functions with compact support on $\mathcal{Z} \times \mathcal{Z} \times \mathcal{Y} \times [0, 1]$. Let $S$, $T$, and $t_i$, $i = 1, \dots, p_2$, be nonnegative real numbers such that $t_i \le S <  S + T \le 1$. Let $\zeta$ be a real valued, bounded and continuous function with compact support on $(\mathbb{R}^n)^{p_2} \times \mathbb{R}^{p_1 p_2}$. For a measure $\hat{r} \in \mathcal{P}(\mathcal{Z} \times \mathcal{Z} \times \mathcal{Y} \times [0,1])$ and $t \in [0,1]$, define
\begin{equation*}
  (\hat{r}, \phi_j)_t = \int_{\mathcal{Z} \times \mathcal{Z} \times \mathcal{Y} \times [0,t]} \phi_j(z_1, z_2, y, s) \,\hat{r}(dz_1 \,dz_2 \,dy \,ds).
\end{equation*}

Define the operator $\bar{\opL}_t^{\varepsilon, \Delta}$ by
\begin{equation*}
\bar{\opL}_t^{\varepsilon, \Delta}F(\eta) = \int_{\mathcal{Z} \times \mathcal{Z} \times \mathcal{Y}}  \big( \nabla F(\eta) \big) \theta_1(\bar{X}_t, \eta, y, z_1, z_2) \,P_t^{\varepsilon, \Delta}(dz_1 \,dz_2 \,dy)
\end{equation*}
where
\begin{equation*}
  P_t^{\varepsilon, \Delta}(dz_1 \,dz_2 \,dy) = \frac{1}{\Delta} \int_t^{t+\Delta} 1_{dz_1}(u_1^\varepsilon(s)) 1_{dz_2}(u_2^\varepsilon(s)) 1_{dy}(Y_s^{\varepsilon, u^\varepsilon}) \,ds.
\end{equation*}

Then to prove \eqref{E:viableSDE}, it is sufficient to prove that as $\varepsilon \downarrow 0$,
\begin{equation}\label{E:martingaleProblem}
  \E \left[ \zeta(\eta_{t_i}^{\varepsilon, u^\varepsilon}, (P^{\varepsilon, \Delta}, \phi_j)_{t_i}, i \le p_2, j \le p_1) \left[ F(\eta_{S + T}^{\varepsilon, u^\varepsilon}) - F({\eta_{S}^{\varepsilon, u^\varepsilon}}) - \int_S^{S + T} \bar{\opL}_t^{\varepsilon, \Delta}F(\eta_{t}^{\varepsilon, u^\varepsilon}) \,dt \right] \right] \to 0
\end{equation}
and
\begin{equation}\label{E:PConvergence}
  \int_S^{S + T} \bar{\opL}_t^{\varepsilon, \Delta}F(\eta_{t}^{\varepsilon, u^\varepsilon}) \,dt - \int_{\mathcal{Z} \times \mathcal{Z} \times \mathcal{Y} \times [S, S + T]} \big( \nabla F( \bar{\eta}_t) \big) \theta_1(\bar{X}_t, \bar{\eta}_t, y, z_1, z_2) \, \bar{P}(dz_1 \,dz_2 \,dy \,dt) \to 0.
\end{equation}

For every real valued, continuous function $\phi$ with compact support and $t \in [0, 1]$,
\begin{equation*}
  (P^{\varepsilon, \Delta}, \phi)_t \to (\bar{P}, \phi)_t \qquad \text{w.p.1.}
\end{equation*}

\begin{lemma} \label{L:viableConvergence}
Let $S > 0$ and $T > 0$ be positive numbers such that $S + T  \le 1$. Consider a continuous function $\xi \colon \mathbb{R}^n \times \mathbb{R}^n \times \mathcal{Y} \times \mathcal{Z} \times \mathcal{Z} \to \mathbb{R}$ that is bounded in the first argument, affine in the second argument, not growing faster than $|y|^{r}$ in the third argument and affine in the last two arguments. Assume that $(\eta^{\varepsilon, u^\varepsilon}, P^{\varepsilon, \Delta}) \to (\bar{\eta}, \bar{P})$ in distribution for some subsequence of $\varepsilon \downarrow 0$, and that Conditions \ref{C:growth} and \ref{C:ergodic} (and in Regime 1, Condition \ref{C:center}) hold. Then the following limits are valid in distribution along this subsequence:
\begin{multline*}
  \int_{\mathcal{Z} \times \mathcal{Z} \times \mathcal{Y} \times [S, S + T]} \xi(\bar{X}_t, \eta_t^{\varepsilon, u^\varepsilon}, y, z_1, z_2) \, P^{\varepsilon, \Delta} (dz_1 \,dz_2 \,dy \,dt) \\
  \to \int_{\mathcal{Z} \times \mathcal{Z} \times \mathcal{Y} \times [S, S+T]} \xi(\bar{X}_t, \bar{\eta}_t, y, z_1, z_2) \, \bar{P}(dz_1 \,dz_2 \,dy \,dt)
\end{multline*}
and
\begin{multline*}
  \int_S^{S+T} \xi(X_t^{\varepsilon, u^\varepsilon}, \eta_t^{\varepsilon, u^\varepsilon}, Y_t^{\varepsilon, u^\varepsilon}, u_1^\varepsilon(t), u_2^\varepsilon(t) ) \,dt \\
  {} - \int_{\mathcal{Z} \times \mathcal{Z} \times \mathcal{Y} \times [S, S + T]} \xi(\bar{X}_t, \eta_t^{\varepsilon, u^\varepsilon}, y, z_1, z_2) \,P^{\varepsilon, \Delta} (dz_1 \,dz_2 \,dy \,dt) \to 0.
\end{multline*}
\end{lemma}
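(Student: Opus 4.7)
The plan is to apply the Skorokhod representation theorem to pass to a probability space on which the joint convergence $(\eta^{\varepsilon,u^\varepsilon}, P^{\varepsilon,\Delta}) \to (\bar\eta,\bar P)$ holds almost surely, with $\eta^{\varepsilon,u^\varepsilon}\to\bar\eta$ uniformly on $[0,1]$ and $P^{\varepsilon,\Delta}\to\bar P$ weakly. The assumed affineness in $\eta,z_1,z_2$ together with polynomial growth $|y|^r$ lets me decompose
\[
\xi(x,\eta,y,z_1,z_2) = \phi_0(x,y) + \phi_\eta(x,y)\eta + \phi_1(x,y)z_1 + \phi_2(x,y)z_2,
\]
where each $\phi_\bullet$ is continuous, bounded in $x$, and at most of order $|y|^r$ in $y$. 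The principal obstacle is that $\xi$ is unbounded in $y,z_1,z_2$, so plain weak convergence of $P^{\varepsilon,\Delta}$ is not enough; the work lies in upgrading it using the uniform integrability from Proposition~\ref{P:tightness}(2) against $|z_1|^2+|z_2|^2+|y|^{2r}$, whose form is tailored exactly to the linear/polynomial structure of the decomposition.

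For the first limit, I would add and subtract the integrand with $\bar\eta_t$ in place of $\eta^{\varepsilon,u^\varepsilon}_t$. By affineness the resulting difference equals $\int \phi_\eta(\bar X_t,y)(\eta^{\varepsilon,u^\varepsilon}_t-\bar\eta_t)\,P^{\varepsilon,\Delta}$, bounded by $\|\eta^{\varepsilon,u^\varepsilon}-\bar\eta\|_\infty\int C(1+|y|^r)\,P^{\varepsilon,\Delta}$; the sup-norm vanishes by Skorokhod, and the integral is uniformly bounded via Cauchy--Schwarz against the $|y|^{2r}$-moment already controlled in Section~\ref{SS:tightnessP}. The residual term $\int\xi(\bar X_t,\bar\eta_t,y,z_1,z_2)\,(P^{\varepsilon,\Delta}-\bar P)$ is then handled by a truncation argument: multiplying $\xi$ by a continuous cutoff of $\{|y|\leq M,\,|z_i|\leq M\}$, weak convergence disposes of the truncated bounded continuous piece, and Proposition~\ref{P:tightness}(2) bounds the complementary tail uniformly in $\varepsilon$, vanishing as $M\to\infty$.

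For the second limit, I would swap the order of integration so that the occupation-measure integral becomes $\int_S^{S+T}\tfrac1\Delta\int_t^{t+\Delta}\xi(\bar X_t,\eta^{\varepsilon,u^\varepsilon}_t,Y^{\varepsilon,u^\varepsilon}_s,u^\varepsilon_1(s),u^\varepsilon_2(s))\,ds\,dt$, while the plain time integral on the left is, modulo an $O(\Delta)$ endpoint error, equal to $\int_S^{S+T}\tfrac1\Delta\int_t^{t+\Delta}\xi(X^{\varepsilon,u^\varepsilon}_s,\eta^{\varepsilon,u^\varepsilon}_s,Y^{\varepsilon,u^\varepsilon}_s,u^\varepsilon_1(s),u^\varepsilon_2(s))\,ds\,dt$. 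Using affineness in the first two slots, the pointwise difference of integrands is dominated by $|X^{\varepsilon,u^\varepsilon}_s-\bar X_t|\cdot A(Y_s,u_1(s),u_2(s))+|\eta^{\varepsilon,u^\varepsilon}_s-\eta^{\varepsilon,u^\varepsilon}_t|\cdot B(Y_s,u_1(s),u_2(s))$ with $A,B$ polynomial in $|Y|$ and affine in $|u_i|$. The averaging result $X^{\varepsilon,u^\varepsilon}\to\bar X$ uniformly in $t$, combined with continuity of $\bar X$, drives $|X^{\varepsilon,u^\varepsilon}_s-\bar X_t|\to 0$ for $s\in[t,t+\Delta]$ as $\varepsilon\to 0$ then $\Delta\to 0$; and equicontinuity of $\{\eta^{\varepsilon,u^\varepsilon}\}$ (inherited from uniform convergence to the continuous limit $\bar\eta$) gives the analogous control on $|\eta^{\varepsilon,u^\varepsilon}_s-\eta^{\varepsilon,u^\varepsilon}_t|$. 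A Cauchy--Schwarz pairing against the uniform $|Y^{\varepsilon,u^\varepsilon}|^{2r}$ and $|u^\varepsilon_i|^2$ bounds from Proposition~\ref{P:tightness} then closes the estimate and yields convergence to $0$.
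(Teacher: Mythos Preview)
Your approach is correct and is essentially the one the paper indicates: it defers to Lemma~3.2 of \cite{DS12}, the only new ingredient being the uniform integrability of $P^{\varepsilon,\Delta}$ in $y$ (via the $|y|^{2r}$ moment from Proposition~\ref{P:tightness}(2)) in addition to the uniform integrability in $(z_1,z_2)$ already present there. Your Skorokhod-plus-truncation scheme is exactly this.

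One small slip: in handling the second limit you write ``using affineness in the first two slots'' and deduce a bound of the form $|X^{\varepsilon,u^\varepsilon}_s-\bar X_t|\cdot A(\cdots)$. The lemma only assumes $\xi$ is continuous and \emph{bounded} in the first argument, not affine or Lipschitz, so this estimate is not directly available. The correct route is to truncate first in $(y,z_1,z_2)$ to a compact set (as you already do), on which $\xi$ is uniformly continuous in $x$; then $X^{\varepsilon,u^\varepsilon}_s\to\bar X_s$ uniformly together with continuity of $\bar X$ gives $\sup_{t}\sup_{s\in[t,t+\Delta]}|\xi(X^{\varepsilon,u^\varepsilon}_s,\cdot)-\xi(\bar X_t,\cdot)|\to 0$ on the truncated region, while the tail is controlled by Proposition~\ref{P:tightness}(2). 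Similarly, your affine decomposition $\xi=\phi_0+\phi_\eta\eta+\phi_1 z_1+\phi_2 z_2$ tacitly excludes $\eta z_i$ cross terms; separate affineness in $\eta$ and in $(z_1,z_2)$ does allow such terms, though they are absent in the actual application to $\theta_i$ and are in any case handled by the same truncation-plus-moment argument.
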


Lemma \ref{L:viableConvergence} is similar to Lemma 3.2 from \cite{DS12} with the difference however that the function $\xi$ is not bounded in $y$.  The proof of Lemma \ref{L:viableConvergence} follows the same lines as that of Lemma 3.2 from \cite{DS12}, where here we need to make use of the uniform integrability of  $P^{\varepsilon, \Delta} $ with respect to both $(z_{1},z_{2})$ and $y$ from the second part of Proposition \ref{P:tightness}, in the same way that the uniform integrability with respect to just the control $z$ was used in \cite{DS12}. The details are omitted.

We apply this lemma with $\xi(x, \eta, y, z_1, z_2) = \big( \nabla F(\eta) \big) \theta_1(x, \eta, y, z_1, z_2)$. The first statement of Lemma \ref{L:viableConvergence} is equivalent to \eqref{E:PConvergence}, and the second is equivalent (after applying the It\^{o} formula to $F(\eta)$) to \eqref{E:martingaleProblem}, which proves
 \eqref{E:viableSDE}.

To prove \eqref{E:viableCenter}, introduce the operator $\tilde{\opL}_{z_1, z_2, x}^\varepsilon$ for functions $F \in \mathcal{C}^2(\mathcal{Y})$ defined by
\begin{align*}
  \tilde{\opL}_{z_1, z_2, x}^\varepsilon F(y) &= \frac{1}{\delta} \big( \nabla F(y) \big) \left[ \frac{\varepsilon}{\delta} f(x, y) + g(x, y) + \sqrt{\varepsilon} h(\varepsilon) \tau_1(x, y) z_1 + \sqrt{\varepsilon} h(\varepsilon) \tau_2(x, y) z_2 \right] \notag \\
  &+ \frac{\varepsilon}{\delta^2} \frac{1}{2} (\tau_1 \tau_1^\T + \tau_2 \tau_2^\T)(x,y) : \nabla \nabla F(y).
\end{align*}

Consider $\{F_\ell : \mathcal{Y} \to \mathbb{R}, \ell \in \mathbb{N} \}$ to be a smooth and dense family of bounded functions with bounded derivatives in $\mathcal{C}^2(\mathcal{Y})$. Then it is easy to see that
\begin{equation*}
  M_t^{\varepsilon} = F_\ell(Y_t^{\varepsilon, u^\varepsilon}) - F_\ell(y_0) - \int_0^t \tilde{\opL}_{u_1^\varepsilon(s), u_2^\varepsilon(s), X_s^{\varepsilon, u^\varepsilon}}^\varepsilon F_\ell(Y_s^{\varepsilon, u^\varepsilon}) \,ds
\end{equation*}
is an $\mathscr{F}_t$ martingale. Let $G(\varepsilon) = \delta^2 / \varepsilon$ and notice that $G(\varepsilon) \tilde{\opL}_{z_1, z_2, x}^\varepsilon$ converges to $\opL_{1, x}$ as $\varepsilon \downarrow 0$. Next, we define the operator
\begin{equation*}
  \mathcal{G}_{z_1, z_2, x} F_{\ell}(y) =  \big( \nabla F_{\ell}(y) \big) ( \tau_1(x, y) z_1 + \tau_2(x, y) z_2)
\end{equation*}
and write
\begin{align} \label{E:Gequation}
  &G(\varepsilon) M_t^{\varepsilon} - G(\varepsilon) (F_\ell(Y_t^{\varepsilon, u^\varepsilon}) - F_\ell(y_0)) \\
  &\quad - G(\varepsilon) \left[ \int_0^t \frac{1}{\Delta} \left[ \int_s^{s+\Delta} \tilde{\opL}_{u_1^\varepsilon(\rho), u_2^\varepsilon(\rho), X_\rho^{\varepsilon, u^\varepsilon}}^\varepsilon F_\ell(Y_\rho^{\varepsilon, u^\varepsilon}) \,d\rho \right] \,ds \right. \notag \\
  & \quad \left. - \int_0^t \tilde{\opL}_{u_1^\varepsilon(s), u_2^\varepsilon(s), X_s^{\varepsilon, u^\varepsilon}}^\varepsilon F_\ell(Y_s^{\varepsilon, u^\varepsilon}) \,ds \right] \notag \\
  & \notag = - \frac{\delta}{\varepsilon} \sqrt{\varepsilon} h(\varepsilon) \int_0^t \frac{1}{\Delta} \left[ \int_s^{s+\Delta} \left[ \mathcal{G}_{u_1^\varepsilon(\rho), u_2^\varepsilon(\rho), X_\rho^{\varepsilon, u^\varepsilon}} F_\ell(Y_\rho^{\varepsilon, u^\varepsilon}) - \mathcal{G}_{u_1^\varepsilon(\rho), u_2^\varepsilon(\rho), X_s^{\varepsilon, u^\varepsilon}} F_\ell(Y_\rho^{\varepsilon, u^\varepsilon}) \right] \,d\rho \right] \,ds \\
  & \notag \quad - \frac{\delta}{\varepsilon} \sqrt{\varepsilon} h(\varepsilon) \int_{\mathcal{Z} \times \mathcal{Z} \times \mathcal{Y} \times [0, t]} \mathcal{G}_{z_1, z_2, X_s^{\varepsilon, u^\varepsilon}} F_\ell(y) \,P^{\varepsilon, \Delta}(dz_1\,dz_2\,dy\,ds) \\
  & \notag \quad - \frac{\delta}{\varepsilon} \int_o^t \frac{1}{\Delta} \left[ \int_s^{s+\Delta} \big( \nabla F_\ell(Y_\rho^{\varepsilon, u^\varepsilon}) \big) \left[ g(X_\rho^{\varepsilon, u^\varepsilon}, Y_\rho^{\varepsilon, u^\varepsilon}) - g(X_s^{\varepsilon, u^\varepsilon}, Y_\rho^{\varepsilon, u^\varepsilon}) \right] \,d\rho \right] \,ds \\
  & \notag \quad -\frac{\delta}{\varepsilon} \int_{\mathcal{Z} \times \mathcal{Z} \times \mathcal{Y} \times [0,t]} \big( \nabla F_\ell(y) \big) g(X_s^{\varepsilon, u^\varepsilon}, y) \,P^{\varepsilon, \Delta}(dz_1\, dz_2 \,dy \,ds) \\
  & \notag \quad - \int_0^t \frac{1}{\Delta} \left[ \int_s^{s+\Delta} \left[\opL_{1, X_\rho^{\varepsilon, u^\varepsilon}} F_\ell(Y_\rho^{\varepsilon, u^\varepsilon}) - \opL_{1, X_s^{\varepsilon, u^\varepsilon}} F_\ell(Y_\rho^{\varepsilon, u^\varepsilon}) \right] \,d\rho \right] \,ds  \\
  & \notag \quad - \int_{\mathcal{Z} \times \mathcal{Z} \times \mathcal{Y} \times [0, t]}  \opL_{1, X_s^{\varepsilon, u^\varepsilon}} F_\ell(y) \,P^{\varepsilon, \Delta}(dz_1\,dz_2\,dy\,ds).
\end{align}

Now consider each of these terms as $\varepsilon \downarrow 0$. The left hand side of \eqref{E:Gequation} goes to zero since:
\begin{enumerate}[(a)]
\item $M_t^\varepsilon$ is square integrable, so $G(\varepsilon) M_t^\varepsilon \downarrow 0$ in probability as $\varepsilon \downarrow 0$,
\item $F_\ell$ is bounded, $G(\varepsilon) \left[ F_\ell(Y_t^{\varepsilon, u^\varepsilon}) - F_\ell(y_0) \right]$ converges to zero uniformly as $\varepsilon \downarrow 0$, and
\item $\Delta \downarrow 0$ as $\varepsilon \downarrow 0$,
\begin{equation*}
G(\varepsilon) \left[ \int_0^t \frac{1}{\Delta} \left[ \int_s^{s+\Delta} \tilde{\opL}_{u_1^\varepsilon(\rho), u_2^\varepsilon(\rho), X_\rho^{\varepsilon, u^\varepsilon}}^\varepsilon F_\ell(Y_\rho^{\varepsilon, u^\varepsilon}) \,d\rho \right] \,ds - \int_0^t \tilde{\opL}_{u_1^\varepsilon(s), u_2^\varepsilon(s), X_s^{\varepsilon, u^\varepsilon}}^\varepsilon F_\ell(Y_s^{\varepsilon, u^\varepsilon}) \,ds \right]
\end{equation*}
converges to zero in probability.
\end{enumerate}

We next study the right hand side of \eqref{E:Gequation}. Tightness of $\{ X^{\varepsilon, u^\varepsilon},\varepsilon>0 \}$ implies that the first term, third term, and fifth term on the right side converge to zero in probability as $\varepsilon \downarrow 0$. (Tightness of $\{ X^{\varepsilon, u^\varepsilon},\varepsilon>0 \}$ follows immediately from tightness of $\{ \eta^{\varepsilon, u^\varepsilon},\varepsilon>0 \}$ by \eqref{E:controlledDeviations}.)

Uniform integrability of $P^{\varepsilon, \Delta}$ and the fact that $\delta / \varepsilon \downarrow 0$ imply that the second and fourth terms on the right side converge to zero in probability as $\varepsilon \downarrow 0$.

Therefore,
\begin{equation*}
  \int_{\mathcal{Z} \times \mathcal{Z} \times \mathcal{Y} \times [0, t]} \opL_{1, X_s^{\varepsilon, u^\varepsilon}} F_\ell(y) P^{\varepsilon, \Delta}(dz_1\,dz_2\,dy\,ds) \to 0 \text{ in probability as } \varepsilon \downarrow 0.
 \end{equation*}

This implies \eqref{E:viableCenter} by continuity in $t$ and density of $\{F_\ell, \ell\in \mathbb{N}$\}.

Proof of \eqref{E:viableLebesgue} is identical to \cite{DS12} or \cite{S13}. More explicitly, by the fact that
$P^{\varepsilon, \Delta}(\mathcal{Z} \times \mathcal{Z} \times \mathcal{Y} \times [0, t]) = t$, along with $P(\mathcal{Z} \times \mathcal{Z} \times \mathcal{Y} \times\{t\}) = 0$ and the continuity of the mapping $t \to P(\mathcal{Z} \times \mathcal{Z} \times \mathcal{Y} \times [0, t])$, the property holds.

\subsection{Proof of Laplace principle lower bound}
We now prove the Laplace principle lower bound. 
We want to show that for all bounded, continuous functions $a$ mapping $\mathcal{C}([0,1]; \mathbb{R}^n)$ into $\mathbb{R}$,
\begin{align*}
  &\liminf_{\varepsilon \downarrow 0} - \frac{1}{h^2(\varepsilon)} \log \E \left[ \exp \left\{ - h^2(\varepsilon) a(\eta^\varepsilon) \right\} \right] \nonumber\\
  &\qquad\ge \inf_{(\xi, P) \in \mathcal{V}(\theta_1, \opL_{1,x})} \left[ \frac{1}{2} \int \left[ \lvert z_1\rvert^2 + \lvert z_2\rvert^2 \right] P(dz_1\, dz_2\, dy \,ds) + a(\xi) \right].
\end{align*}
It is sufficient to prove the lower limit along any subsequence such that
\begin{equation*}
  - \frac{1}{h^2(\varepsilon)} \log \E \left[ \exp \left\{ - h^2(\varepsilon) a(\eta^\varepsilon) \right\} \right]
\end{equation*}
converges. Such a subsequence exists because $\lvert - 1/h^2(\varepsilon) \log \E \left[ \exp \left\{ - h^2(\varepsilon) a(\eta^\varepsilon) \right\} \right] \rvert \le \lVert a\rVert_\infty$.
By Lemma \ref{L:uBound}, we may assume that
\begin{equation*}
  \sup_{\varepsilon > 0} \E\int_0^1 \lvert u^\varepsilon(s) \rvert^2 \,ds \le N.
\end{equation*}
for some constant $N$.

We construct the family of occupation measures $P^{\varepsilon, \Delta}$, and the family $\{(\eta^{\varepsilon, u^\varepsilon}, P^{\varepsilon, \Delta}), \varepsilon >0 \}$ is tight. Hence, for any subsequence of $\varepsilon \downarrow 0$ there is a further subsequence for which
\begin{equation*}
  (\eta^{\varepsilon, u^\varepsilon}, P^{\varepsilon, \Delta}) \to (\bar{\eta}, \bar{P}) \text{ in distribution}
\end{equation*}
with $(\bar{\eta}, \bar{P}) \in \mathcal{V}(\theta_1, \opL_{1,x})$. By Fatou's lemma, we then obtain
\begin{align*}
  &\liminf_{\varepsilon \downarrow 0} \left( - \frac{1}{h^2(\varepsilon)} \log \E \left[ \exp \left\{ - h^2(\varepsilon) a(\eta^\varepsilon) \right\} \right] \right) \\
    &\quad \ge \liminf_{\varepsilon \downarrow 0} \left( \E \left[ \frac{1}{2} \int_0^1 \lvert u^\varepsilon(s)\rvert^2 \,ds + a(\eta^{\varepsilon,u^\varepsilon}) \right] - \varepsilon \right) \notag \\
    &\quad \ge \liminf_{\varepsilon \downarrow 0} \left( \E \left[ \frac{1}{2} \int_0^1 \frac{1}{\Delta} \int_t^{t+\Delta} \lvert u^\varepsilon(s)\rvert^2 \,ds \,dt + a(\eta^{\varepsilon,u^\varepsilon}) \right] \right) \notag \\
    &\quad = \liminf_{\varepsilon \downarrow 0} \left( \E \left[ \frac{1}{2} \int_{\mathcal{Z} \times \mathcal{Z} \times \mathcal{Y} \times [0,1]} \left[ \lvert z_1\rvert^2 +  \lvert z_2\rvert^2 \right] \,P^{\varepsilon, \Delta} (dz_1\,dz_2\,dy\,dt) + a(\eta^{\varepsilon,u^\varepsilon}) \right] \right) \notag \\
    &\quad \ge \E \left[ \frac{1}{2} \int_{\mathcal{Z} \times \mathcal{Z} \times \mathcal{Y} \times [0,1]} \left[ \lvert z_1\rvert^2 +  \lvert z_2\rvert^2 \right] \,\bar{P} (dz_1\,dz_2\,dy\,dt) + a(\bar{\eta}) \right] \notag \\
    &\quad \ge \inf_{(\xi, P) \in \mathcal{V}(\theta_1, \opL_{1,x})} \left\{ \frac{1}{2} \int_{\mathcal{Z} \times \mathcal{Z} \times \mathcal{Y} \times [0,1]} \left[ \lvert z_1\rvert^2 +  \lvert z_2\rvert^2 \right] \,P(dz_1\,dz_2\,dy\,dt) + a(\xi) \right\}. \notag
\end{align*}

This concludes the proof of the Laplace principle lower bound.
\subsection{Proof of compactness of level sets of $S(\cdot)$}

We want to prove that for each $s < \infty$, the set
\begin{equation*}
  \Xi_s = \{ \xi \in \mathcal{C}([0,1]; \mathbb{R}^n) : S(\xi) \le s \}
\end{equation*}
is a compact subset of $\mathcal{C}([0,1]; \mathbb{R}^n)$. The proof is analogous to the proof of the lower bound. We need to show precompactness of $\Xi_s$ and that it is a closed set.

Precompactness of the pair $\{ (\xi^{n},P^n), n>0\}$ follows by standard arguments, see for example \cite{DS12}. Next we must show that the limit of a sequence of viable pairs is a viable pair. Fix $K < \infty$ and consider any convergent sequence $\{ (\xi^n, P^n), n > 0\}$ such that for every $n > 0$, $(\xi^n, P^n) \in \mathcal{V}(\theta_1, \opL_{1,x})$ and
\begin{equation*}
  \int_{\mathcal{Z} \times \mathcal{Z} \times \mathcal{Y} \times [0,1]} \left[ \lvert z_1\rvert^2 +  \lvert z_2\rvert^2 +  \lvert y\rvert^{2r}\right] \,P^n (dz_1\,dz_2\,dy\,dt) < K,
\end{equation*}
where $r$ is the order of the polynomial bound in $\lvert y \rvert$ of $\theta_1$.

Since $(\xi^n, P^n)$ is a viable pair, we get that
\begin{equation*}
  \xi^n_t = \int_{\mathcal{Z} \times \mathcal{Z} \times \mathcal{Y} \times [0,t]} \theta_1(\bar{X}_s, \xi^n_s, y, z_1, z_2) \,P^n(dz_1 \,dz_2 \,dy \,ds)
\end{equation*}
and
\begin{equation*}
  \int_0^t \int_{\mathcal{Z} \times \mathcal{Z} \times \mathcal{Y}} \opL_{1, \bar{X}_s} F(y) \,P^n(dz_1 \,dz_2 \,dy \,ds) = 0
\end{equation*}
for every $t \in [0,1]$ and  every $F \in \mathcal{C}^2(\mathcal{Y})$. Then by the convergence of $(\xi^n, P^n)$ to $(\xi, P)$, we get that $(\xi, P) \in \mathcal{V}(\theta_1, \opL_{1,x})$.

Finally, we must prove lower semicontinuity, that is
\begin{equation*}
\liminf_{n \to \infty} S(\xi^n) \ge S(\xi)
\end{equation*}

Without loss of generality, we may assume that there is some $M<\infty$ such that $\liminf_{n \to \infty} S(\xi^n)\leq M$. Also, by the definition of $S(\xi^n)$, we obtain that one can find measures $\{P^{n},n<\infty\}$ such that $(\xi^{n}, P^{n}) \in \mathcal{V}(\theta_1, \opL_{1,x})$,
\[
\sup_{n<\infty}\int_{\mathcal{Z} \times \mathcal{Z} \times \mathcal{Y} \times [0, 1]} \left[ \lvert z_1 \rvert^2 + \lvert z_2 \rvert^2 \right] \,P^n(dz_1\,dz_2\,dy\,dt)<M+1
\]
and
\[
S(\xi^n)\geq \frac{1}{2} \int_{\mathcal{Z} \times \mathcal{Z} \times \mathcal{Y} \times [0, 1]} \left[ \lvert z_1 \rvert^2 + \lvert z_2 \rvert^2 \right] \,P^n(dz_1\,dz_2\,dy\,dt)-\frac{1}{n}.
\]

Then by Fatou's lemma we have
\begin{align*}
   &\liminf_{n \to \infty} S(\xi^n) \geq \liminf_{n \to \infty} \left[\frac{1}{2} \int_{\mathcal{Z} \times \mathcal{Z} \times \mathcal{Y} \times [0, 1]} \left[ \lvert z_1 \rvert^2 + \lvert z_2 \rvert^2 \right] \,P^n(dz_1\,dz_2\,dy\,dt)-\frac{1}{n}\right] \\
   \notag &\quad\ge \frac{1}{2} \int_{\mathcal{Z} \times \mathcal{Z} \times \mathcal{Y} \times [0, 1]} \left[ \lvert z_1 \rvert^2 + \lvert z_2 \rvert^2 \right] \,P(dz_1\,dz_2\,dy\,dt) \\
    \notag &\quad\ge \inf_{(\xi, P) \in \mathcal{V}(\theta_1, \opL_{1,x})} \left\{\frac{1}{2} \int_{\mathcal{Z} \times \mathcal{Z} \times \mathcal{Y} \times [0, 1]} \left[ \lvert z_1 \rvert^2 + \lvert z_2 \rvert^2 \right] \,P(dz_1\,dz_2\,dy\,dt)\right\}
   = S(\xi) .
\end{align*}

\subsection{Proof of Laplace principle upper bound and representation formula}

The first step is to establish the equivalence of the control formulation to the relaxed control formulation, as in \cite{DS12}. Let us briefly recall how this is done.

The action functional $S(\xi)$ can be written in terms of a local action functional, i.e.,
\begin{equation*}
  S(\xi) = \int_0^1 L^{rel}(\bar{X}_s, \xi_s, \dot{\xi}_s) \,ds.
\end{equation*}

This follows from the definition of a viable pair by setting
\begin{equation*}\label{E:relaxLocalRate}
  L^{rel}(x, \eta, \beta) = \inf_{P \in \mathcal{A}_{x, \eta, \beta}^{rel}} \int_{\mathcal{Z} \times \mathcal{Z} \times \mathcal{Y}} \frac{1}{2} \left[ \lvert z_1\rvert^2 + \lvert z_2\rvert^2 \right] \,P(dz_1\, dz_2\, dy)
\end{equation*}
where
\begin{align*}
  & \mathcal{A}_{x, \eta, \beta}^{rel} = \left\{ P \in \mathcal{P}(\mathcal{Z} \times \mathcal{Z} \times \mathcal{Y}) : \int_{\mathcal{Z} \times \mathcal{Z} \times \mathcal{Y}} \opL_{1, x} F(y) \,P(dz_1\,dz_2\,dy) = 0 \right. \\
  &\left.\qquad \text{ for all } F \in \mathcal{C}^2(\mathcal{Y}),
   \int_{\mathcal{Z} \times \mathcal{Z} \times \mathcal{Y}} \left[ \vert z_1\rvert^2 + \lvert z_2\rvert^2 + \lvert y\rvert^{2r}\right] \,P(dz_1\,dz_2\,dy) < \infty, \right.\\
  &\left.\qquad \text{ and } \beta = \int_{\mathcal{Z} \times \mathcal{Z} \times \mathcal{Y}} \theta_1(x, \eta, y, z_1, z_2) \,P(dz_1\,dz_2\,dy) \right\}. \notag
\end{align*}
The constant $r$ in the above expression is the order of the polynomial bound of $\lvert y \rvert$ in $\theta_1$.

Note that any measure $P \in \mathcal{P}(\mathcal{Z} \times \mathcal{Z} \times \mathcal{Y})$ can be decomposed in the form
\begin{equation}\label{E:measureDecomposition}
  P(dz_1\,dz_2\,dy) = \nu(dz_1\,dz_2|y) \mu(dy)
\end{equation}
where $\mu$ is a probability measure on $\mathcal{Y}$ and $\nu$ is a stochastic kernel on $\mathcal{Z} \times \mathcal{Z}$ given $\mathcal{Y}$. Following the terminology of \cite{DS12}, we refer to this as a ``relaxed'' formulation.

Inserting \eqref{E:measureDecomposition} into \eqref{E:viableCenter} and noticing that $\opL_{1,x}$ does not depend on the control variables, we obtain that for every $F \in \mathcal{C}^2(\mathcal{Y})$,
\[
  \int_\mathcal{Y} \opL_{1, x} F(y) \,\mu(dy) = 0.
\]

The nondegeneracy of $(\tau_1 \tau_1^\T + \tau_2\tau_2^\T)$ and the previous equation show that $\mu(dy)$ is the unique invariant measure corresponding to the operator $\opL_{1, x}$ (i.e.\ $\mu(dy) = \mu_{1,x}(dy)$).

Because the cost is convex in $z$ and $\theta_1$ is affine in $z$, the relaxed control formulation is equivalent to the ordinary control formulation of the local rate function:
\begin{equation}\label{E:ordLocalRate}
  L^o(x, \eta, \beta) = \inf_{(v, \mu) \in \mathcal{A}_{x,\eta,\beta}^o} \frac{1}{2} \int_\mathcal{Y} \lvert v(y) \rvert^2 \,\mu_{1, x}(dy)
\end{equation}
where
\begin{align*}
  \mathcal{A}_{x, \eta,\beta}^o &= \left\{ v(\cdot) = (v_1(\cdot), v_2(\cdot)) \colon \mathcal{Y} \to \mathbb{R}^{2m}, \mu \in \mathcal{P}(\mathcal{Y}) \colon (v, \mu) \text{ satisfy } \right. \\
  & \left. \int_\mathcal{Y} \opL_{1, x} F(y) \,\mu_{1,x}(dy) = 0 \text{ for all } F \in \mathcal{C}^2(\mathcal{Y}), \int_\mathcal{Y} \left[\lvert v(y)\rvert^2 +|y|^{2r}\right]\,\mu(dy) < \infty \right. \notag \\
  &\left. \text{ and } \beta = \int_\mathcal{Y} \theta_1(x, \eta, y, v_1(y), v_2(y)) \,\mu_{1,x}(dy) \right\}. \notag
\end{align*}

The equivalence of $L^{rel}(x, \eta, \beta)$ and $L^o(x, \eta, \beta)$ follows from Jensen's inequality and the fact that $\theta_1(x, \eta, y, z_1, z_2)$ and $\opL_{1, x}$ are affine in $z_1$ and $z_2$.

The following result is a key statement for the equivalence of Theorems \ref{T:main} and \ref{T:control}.

\begin{theorem} \label{T:controlRepresentation}
Under Conditions \ref{C:growth}, \ref{C:ergodic}, and Condition \ref{C:center}, the infimization problem \eqref{E:ordLocalRate} has the explicit solution
\begin{equation*}
  L^o(x, \eta, \beta) = \frac{1}{2} (\beta - \kappa(x, \eta))^\T q^{-1}(x) (\beta - \kappa(x, \eta))
\end{equation*}
where $\kappa(x, \eta)$ and $q(x)$ are given by \eqref{E:rDef} and \eqref{E:qDef}. Furthermore, with $\alpha_1(x, y)$, $\alpha_2(x, y)$ given by \eqref{E:alphaDef}, the control $v(y) = (v_1(y), v_2(y))$ defined by
\begin{align*}
  v_1(y) &= \alpha_1(x,y)^\T q^{-1}(x)(\beta - \kappa(x, \eta)) \\
  v_2(y) &= \alpha_2(x,y)^\T q^{-1}(x)(\beta - \kappa(x, \eta)) \notag
\end{align*}
attains the infimum in the variational problem \eqref{E:ordLocalRate}.
\end{theorem}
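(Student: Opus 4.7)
The plan is to solve the constrained quadratic minimization \eqref{E:ordLocalRate} by completing the square in $L^2(\mu_{1,x})$, which is essentially a Lagrange multiplier calculation exploiting the affine dependence of $\theta_1$ on $(z_1,z_2)$. First I would simplify the admissibility condition. Since $\theta_1$ in \eqref{E:theta} can be rewritten as $\theta_1(x,\eta,y,z_1,z_2) = \alpha_1(x,y) z_1 + \alpha_2(x,y) z_2 + j_1 (\nabla_y \Phi_1(x,y)) g(x,y) + (\nabla_x \bar{\lambda}_1(x))\eta$, integrating against the measure $\mu$ in $\mathcal{A}_{x,\eta,\beta}^o$ and using the fact that the stationarity constraint $\int \opL_{1,x} F \, d\mu = 0$ forces $\mu = \mu_{1,x}$, the admissibility constraint collapses to
\begin{equation*}
  \beta - \kappa(x,\eta) = \int_{\mathcal{Y}} \bigl[\alpha_1(x,y) v_1(y) + \alpha_2(x,y) v_2(y)\bigr]\, \mu_{1,x}(dy),
\end{equation*}
with $\kappa(x,\eta)$ exactly as in \eqref{E:rDef}.

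Next I would guess the optimizer by a formal Lagrange multiplier argument. Introducing $\rho \in \mathbb{R}^n$ for the vector-valued linear constraint, the pointwise first-order conditions read $v_i^\ast(y) = \alpha_i(x,y)^\T \rho$ for $i=1,2$. Plugging these back into the constraint and using the definition \eqref{E:qDef} of $q(x)$ yields $\beta - \kappa(x,\eta) = q(x)\rho$, so $\rho = q^{-1}(x)(\beta - \kappa(x,\eta))$, which recovers the controls claimed in the statement.

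To convert this heuristic into a proof, I would show directly that $v^\ast$ is optimal by a clean orthogonality argument. Given any admissible $v$, write $v = v^\ast + w$ with $w = (w_1,w_2)$ satisfying the homogeneous constraint $\int_{\mathcal{Y}}[\alpha_1 w_1 + \alpha_2 w_2]\, \mu_{1,x}(dy) = 0$. Expanding the cost,
\begin{equation*}
  \tfrac{1}{2}\int_{\mathcal{Y}}\bigl(|v_1|^2 + |v_2|^2\bigr)\mu_{1,x}(dy) = \tfrac{1}{2}\int_{\mathcal{Y}}\bigl(|v_1^\ast|^2 + |v_2^\ast|^2\bigr)\mu_{1,x}(dy) + \int_{\mathcal{Y}}(v_1^\ast w_1 + v_2^\ast w_2)\mu_{1,x}(dy) + \tfrac{1}{2}\int_{\mathcal{Y}}(|w_1|^2+|w_2|^2)\mu_{1,x}(dy).
\end{equation*}
The cross term equals $\rho^\T \int[\alpha_1 w_1 + \alpha_2 w_2]\mu_{1,x}(dy) = 0$, so the cost of any admissible $v$ exceeds that of $v^\ast$. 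A direct calculation then gives $\int(|v_1^\ast|^2+|v_2^\ast|^2)\mu_{1,x}(dy) = \rho^\T q(x)\rho = (\beta-\kappa(x,\eta))^\T q^{-1}(x)(\beta-\kappa(x,\eta))$, yielding the claimed formula for $L^o(x,\eta,\beta)$.

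The main obstacle is minor but should be addressed: one must justify the existence of $q^{-1}(x)$. I would argue that $q(x)$ is positive definite using the uniform ellipticity of $\tau_1 \tau_1^\T + \tau_2 \tau_2^\T$ from Condition \ref{C:ergodic}, combined with the structural form of $\alpha_1, \alpha_2$; the argument is analogous to the one used in the large deviations setting in \cite{S13, DS12}. Should degeneracy of $q(x)$ occur on a subspace, admissibility requires $\beta - \kappa(x,\eta)$ to lie in the range of $q(x)$, and the formula remains valid with $q^{-1}$ interpreted as the Moore--Penrose pseudoinverse (with $L^o = \infty$ off the range). Everything else in the argument is a routine $L^2(\mu_{1,x})$ computation.
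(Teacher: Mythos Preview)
Your proposal is correct and follows essentially the same route as the paper: both reduce the constraint to $\beta-\kappa(x,\eta)=\int_{\mathcal{Y}}[\alpha_1 v_1+\alpha_2 v_2]\,\mu_{1,x}(dy)$ and then solve the resulting linearly constrained quadratic problem, with the paper invoking Lemma~5.1 of \cite{DS12} for the lower bound while you spell out the equivalent orthogonality/completing-the-square argument directly. Your added remark on the invertibility of $q(x)$ (and the pseudoinverse fallback) is a point the paper leaves implicit.
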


\begin{proof}
Observe that for any $v \in \mathcal{A}_{x, \eta, \beta}^o$,
\begin{equation*}
  \int_\mathcal{Y} \lvert v(y) \rvert^2 \, \mu_{1,x}(dy) \ge (\beta - \kappa(x, \eta))^\T q^{-1}(x)(\beta - \kappa(x, \eta)).
\end{equation*}
This follows because any $v \in \mathcal{A}_{x, \eta, \beta}^o$ satisfies
\begin{align*}
  \beta &= \int_\mathcal{Y} \theta_1(x, \eta, y , v_1(y), v_2(y)) \,\mu_{1,x}(dy) \\
\notag  &= \kappa(x, \eta) + \int_\mathcal{Y} \left[ \sigma(x,y) v_1(y) + \big( \nabla_y \chi(x,y) \big) (\tau_1(x,y)v_1(y) + \tau_2(x,y) v_2(y)) \right] \mu_{1,x}(dy) \\
\notag  &= \kappa(x, \eta) + \int_\mathcal{Y} \big( \alpha_1(x,y) v_1(y) + \alpha_2(x,y) v_2(y) \big) \mu_{1,x}(dy).
\end{align*}
Then treating $x$ and $\eta$ as parameters and applying Lemma 5.1 from \cite{DS12} to the relation above, we get the claim. Next, observe that by choosing (with $x$ and $\eta$ treated as parameters)
\begin{align*}
  v_1(y) &= \alpha_1(x,y)^\T q^{-1}(x)(\beta - \kappa(x, \eta)) \\
  v_2(y) &= \alpha_2(x,y)^\T q^{-1}(x)(\beta - \kappa(x, \eta)) \notag
\end{align*}
we have
\begin{equation*}
  \int_\mathcal{Y} \lvert v(y) \rvert^2 \,\mu_{1,x}(dy) = (\beta - \kappa(x, \eta))^\T q^{-1}(x) (\beta - \kappa(x, \eta)).
\end{equation*}
This completes the proof of the theorem.
\end{proof}

Now we can prove the Laplace principle upper bound. We must show that for all bounded, continuous functions $a$ mapping $\mathcal{C}([0,1]; \mathbb{R}^n)$ into $\mathbb{R}$
\begin{equation*}
  \limsup_{\varepsilon \downarrow 0} - \frac{1}{h^2(\varepsilon)} \log \E \left[ \exp \left\{ -h^2(\varepsilon) a(\eta^\varepsilon) \right\} \right] \le \inf_{\xi \in \mathcal{C}([0,1]; \mathbb{R}^n)} [S(\xi) + a(\xi)].
\end{equation*}
Let $\zeta > 0$ be given and consider $\psi \in \mathcal{C}([0,1];\mathbb{R}^n)$ with $\psi_0 = 0$ such that
\begin{equation*}
  S(\psi) + a(\psi) \le \inf_{\xi \in \mathcal{C}([0,1]: \mathbb{R}^n)} [S(\xi) + a(\xi)] + \zeta < \infty.
\end{equation*}
Since $a$ is bounded, this implies that $S(\psi) < \infty$, and thus $\psi$ is absolutely continuous. Theorem \ref{T:controlRepresentation} shows that $L^o(x, \eta, \beta)$ is continuous and finite at each $(x, \eta, \beta) \in \mathbb{R}^{3n}$. By a mollification argument we can assume that $\dot{\psi}$ is piecewise continuous, see Section 6.5 in \cite{DE97}. Given this $\psi$ define
\begin{align*}
  \bar{u}_1(t, x, \eta, y) &= \alpha_1(x,y)^\T q^{-1}(x)(\dot{\psi}_t - \kappa(x, \eta)) \\
  \bar{u}_2(t, x, \eta, y) &= \alpha_2(x,y)^\T q^{-1}(x)(\dot{\psi}_t - \kappa(x, \eta)) \notag
\end{align*}
with $\alpha_1$ and $\alpha_2$ defined as in Theorem \ref{T:controlRepresentation}. Define a control in feedback form by
\begin{equation*}
  \bar{u}^\varepsilon(t) = (\bar{u}_1(t), \bar{u}_2(t)) =  \left( \bar{u}_1(t, \bar{X}_t, \eta_t^\varepsilon, Y_t^\varepsilon),  \bar{u}_2(t, \bar{X}_t, \eta_t^\varepsilon, Y_t^\varepsilon)\right).
\end{equation*}

Then ${\eta}^{\varepsilon,\bar{u}^\varepsilon} \to \bar{\eta}$ in distribution, where w.p.1
\begin{align*}
  &\bar{\eta}_t = \int_0^t \kappa(\bar{X}_s, \bar{\eta}_s) \,ds
  + \int_0^t \left[ \int_\mathcal{Y} \left[ \left( \sigma(\bar{X}_s, y) + \big( \nabla_y \chi(\bar{X}_s, y) \big) \tau_1(\bar{X}_s, y) \right) \bar{u}_1(s) \right. \right. \\
  &\quad \left.\vphantom{\int_\mathcal{Y}} \left. + \big( \nabla_y \chi(\bar{X}_s, y) \big) \tau_2(\bar{X}_s, y)\bar{u}_2(s) \right] \mu_{1, \bar{X}_s}(dy) \right] ds \notag \\
  &= \int_0^t \kappa(\bar{X}_s, \bar{\eta}_s) \,ds + \int_0^t \left[ \int_\mathcal{Y} \left[ \alpha_1 \alpha_1^\T(\bar{X}_s,y) + \alpha_2 \alpha_2^\T(\bar{X}_s,y) \right] \mu_{1, \bar{X}_s}(dy) \right] q^{-1}(\bar{X}_s)(\dot{\psi}_s - \kappa(\bar{X}_s, \bar{\eta}_s)) \,ds \notag \\
  &= \int_0^t \kappa(\bar{X}_s, \bar{\eta}_s) \,ds + \int_0^t q(\bar{X}_s) q^{-1}(\bar{X}_s)(\dot{\psi}_s - \kappa(\bar{X}_s, \bar{\eta}_s)) \,ds \notag \\
  &= \int_0^t \dot{\psi}_s \,ds = \psi_t. \notag
\end{align*}

The cost satisfies
\begin{equation*}
  \E \left( \frac{1}{2} \int_0^1 \lvert \bar{u}_s^\varepsilon \rvert^2 \,ds - \frac{1}{2} \int_0^1 \int_\mathcal{Y} \lvert \bar{u}(s, \bar{X}_s, \bar{\eta}_s, y) \rvert^2 \,\mu_{1, \bar{X}_s}(dy) \,ds \right)^2 \to 0 \text{ as } \varepsilon \downarrow 0.
\end{equation*}

Theorem \ref{T:controlRepresentation} implies that
\begin{equation*}
  \E \frac{1}{2} \int_0^1 \int_\mathcal{Y} \lvert \bar{u}(s, \bar{X}_s, \bar{\eta}_s, y) \rvert^2 \,\mu_{1, \bar{X}_s}(dy) \,ds = \E S(\bar{\eta}) = S(\psi).
\end{equation*}

Then we obtain
\begin{align*}
  &\limsup_{\varepsilon \downarrow 0}  - \frac{1}{h^2(\varepsilon)} \log \E \left[ \exp \left\{ -h^2(\varepsilon) a(\eta^\varepsilon) \right\} \right] = \limsup_{\varepsilon \downarrow 0} \inf_{u \in \mathcal{A}} \E \left[ \frac{1}{2} \int_0^1 \lvert u(t) \rvert^2 \,dt + a(\eta^{\varepsilon, u}) \right] \\
  &\qquad \le \limsup_{\varepsilon \downarrow 0}  \E \left[ \frac{1}{2} \int_0^1 \lvert \bar{u}^\varepsilon(t) \rvert^2 \,dt + a(\eta^{\varepsilon, \bar{u}^\varepsilon}) \right] \notag \\
  &\qquad = \E \left[ \frac{1}{2} \int_0^1 \int_\mathcal{Y} \lvert \bar{u}(s, \bar{X}_s, \bar{\eta}_s, y) \rvert^2 \,\mu_{1, \bar{X}_s}(dy) \,ds + a(\bar{\eta}) \right] \notag \\
  &\qquad = [S(\psi) + a(\psi)] \notag \\
  &\qquad \le \inf_{\xi \in \mathcal{C}([0,1]; \mathbb{R}^n)} [S(\xi) + a(\xi)] + \zeta. \notag
\end{align*}

Since $\zeta >0$ is arbitrary, the upper bound is proved. Furthermore, we have an explicit representation formula for the action functional, given by
\begin{equation*}
  S(\xi) = \frac{1}{2} \int_0^1 \left(\dot{\xi}_s - \kappa \left(\bar{X}_s, \xi_{s} \right) \right)^\T q^{-1}(\bar{X}_s) \left(\dot{\xi}_s - \kappa \left(\bar{X}_s, \xi_{s} \right) \right) \,ds
\end{equation*}
if $\xi \in \mathcal{C}([0, 1]; \mathbb{R}^n)$ is absolutely continuous, and $\infty$ otherwise.

\section{Comments on the Proofs for Regime 2}

The structure of the proof for Regime 2 is identical to that of Regime 1, after replacing $\lambda_1, \theta_1, \opL_1, \Phi_1,$ and $\mu_1$ by $\lambda_2, \theta_2, \opL_2, \Phi_2,$ and $\mu_2$ respectively. Hence we do not repeat it here. For example in Regime 2, applying the It\^{o} formula to $\Phi_2(X_t^{\varepsilon, u^\varepsilon}, Y_t^{\varepsilon, u^\varepsilon})$ and some term rearranging shows that
\begin{align*}
  \eta_t^{\varepsilon, u^\varepsilon} &= \int_0^t \left[\vphantom{\frac{j_2}{2} \tau_1^\T} j_2 b(\bar{X}_s, Y_s^{\varepsilon, u^\varepsilon} ) + \big( \nabla_y \Phi_2(\bar{X}_s, Y_s^{\varepsilon, u^\varepsilon} ) \big) \left[ \tau_1(\bar{X}_s, Y_s^{\varepsilon, u^\varepsilon} ) u_{1}^\varepsilon(s) + \tau_2(\bar{X}_s, Y_s^{\varepsilon, u^\varepsilon} ) u_{2}^\varepsilon(s) \right] \right.\\
  \notag &\left. + \big( \nabla_x \bar{\lambda}_2(\bar{X}_s) \big) \eta_s^{\varepsilon, u^\varepsilon} + \sigma(\bar{X}_s, Y_s^{\varepsilon, u^\varepsilon} ) u_1^\varepsilon(s) + j_2 \big( \nabla_y \Phi_2( X_s^{\varepsilon, u^\varepsilon}, Y_s^{\varepsilon, u^\varepsilon} ) \big) f( X_s^{\varepsilon, u^\varepsilon}, Y_s^{\varepsilon, u^\varepsilon} ) \right. \\
  \notag &\left. + \frac{j_2}{2} \left( \left(\tau_1\tau_1^\T + \tau_2\tau_2^\T \right)( X_s^{\varepsilon, u^\varepsilon}, Y_s^{\varepsilon, u^\varepsilon} ) : \nabla_y\nabla_y \Phi_2( X_s^{\varepsilon, u^\varepsilon}, Y_s^{\varepsilon, u^\varepsilon} ) \right) \right] \,ds + R^\varepsilon,
\end{align*}
where $R^\varepsilon$ contains the  additional terms which go to zero as $\varepsilon\downarrow 0$. However, the necessary statements that were needed for Regime 1 and which are proved in Appendix B do need some special care. We address these in Appendix C.

\appendix

\section{Regularity results}

The following theorem collects results from \cite{PV01} and \cite{PV03} that are used in this paper.
\begin{theorem} \label{T:regularity}
Let Conditions \ref{C:growth} and \ref{C:ergodic} be satisfied. In Regime $i = 1, 2$ we have that,
\begin{enumerate}[(i)]
\item There exists a unique invariant measure $\mu_{i, x}(dy)$ associated with the operator $\opL_{i, x}$. For all $x \in \mathbb{R}^n$ and $q \in \mathbb{N}$,
\begin{equation*}
    \int_\mathcal{Y} \lvert y \rvert^q \,\mu_{i, x}(dy) < \infty.
\end{equation*}
Moreover, $\mu_{i, x}$ has a density which is twice differentiable in $x$.

\item Assume that $G(x, y) \in \mathcal{C}^{2, \alpha}(\mathbb{R}^n\times \mathcal{Y})$. Then
\begin{equation*}
    \bar{G}(x) = \int_\mathcal{Y} G(x, y) \,\mu_{i, x}(dy)
\end{equation*}
is twice differentiable in $x$.

\item Assume that $F(x, y) \in \mathcal{C}^{2, \alpha}(\mathbb{R}^n\times \mathcal{Y})$,
\begin{equation*}
    \int_\mathcal{Y} F(x, y) \,\mu_{i, x}(dy) = 0,
\end{equation*}
and that for some positive constants $K$ and $q_{F}$,
\begin{equation*}
   \lvert F(x, y) \rvert + \lVert \nabla_x F(x, y) \rVert + \lVert \nabla_x \nabla_x F(x, y) \rVert \le K (1 + \lvert y \rvert^{q_{F}}).
\end{equation*}
Then there is a unique solution from the class of functions which grow at most polynomially in $\lvert y \rvert$  to
\begin{equation*}
    \opL_{i, x} u(x, y) = - F(x, y), \quad \int_\mathcal{Y} u(x, y) \,\mu_{i, x}(dy) = 0.
\end{equation*}
Moreover, the solution satisfies $u(\cdot, y) \in \mathcal{C}^2$ for every $y \in \mathcal{Y}$, $\nabla_x \nabla_x u \in \mathcal{C}(\mathbb{R}^n\times\mathcal{Y})$, and there exist positive constants $K'$ that change from line to line such that
\begin{align*}
\lvert u(x, y) \rvert &\le K' (1 + \lvert y \rvert )^{(q_{F}+1-r)^{+}},\nonumber\\
 \lVert \nabla_y u(x, y) \rVert &\le K' (1 + \lvert y \rvert ^{(q_{F}+1-r)^{+}}+\lvert y \rvert ^{q_{F}})\nonumber\\
\lVert \nabla_x u(x, y) \rVert &\le K' (1 + \lvert y \rvert ^{(q_{F}+1-r)^{+}}+\lvert y \rvert ^{(q_{F}+2(1-r))^{+}}),\nonumber\\
 \lVert \nabla_x \nabla_x u(x, y) \rVert &\le K' (1 + \lvert y \rvert ^{(q_{F}+1-r)^{+}}+\lvert y \rvert ^{(q_{F}+2(1-r))^{+}}+\lvert y \rvert ^{(q_{F}+3(1-r))^{+}} )
\end{align*}
where $r$ is as defined in Condition \ref{C:ergodic}.
\end{enumerate}
\end{theorem}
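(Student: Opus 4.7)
Since Theorem \ref{T:regularity} quotes results from \cite{PV01,PV03}, the literal proof amounts to invoking those references; I nonetheless sketch the route I would follow from scratch, treating $x$ as a frozen parameter throughout. The central tool is the family of Lyapunov functions $V_p(y)=(1+|y|^{2})^{p/2}$: a direct computation using Condition \ref{C:ergodic}(i)--(ii) gives, uniformly in $x$ and for $|y|>R$, an estimate of the form $\opL_{i,x}V_p(y)\le -C_1|y|^{r-1}V_p(y)+C_2$. For part (i), combining this with the uniform nondegeneracy of $\tau_1\tau_1^\T+\tau_2\tau_2^\T$ in the standard Hasminskii scheme gives uniform positive recurrence, yielding existence and uniqueness of $\mu_{i,x}$ together with all polynomial moments. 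Smoothness of the density $m_{i,x}$ in $x$ is obtained by differentiating the stationary Fokker--Planck equation $\opL_{i,x}^{*}m_{i,x}=0$ in $x$ and applying interior Schauder estimates, legitimate thanks to the $\mathcal{C}_b^{2,2+\alpha}$ regularity from Condition \ref{C:growth}(iii). Part (ii) then follows by differentiation under the integral sign, the dominated convergence being justified by the moment bounds of (i) combined with the tail decay of $\nabla_x m_{i,x}$ inherited from Schauder estimates on the adjoint equation.

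For part (iii), existence of a solution in the polynomial growth class is obtained from the probabilistic representation $u(x,y)=\int_0^{\infty}\E_y[F(x,Y_t^{x})]\,dt$, where $Y_t^{x}$ is the diffusion generated by $\opL_{i,x}$; the centering condition and the Lyapunov-induced exponential ergodicity make the integral absolutely convergent with rate depending on $q_F$ and $r$. Uniqueness in the polynomial class follows from a maximum-principle argument: any two such solutions differ by an $\opL_{i,x}$-harmonic polynomially growing function, which the Lyapunov estimate forces to be constant and then zero via the normalization. The sharp pointwise bound $|u(x,y)|\le K'(1+|y|)^{(q_F+1-r)^{+}}$ is obtained by comparison, choosing $\lambda$ so that $\opL_{i,x}(\pm u-\lambda V_{(q_F+1-r)^{+}})\le 0$ outside a large ball and invoking the maximum principle. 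For the derivative bounds, one differentiates the Poisson equation: $\nabla_x u$ solves
\[
\opL_{i,x}(\nabla_x u)=-\nabla_x F-(\nabla_x\opL_{i,x})u,
\]
whose right-hand side grows at most like $|y|^{q_F+(1-r)^{+}}$ by the previous step, so reapplying the comparison argument yields the exponent $(q_F+2(1-r))^{+}$ for $\nabla_x u$, and one further iteration produces the exponent $(q_F+3(1-r))^{+}$ for $\nabla_x\nabla_x u$.

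The principal obstacle is keeping the polynomial bookkeeping sharp as the iteration proceeds: each differentiation of the Poisson equation inflates both the growth of $u$ itself and the growth of the commutator $[\nabla_x,\opL_{i,x}]$, so one must verify at each stage that the new right-hand side still lies in the admissible class of part (iii), which relies essentially on having two $y$-derivatives with H\"older modulus for $f,g,\tau_1,\tau_2$. The positive-part correction $(\,\cdot\,)^{+}$ is not cosmetic: when $r$ is large enough that the nominal exponent would be negative, the Poisson equation regularizes rather than amplifies, and the solution saturates at a bounded function. This saturation must be encoded directly into the Lyapunov comparison functions (by truncating the exponent at zero before applying the comparison principle) rather than appearing after the fact; only then does the induction close and deliver the claimed exponents uniformly in $x$.
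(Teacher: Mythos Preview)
Your proposal is correct and follows essentially the same route as the paper: the paper's proof is purely citational, invoking Proposition~1 and Theorem~2 of \cite{PV01} and Theorems~1--3 of \cite{PV03}, with the single additional remark that the $x$-derivative bounds are obtained by differentiating the Poisson equation in $x$ and reapplying the \cite{PV01} estimate to the new equation---exactly the iteration you outline. Your sketch is in effect a reconstruction of what those references do (Lyapunov drift for moments and ergodicity, probabilistic representation plus comparison for the Poisson solution), so there is no substantive methodological difference, only a difference in the level of detail supplied.
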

\begin{proof}
(i) Conditions \ref{C:growth} and \ref{C:ergodic} imply that $\opL_{i, x}$ satisfies the conditions for Proposition 1 in \cite{PV01} and Theorem 1 in \cite{PV03}. The first statement is due to Proposition 1 in \cite{PV01} and the second statement is due to Theorem 1 in \cite{PV03}.

(ii) Conditions \ref{C:growth} and \ref{C:ergodic} and the condition on $G$ imply that Theorem 2 in \cite{PV03} holds, so (ii) holds.

(iii) Conditions \ref{C:growth} and \ref{C:ergodic} and the conditions on $F$ imply that Theorem 3 in \cite{PV03} holds, which implies the existence and smoothness of $u$. The corresponding growth conditions follow from Theorem 2 of \cite{PV01}, appropriately translated to our case. Notice that Theorem 2 of \cite{PV01} has a statement for the growth only for the solution and its $y-$derivative. The statements for the $x-$derivatives follow, for example, by differentiating the equation and re-applying Theorem 2 of \cite{PV01} to the new equation.
\end{proof}

\section{Lemmas for Regime 1}

\begin{lemma} \label{L:uBound}
Assume Conditions \ref{C:growth}, \ref{C:ergodic} and \ref{C:StrongSolution}. Let $(X_t^{\varepsilon, u^\varepsilon}, Y_t^{\varepsilon, u^\varepsilon})$ be the strong solution to \eqref{E:controlledSDE}. Then the infimum of the representation in \eqref{E:controlRepresentation} can be taken over all controls such that
\begin{equation*}
    \int_0^1  \lvert u^\varepsilon(s)  \rvert^2 \,ds < N, \text{ almost surely},
\end{equation*}
where the constant $N$ does not depend on $\varepsilon$ or $\delta$.
\end{lemma}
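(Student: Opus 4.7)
The plan is to run a standard stopping-time truncation that exploits the uniform boundedness of $a$. Since the admissible control $u\equiv 0$ already produces the crude bound
\[
I^\varepsilon:=-\frac{1}{h^2(\varepsilon)}\log\E\bigl[\exp\{-h^2(\varepsilon)a(\eta^\varepsilon)\}\bigr]\le\|a\|_\infty,
\]
any $\delta$-nearly optimal $u^\varepsilon\in\mathcal{A}$, for $\delta\in(0,1)$, satisfies
\[
\E\int_0^1|u^\varepsilon(s)|^2\,ds\;\le\;2\bigl(I^\varepsilon+\delta+\|a\|_\infty\bigr)\;\le\;4\|a\|_\infty+2\;=:\;C_0,
\]
a constant independent of $\varepsilon$ and $\delta$.

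I would then promote this expectation bound to an almost-sure bound by truncation. Set
\[
\tau_N^\varepsilon:=\inf\Bigl\{t\in[0,1]:\int_0^t|u^\varepsilon(s)|^2\,ds\ge N\Bigr\}\wedge 1,\qquad \tilde u^\varepsilon(s):=u^\varepsilon(s)\,1_{[0,\tau_N^\varepsilon]}(s).
\]
Since $\tau_N^\varepsilon$ is an $\mathscr{F}_t$-stopping time, $\tilde u^\varepsilon$ remains progressively measurable and hence lies in $\mathcal{A}$, while by construction $\int_0^1|\tilde u^\varepsilon|^2\,ds\le N$ almost surely. Markov's inequality yields $\p(\tau_N^\varepsilon<1)\le C_0/N$. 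On $\{\tau_N^\varepsilon=1\}$ we have $\tilde u^\varepsilon\equiv u^\varepsilon$, so pathwise uniqueness of \eqref{E:controlledSDE} (Condition \ref{C:StrongSolution}) forces $\eta^{\varepsilon,\tilde u^\varepsilon}=\eta^{\varepsilon,u^\varepsilon}$; on the complementary event, the difference $a(\eta^{\varepsilon,\tilde u^\varepsilon})-a(\eta^{\varepsilon,u^\varepsilon})$ is bounded in absolute value by $2\|a\|_\infty$. Together with $\int|\tilde u^\varepsilon|^2\le\int|u^\varepsilon|^2$ this gives the cost comparison
\[
\E\!\left[\tfrac12\int_0^1|\tilde u^\varepsilon|^2\,ds+a(\eta^{\varepsilon,\tilde u^\varepsilon})\right]\;\le\;I^\varepsilon+\delta+\frac{2\|a\|_\infty C_0}{N}.
\]

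Choosing $N$ depending only on $\|a\|_\infty$ (in particular independent of $\varepsilon$ and $\delta$) so that the last summand is at most $\delta$, and then sending $\delta\downarrow 0$, shows that the infimum over admissible controls subject to $\int_0^1|u|^2\,ds\le N$ almost surely coincides with the unrestricted infimum $I^\varepsilon$; replacing $N$ by $N+1$ converts $\le$ into the strict inequality demanded in the statement. The only points to verify are progressive measurability of $\tilde u^\varepsilon$ (immediate, as a stopping-time truncation of a progressively measurable process) and the existence of a unique strong solution to \eqref{E:controlledSDE} under the truncated control (Condition \ref{C:StrongSolution}). There is no substantive obstacle here: the argument is the same reduction used in the large-deviations analyses of \cite{DS12,S13} and is unaffected by the multiscale or moderate-deviations features, since it uses nothing about $b,c,\sigma,f,g,\tau_1,\tau_2$ beyond the well-posedness of the controlled SDE.
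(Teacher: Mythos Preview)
Your argument is correct and is exactly the standard reduction the paper invokes; the only difference is that the paper outsources the truncation step to the proof of Theorem~4.4 in \cite{BD00}, whereas you spell it out explicitly via the stopping time $\tau_N^\varepsilon$ and Markov's inequality. One purely notational point: you use $\delta$ both for the near-optimality parameter and (in ``independent of $\varepsilon$ and $\delta$'') for the time-scale separation parameter of the SDE, which makes the sentence about choosing $N$ slightly confusing---$N$ does depend on the near-optimality margin (as it must, and as in the paper's formula $N\ge 4\|a\|_\infty(4\|a\|_\infty+\zeta)/\zeta$), just not on $\varepsilon$ or on the model's $\delta=\delta(\varepsilon)$; renaming your near-optimality parameter to $\zeta$ would remove the ambiguity.
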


\begin{proof}
The proof is standard, but we recall it here for the readers convenience. Without loss of generality, we can consider a function $a(x)$ that is bounded and uniformly Lipschitz continuous in $\mathcal{C}([0, 1]; \mathbb{R}^n)$.
Namely, there exists a constant $L_a$ such that
\begin{equation*}
    \lvert a(x) - a(y) \rvert \le L_a \lVert x - y \rVert
\end{equation*}
(where $\lVert \cdot \rVert$ is the supremum norm) and $\lVert a \rVert_\infty = \sup_{x \in \mathcal{C}([0, 1]; \mathbb{R}^n)} \lvert a(x) \rvert < \infty$.

Fix $\zeta > 0$. There exists a family of controls $\{u^\varepsilon, \varepsilon >0 \}$ in $\mathcal{A}$ such that for every $\varepsilon > 0$,
\begin{equation*}
    - \frac{1}{h^2(\varepsilon)} \log \E \left[ \exp\{-h^2(\varepsilon)a(\eta^\varepsilon)\}\right] \ge \E \left[ \frac{1}{2} \int_0^1 \lvert u^\varepsilon(s)\rvert^2 \,ds + a(\eta^{\varepsilon,u^\varepsilon}) \right] - \zeta.
\end{equation*}

Then each control $u^\varepsilon$ satisfies
\begin{equation*}
    \sup_{\varepsilon > 0} \E \left[ \frac{1}{2} \int_0^1 \lvert u^\varepsilon(s) \rvert^2 \,ds \right] \le 2 \lVert a \rVert_\infty + \zeta.
\end{equation*}

By the proof of Theorem 4.4 in \cite{BD00}, it is enough to assume that for given $\zeta > 0$ the controls satisfy the bound
\begin{equation*}
    \int_0^1 \lvert u^\varepsilon(s) \rvert^2 \,ds < N
\text{ where }    N \ge \frac{4 \lVert a \rVert_\infty ( 4 \lVert a \rVert_\infty + \zeta )}{\zeta}.
\end{equation*}
\end{proof}

\begin{lemma} \label{L:Ygrowth}
Let Conditions \ref{C:growth}, \ref{C:ergodic}, \ref{C:Tightness}, and \ref{C:StrongSolution} be satisfied. For  $N\in\mathbb{N}$, let $u^{\varepsilon} \in \mathcal{A}$  such that almost surely
\begin{equation*}
    \sup_{\varepsilon > 0} \int_0^1 \lvert u^{\varepsilon}(s) \rvert^2 \,ds < N.
\end{equation*}
 Then, with $r>0$ from Condition \ref{C:ergodic} we have for any $T\leq 1$ there exist $\varepsilon_{0}>0$ small enough such that
 \begin{equation*}
    \sup_{\varepsilon\in(0,\varepsilon_{0})}\E  \int_{0}^{T} | Y_s^{\varepsilon, u^{\varepsilon}}|^{2r} \,ds  \le K(N,T,r)
\end{equation*}
for some finite constant $K(N,T,r)$ that may depend on $(N,T,r)$, but not on $\varepsilon,\delta(\varepsilon)$.
\end{lemma}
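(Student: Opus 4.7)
The plan is to apply It\^o's formula to the Lyapunov function $V(y) = (1+|y|^2)^{(r+1)/2}$, whose exponent is chosen precisely so that the dissipativity of $f$ produces a negative drift of order $(1+|y|^2)^r$. First I would compute $\nabla V(y) = (r+1)(1+|y|^2)^{(r-1)/2} y$, so that Condition \ref{C:ergodic}(ii) (Regime 1) gives, uniformly in $x \in \mathbb{R}^n$,
\begin{equation*}
\nabla V(y) \cdot f(x,y) \le -(r+1)\Gamma (1+|y|^2)^{(r-1)/2}|y|^{r+1} + \text{(bounded)} \le -c_1(1+|y|^2)^r + c_2.
\end{equation*}
Since $\tau_1\tau_1^\T+\tau_2\tau_2^\T$ is bounded (Condition \ref{C:ergodic}(i)), the second-order term satisfies $|(\tau_1\tau_1^\T+\tau_2\tau_2^\T):\nabla^2 V(y)| \le c_3(1+|y|^2)^{(r-1)/2}$, which is of lower order. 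Combining yields $\opL_{1,x}V(y) \le -c_4(1+|y|^2)^r + c_5$ uniformly in $x$.

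Applying It\^o's formula to $V(Y_t^{\varepsilon,u^\varepsilon})$ via \eqref{E:controlledSDE}, multiplying by $\delta^2/\varepsilon$, and using $|\nabla V(y)| \le C(1+|y|)^r$ together with the boundedness of $g,\tau_1,\tau_2$, I would obtain, for $T \le 1$,
\begin{align*}
&\frac{\delta^2}{\varepsilon}V(Y_T^{\varepsilon,u^\varepsilon}) + c_4\int_0^T (1+|Y_s^{\varepsilon,u^\varepsilon}|^2)^r\,ds \\
&\quad \le \frac{\delta^2}{\varepsilon}V(y_0) + c_5 T + \frac{C\delta}{\varepsilon}\int_0^T (1+|Y_s^{\varepsilon,u^\varepsilon}|)^r\,ds + \frac{C\delta h(\varepsilon)}{\sqrt{\varepsilon}}\int_0^T (1+|Y_s^{\varepsilon,u^\varepsilon}|)^r|u^\varepsilon(s)|\,ds + \tilde M_T,
\end{align*}
where $\tilde M$ is a local martingale.

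The key observation is that in Regime 1, $\delta/\varepsilon \to 0$ and $\sqrt{\varepsilon}h(\varepsilon) \to 0$, so $\delta h(\varepsilon)/\sqrt{\varepsilon} = (\delta/\varepsilon)\sqrt{\varepsilon}h(\varepsilon) \to 0$. Using Young's inequalities $a \le \eta a^2 + 1/(4\eta)$ and $a|u| \le \eta a^2 + |u|^2/(4\eta)$ with $a = (1+|Y|^2)^{r/2}$ and $\eta$ chosen so that the quadratic coefficient equals $c_4/4$, the two troublesome integrals on the right-hand side are dominated by
\begin{equation*}
\tfrac{c_4}{2}\int_0^T (1+|Y_s^{\varepsilon,u^\varepsilon}|^2)^r\,ds + \frac{C^2\delta^2}{c_4\varepsilon^2}T + \frac{C^2\delta^2 h^2(\varepsilon)}{c_4\varepsilon}\int_0^T |u^\varepsilon(s)|^2\,ds,
\end{equation*}
and the coefficients $\delta^2/\varepsilon^2$ and $\delta^2 h^2(\varepsilon)/\varepsilon = (\delta/\varepsilon)^2\varepsilon h^2(\varepsilon)$ both vanish as $\varepsilon \downarrow 0$, hence are bounded uniformly on $(0,\varepsilon_0]$ for $\varepsilon_0$ small.

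To handle $\tilde M$, I would localize with $\tau_n = \inf\{t : |Y_t^{\varepsilon,u^\varepsilon}|>n\}$, apply the bound up to $T\wedge\tau_n$, take expectations so $\E\tilde M_{T\wedge\tau_n} = 0$, absorb the $(1+|Y|^2)^r$ terms to the left, use $V \ge 0$ and the hypothesis $\int_0^1|u^\varepsilon|^2\,ds \le N$ a.s.\ to conclude
\begin{equation*}
\tfrac{c_4}{2}\E\int_0^{T\wedge\tau_n}(1+|Y_s^{\varepsilon,u^\varepsilon}|^2)^r\,ds \le K(N,T,r)
\end{equation*}
with $K$ independent of $n$ and $\varepsilon \in (0,\varepsilon_0)$; then Fatou's lemma gives the conclusion with $n \to \infty$, since $|y|^{2r} \le (1+|y|^2)^r$. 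The main obstacle I anticipate is bookkeeping of the small parameters $\delta/\varepsilon$, $\sqrt{\varepsilon}h(\varepsilon)$, and $\delta h(\varepsilon)/\sqrt{\varepsilon}$ in the Young's absorptions so that $K(N,T,r)$ remains bounded uniformly in $(\varepsilon,\delta)$; this is precisely where the defining assumptions of Regime 1 and of moderate deviations are used.
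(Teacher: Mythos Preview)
Your proposal is correct and follows essentially the same Lyapunov argument as the paper: apply It\^o's formula to a power-type function of order $r+1$, use the recurrence Condition~\ref{C:ergodic}(ii) to produce a negative drift of order $(1+|y|^2)^r$, and exploit the smallness of $\delta/\varepsilon$ and $\delta h(\varepsilon)/\sqrt{\varepsilon}$ in Regime~1 to absorb the lower-order perturbations coming from $g$ and from the control. The cosmetic differences---your smooth Lyapunov $(1+|y|^2)^{(r+1)/2}$ versus the paper's $|y|^{r+1}$, your exit-time localization $\tau_n$ versus the paper's entrance-time $\tau_R$, and your Young's-inequality absorption versus the paper's direct comparison---do not change the substance; if anything, the smooth Lyapunov and exit-time localization make your bookkeeping slightly cleaner.
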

\begin{proof}
By the Markov property it is enough to check what happens when the process $Y^{\varepsilon, u^{\varepsilon}}$ is outside a compact subset of $\mathcal{Y}$. For this purpose, with $R<\infty$ to be chosen, let us define
\[
\tau_{R}=\inf \left\{t>0: |Y^{\varepsilon, u^{\varepsilon}}_{t}|<R\right\}
\]
and assume that the initial condition is such that $|y_{0}|>R$. For notational convenience, we will write $(X,Y)$ instead of $(X^{\varepsilon,u^{\varepsilon}},Y^{\varepsilon,u^{\varepsilon}})$. Without loss of generality and for exposition purposes we shall set $g=0$ (since it is assumed to be bounded) and $\tau_{1}=0$ (the argument is exactly the same if both $\tau_{1}$ and $\tau_{2}$ are non-zero). By Condition \ref{C:ergodic} we have that uniformly in both $x$ and $y$ and for any constant $\beta$
\[
\frac{\beta-2}{2}\frac{\left<\tau_{2}\tau^{T}_{2}(x,y)y, y\right>}{|y|^{2}}+\frac{1}{2}
\textrm{Tr}(\tau_{2}\tau_{2}^{T})(x,y)\leq \rho,
\]
for some fixed constant $\rho>0$. Hence, considering $t\leq T$, the It\^{o} formula gives for $\beta>0$ (to be chosen)
\begin{align}
&\E|Y_{t\wedge \tau_{R}}|^{\beta}=|y_{0}|^{\beta}+\beta\frac{\varepsilon}{\delta^{2}}\E\int_{0}^{t\wedge\tau_{R}}|Y_{s}|^{\beta-2}
\left(\vphantom{\frac{\beta}{2}} \left<Y_{s},f(X_{s},Y_{s})\right>+\right.\nonumber\\
&\quad\left.+\frac{\beta-2}{2}\frac{\left<\tau_{2}\tau^{T}_{2}(X_{s},Y_{s})Y_{s}, Y_{s}\right>}{|Y_{s}|^{2}}+\frac{1}{2}
\textrm{Tr}(\tau_{2}\tau_{2}^{T})(X_{s},Y_{s})\right)ds\nonumber\\
&\qquad+\frac{\sqrt{\varepsilon} h(\varepsilon)}{\delta}\beta \E\int_{0}^{t\wedge\tau_{R}}|Y_{s}|^{\beta-2}
\left<Y_{s},\tau_{2}(X_{s},Y_{s})u^{\varepsilon}_{2}(s)\right>ds\nonumber\\
&\leq |y_{0}|^{\beta}+\beta\frac{\varepsilon}{\delta^{2}}\E\int_{0}^{t\wedge\tau_{R}}|Y_{s}|^{\beta-2}
\left(-\Gamma |Y_{s}|^{r+1}+\rho\right)ds+\nonumber\\
&+\frac{\sqrt{\varepsilon} h(\varepsilon)}{2\delta}\beta \E\int_{0}^{t\wedge\tau_{R}}\left(|Y_{s}|^{2\beta-2}+\left\Vert \tau_{2}\right\Vert^{2} |u^{\varepsilon}(s)|^{2}
\right)ds\nonumber
\end{align}
where Condition \ref{C:Tightness} was used.

Choosing now $R$ large enough such that $R^{1+r}>\frac{2\rho}{\Gamma}$ and recalling that
$\sup_{\varepsilon\in(0,1)}\E\int_{0}^{T}|u^{\varepsilon}(s)|^{2}ds\leq N$, we can continue the last inequality as follows
\begin{align}
\E|Y_{t\wedge \tau_{R}}|^{\beta}
&\leq |y_{0}|^{\beta}-\frac{\beta\Gamma}{2}\frac{\varepsilon}{\delta^{2}}\E\int_{0}^{t\wedge\tau_{R}}|Y_{s}|^{\beta+r-1}
ds+\frac{\sqrt{\varepsilon} h(\varepsilon)}{2\delta}\beta \left\Vert \tau_{2}\right\Vert^{2}  N \nonumber\\
&\qquad+\frac{\sqrt{\varepsilon} h(\varepsilon)}{2\delta}\beta  \E\int_{0}^{t\wedge\tau_{R}}|Y_{s}|^{2\beta-2}ds\nonumber
\end{align}
Choosing now $\beta\leq r+1$, we obtain
\begin{align}
\E|Y_{t\wedge \tau_{R}}|^{\beta}
&\leq |y_{0}|^{\beta}-\frac{\beta\Gamma}{2}\frac{\varepsilon}{\delta^{2}}\E\int_{0}^{t\wedge\tau_{R}}|Y_{s}|^{\beta+r-1}
ds+\frac{\sqrt{\varepsilon} h(\varepsilon)}{2\delta}\beta \left\Vert \tau_{2}\right\Vert^{2}  N \nonumber\\
&\qquad+\frac{\sqrt{\varepsilon} h(\varepsilon)}{2\delta}\beta \E\int_{0}^{t\wedge\tau_{R}}|Y_{s}|^{\beta+r-1}ds\nonumber
\end{align}
Choosing next $\varepsilon,\delta$ sufficiently small such that $\frac{\delta}{\varepsilon}\sqrt{\varepsilon} h(\varepsilon)<\Gamma/4$ we obtain
\begin{align}
\E|Y_{t\wedge \tau_{R}}|^{\beta}
&\leq |y_{0}|^{\beta}-\frac{\beta\Gamma}{4}\frac{\varepsilon}{\delta^{2}}\E\int_{0}^{t\wedge\tau_{R}}|Y_{s}|^{\beta+r-1}
ds+\frac{\sqrt{\varepsilon} h(\varepsilon)}{2\delta}\beta \left\Vert \tau_{2}\right\Vert^{2}  N \nonumber
\end{align}
which then gives by comparison
\begin{align}
\E\int_{0}^{t\wedge\tau_{R}}|Y_{s}|^{\beta+r-1}
ds
&\leq \frac{4}{\beta\Gamma}\frac{\delta^{2}}{\varepsilon}|y_{0}|^{\beta}+\frac{2}{\Gamma}\frac{\delta h(\varepsilon)}{\sqrt{\varepsilon}} \left\Vert \tau_{2}\right\Vert^{2}  N. \nonumber
\end{align}

Since $\beta\leq r+1$, we choose $\beta=r+1$, which concludes the proof of the lemma.
\end{proof}

\begin{lemma} \label{L:productBound}
Let Conditions \ref{C:growth} and \ref{C:ergodic} be satisfied. Let $N\in\mathbb{N}$ be finite and $u^\varepsilon = (u_1^\varepsilon, u_2^\varepsilon) \in \mathcal{A}$ such that almost surely
\begin{equation*}
    \sup_{\varepsilon > 0} \int_0^1  \lvert u^\varepsilon(s) \rvert^2  \,ds < N.
\end{equation*}
 Let $A(x, y)$ and $B(x,y)$ be matrix-valued functions and $K$, $\theta\in(0,r)$, where $r$ is as defined in Condition \ref{C:ergodic}, be constants such that each components $A_{ij}$ and $B_{ij}$ satisfy
\begin{equation*}
    |A_{ij}(x, y)| \le K (1 + \lvert y \rvert^\theta), \text{ and }|B_{ij}(x, y)| \le K (1 + \lvert y \rvert^{2\theta}).
\end{equation*}
Then for $\alpha \in \{1, 2\}$:
\begin{enumerate}[(i)]
\item For any $p\in(1,r/\theta]$, there exists a $C<\infty$ such that for fixed $\rho > 0$ and for all $0 \le t_1 < t_1+\rho \le 1$,
\begin{equation*}
    \E \sup_{\substack{0 \le t_1 < t_2 \le 1 \\ \lvert t_2 - t_1 \rvert < \rho}}\left\lvert \int_{t_1}^{t_{2}} A(X_s^{\varepsilon, u^\varepsilon}, Y_s^{\varepsilon, u^\varepsilon}) u_\alpha^\varepsilon(s) \,ds \right\rvert^{2p} \le C \lvert \rho \rvert^{r/\theta-1}.
\end{equation*}
and
\begin{equation*}
    \E \sup_{\substack{0 \le t_1 < t_2 \le 1 \\ \lvert t_2 - t_1 \rvert < \rho}}\left\lvert \int_{t_1}^{t_{2}} B(X_s^{\varepsilon, u^\varepsilon}, Y_s^{\varepsilon, u^\varepsilon})  \,ds \right\rvert^{p} \le C \lvert \rho \rvert^{r/\theta-1}.
\end{equation*}
\item For all $\zeta > 0$
\begin{equation*}
    \lim_{\rho \downarrow 0} \limsup_{\varepsilon \downarrow 0} \p \left[ \sup_{\substack{0 \le t_1 < t_2 \le 1 \\ \lvert t_2 - t_1 \rvert < \rho}} \left\lvert \int_{t_1}^{t_2} A(X_s^{\varepsilon, u^\varepsilon}, Y_s^{\varepsilon, u^\varepsilon}) u_\alpha^\varepsilon(s) \,ds \right\rvert > \zeta \right] = 0.
\end{equation*}
and
\begin{equation*}
    \lim_{\rho \downarrow 0} \limsup_{\varepsilon \downarrow 0} \p \left[ \sup_{\substack{0 \le t_1 < t_2 \le 1 \\ \lvert t_2 - t_1 \rvert < \rho}} \left\lvert \int_{t_1}^{t_2} B(X_s^{\varepsilon, u^\varepsilon}, Y_s^{\varepsilon, u^\varepsilon})  \,ds \right\rvert > \zeta \right] = 0.
\end{equation*}
\end{enumerate}
\end{lemma}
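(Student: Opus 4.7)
The plan is to reduce both integrals to moment bounds that can be controlled through the polynomial growth hypotheses on $A$ and $B$ together with the a priori estimate on $Y^{\varepsilon,u^\varepsilon}$ provided by Lemma \ref{L:Ygrowth}. Throughout, $p\theta\leq r$ is the key compatibility condition that makes Lemma \ref{L:Ygrowth} directly applicable.

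For the $A$-integral in part (i), I would first use the Cauchy-Schwarz inequality componentwise to separate the control from the coefficient, obtaining
\begin{equation*}
\left|\int_{t_1}^{t_2} A(X_s^{\varepsilon,u^\varepsilon},Y_s^{\varepsilon,u^\varepsilon})u_\alpha^\varepsilon(s)\,ds\right|^2 \leq \int_{t_1}^{t_2}|A(X_s^{\varepsilon,u^\varepsilon},Y_s^{\varepsilon,u^\varepsilon})|^2\,ds \cdot \int_{t_1}^{t_2}|u_\alpha^\varepsilon(s)|^2\,ds,
\end{equation*}
then apply the almost-sure bound $\int_0^1|u^\varepsilon|^2\,ds<N$ to the second factor. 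Raising to the $p$-th power and applying Jensen's inequality (power-mean form), the first factor is bounded by $(t_2-t_1)^{p-1}\int_{t_1}^{t_2}|A|^{2p}\,ds$, which is at most $\rho^{p-1}\int_0^1|A(X_s^{\varepsilon,u^\varepsilon},Y_s^{\varepsilon,u^\varepsilon})|^{2p}\,ds$. Taking the supremum and then expectations, and using $|A(x,y)|^{2p}\leq C(1+|y|^{2p\theta})$, I need a uniform-in-$\varepsilon$ bound on $\E\int_0^1 |Y_s^{\varepsilon,u^\varepsilon}|^{2p\theta}\,ds$. Since $p\theta\leq r$, two applications of Jensen (first to push the power inside the time integral, then to exchange expectation and root) reduce this to $\E\int_0^1|Y_s^{\varepsilon,u^\varepsilon}|^{2r}\,ds$, which is finite uniformly in $\varepsilon$ by Lemma \ref{L:Ygrowth}. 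Choosing $p=r/\theta$ gives the claimed exponent $\rho^{r/\theta-1}$.

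The $B$-integral is simpler because there is no control factor. I would apply Jensen directly to $\left(\int_{t_1}^{t_2}|B|\,ds\right)^p$, extract $\rho^{p-1}$ and reduce to $\E\int_0^1|B(X_s^{\varepsilon,u^\varepsilon},Y_s^{\varepsilon,u^\varepsilon})|^p\,ds$. The growth bound $|B|^p\leq C(1+|y|^{2p\theta})$ again lands within the regime covered by Lemma \ref{L:Ygrowth} since $p\theta\leq r$, so the same argument as before closes the estimate with the same power $\rho^{r/\theta-1}$ at $p=r/\theta$.

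Part (ii) is then immediate from Chebyshev's inequality: for the $A$-integral,
\begin{equation*}
\p\!\left[\sup_{|t_2-t_1|<\rho}\left|\int_{t_1}^{t_2}A u_\alpha^\varepsilon\,ds\right|>\zeta\right]\leq \zeta^{-2p}\,\E\sup_{|t_2-t_1|<\rho}\left|\int_{t_1}^{t_2}A u_\alpha^\varepsilon\,ds\right|^{2p}\leq C\zeta^{-2p}\rho^{r/\theta-1},
\end{equation*}
and analogously for the $B$-integral. Because $\theta\in(0,r)$ implies $r/\theta-1>0$, first sending $\varepsilon\downarrow 0$ (which is harmless since the constants are $\varepsilon$-uniform) and then $\rho\downarrow 0$ gives the conclusion. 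The only genuine subtlety in the whole argument is verifying that the exponent $p\theta$ on $|y|$ does not exceed $r$, so that Lemma \ref{L:Ygrowth} applies; this is precisely why the range $p\in(1,r/\theta]$ appears in the hypothesis, and it is what forces the condition $\theta<r$ in the statement.
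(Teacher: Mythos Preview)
Your proposal is correct and follows essentially the same approach as the paper: Cauchy--Schwarz to split off the control, the almost-sure bound $\int_0^1|u^\varepsilon|^2\,ds<N$, then H\"older/Jensen to extract the factor $\rho^{p-1}$ and reduce to a $2p\theta$-moment of $Y^{\varepsilon,u^\varepsilon}$, which is controlled by Lemma~\ref{L:Ygrowth} since $p\theta\leq r$; part~(ii) then follows from Markov's inequality. The only cosmetic difference is that the paper bounds $|Y_s|^{2p\theta}\le 1+|Y_s|^{2r}$ pointwise rather than invoking Jensen twice, but the content is the same.
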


\begin{proof}
We shall only prove the statement for $A(x,y)$, as the proof for $B(x,y)$ is the same but simpler. Applying H\"{o}lder inequality with  $1/m+1/q=1$ and $q=p>1$ gives
\begin{align*}
&\E \sup_{\substack{0 \le t_1 < t_2 \le 1 \\ \lvert t_2 - t_1 \rvert < \rho}}\left\lvert \int_{t_1}^{t_2} A(X_s^{\varepsilon, u^\varepsilon}, Y_s^{\varepsilon, u^\varepsilon}) u_\alpha^\varepsilon(s) \,ds \right\rvert^{2p}\nonumber\\
 &\quad \le \E \sup_{\substack{0 \le t_1 < t_2 \le 1 \\ \lvert t_2 - t_1 \rvert < \rho}}\left( \int_{t_1}^{t_2} \left|A(X_s^{\varepsilon, u^\varepsilon}, Y_s^{\varepsilon, u^\varepsilon})\right|^{2}ds\right)^{p} \left( \int_{t_1}^{t_2} \left|u_\alpha^\varepsilon(s)\right|^{2} ds \right)^{p}\nonumber\\
&\quad \le N^{p} \E \sup_{\substack{0 \le t_1 < t_2 \le 1 \\ \lvert t_2 - t_1 \rvert < \rho}}\left( \int_{t_1}^{t_2} \left|A(X_s^{\varepsilon, u^\varepsilon}, Y_s^{\varepsilon, u^\varepsilon})\right|^{2}ds\right)^{p} \nonumber\\
&\quad \le N^{p} \rho^{p/m}\E \sup_{\substack{0 \le t_1 < t_2 \le 1 \\ \lvert t_2 - t_1 \rvert < \rho}}\left( \int_{t_1}^{t_2} \left|A(X_s^{\varepsilon, u^\varepsilon}, Y_s^{\varepsilon, u^\varepsilon})\right|^{2q}ds\right)^{p/q} \nonumber\\
&\quad \le N^{p} \rho^{p/m}\E \sup_{\substack{0 \le t_1 < t_2 \le 1 \\ \lvert t_2 - t_1 \rvert < \rho}}\left( \int_{t_1}^{t_2} \left|A(X_s^{\varepsilon, u^\varepsilon}, Y_s^{\varepsilon, u^\varepsilon})\right|^{2p}ds\right) \nonumber\\
&\quad \le N^{p} \rho^{p/m}\E \sup_{\substack{0 \le t_1 < t_2 \le 1 \\ \lvert t_2 - t_1 \rvert < \rho}} \int_{t_1}^{t_2} \left(1+\left|Y_s^{\varepsilon, u^\varepsilon}\right|^{2\theta p}\right)ds \nonumber\\
&\quad \le N^{p} \rho^{p-1}\E  \int_{0}^{1} \left(1+\left|Y_s^{\varepsilon, u^\varepsilon}\right|^{2r}\right)ds \nonumber
\end{align*}
and the result follows by Lemma \ref{L:Ygrowth} and by the choice of $p$.

The second claim follows from the first statement and Markov's inequality.
\end{proof}

\begin{lemma}\label{L:coefs}
Assume Conditions \ref{C:growth}, \ref{C:ergodic}, \ref{C:Tightness} and \ref{C:center} and define the function $\chi(x, y)$ by \eqref{E:cell} and the processes $X^{\varepsilon, u^\varepsilon}$ and $Y^{\varepsilon, u^\varepsilon}$ by \eqref{E:controlledSDE}. Let $N<\infty$ such that almost surely
\begin{equation*}
  \sup_{\varepsilon > 0}  \int_0^1 \lvert u^\varepsilon(s) \rvert^2 \,ds < N.
\end{equation*}

We have
\begin{enumerate}[(i)]
\item
\begin{align*}
  &\E \sup_{t \in [0, 1]} \left\lvert \int_0^t \frac{ \frac{\varepsilon}{\delta} b(X_s^{\varepsilon,u^\varepsilon}, Y_s^{\varepsilon,u^\varepsilon}) + c(X_s^{\varepsilon,u^\varepsilon}, Y_s^{\varepsilon,u^\varepsilon}) - \lambda_1(X_s^{\varepsilon,u^\varepsilon}, Y_s^{\varepsilon,u^\varepsilon}) }{\sqrt{\varepsilon}h(\varepsilon)} \,ds \right. \\
  \notag & \left. - \int_0^t \big( \nabla_y \chi(X_s^{\varepsilon,u^\varepsilon}, Y_s^{\varepsilon,u^\varepsilon}) \big) \left[ \tau_1(X_s^{\varepsilon,u^\varepsilon}, Y_s^{\varepsilon,u^\varepsilon}) u_1^\varepsilon(s) + \tau_2(X_s^{\varepsilon,u^\varepsilon}, Y_s^{\varepsilon,u^\varepsilon}) u_2^\varepsilon(s) \right] \,ds \right\rvert^2 \\
  \notag & < ( \delta C_1)^2 + \left(\frac{1}{h(\varepsilon)} C_2 \right)^2   + o \left( \delta^2 + \frac{1}{h^2(\varepsilon)} \right)
\end{align*}

\item For every $\zeta > 0$,
\begin{multline} \label{E:probLim}
  \lim_{\rho \downarrow 0} \limsup_{\varepsilon \downarrow 0} \p \left[ \sup_{\substack{0 \le t_1 < t_2 \le 1 \\ \lvert t_2 - t_1 \rvert < \rho}} \left\lvert \int_{t_1}^{t_2} \frac{1}{\sqrt{\varepsilon} h(\varepsilon)} \left( \frac{\varepsilon}{\delta}b(X_s^{\varepsilon, u^\varepsilon}, Y_s^{\varepsilon, u^\varepsilon} ) \right. \right. \right. \\
  \left.\vphantom{\sup_{\substack{0 \le t_1 < t_2 \le 1 \\ \lvert t_2 - t_1 \rvert < \rho}}} \left.\vphantom{\int_{t_1}^{t_2}} \left.\vphantom{\frac{\varepsilon}{\delta}} + c(X_s^{\varepsilon, u^\varepsilon}, Y_s^{\varepsilon, u^\varepsilon} ) - \lambda_1( X_s^{\varepsilon, u^\varepsilon}, Y_s^{\varepsilon, u^\varepsilon}) \right) \,ds \right\rvert > \zeta \right] = 0 .
\end{multline}

\item There exists an $M > 0$ such that for all sufficiently small $\varepsilon$,
\begin{equation} \label{E:expBound}
  \E \sup_{t\in[0,1]}\left\lvert \int_0^t \frac{ \frac{\varepsilon}{\delta}b(X_s^{\varepsilon, u^\varepsilon}, Y_s^{\varepsilon, u^\varepsilon} ) + c(X_s^{\varepsilon, u^\varepsilon}, Y_s^{\varepsilon, u^\varepsilon} ) - \lambda_1( X_s^{\varepsilon, u^\varepsilon}, Y_s^{\varepsilon, u^\varepsilon}) }{\sqrt{\varepsilon} h(\varepsilon)} \,ds \right\rvert^2 < M .
\end{equation}
\end{enumerate}
\end{lemma}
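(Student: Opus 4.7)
The plan is to apply It\^o's formula to $\chi(X_t^{\varepsilon,u^\varepsilon}, Y_t^{\varepsilon,u^\varepsilon})$, with $\chi$ the solution of the cell problem \eqref{E:cell}, and to exploit $\opL_{1,x}\chi = -b$ to cancel the leading $\varepsilon/\delta^2$-order drift. Multiplying the resulting identity by $\delta$, dividing by $\sqrt{\varepsilon}h(\varepsilon)$, and using $\lambda_1 = (\nabla_y\chi)g + c$, one arrives at the decomposition
\begin{align*}
&\int_0^t \frac{\tfrac{\varepsilon}{\delta}b + c - \lambda_1}{\sqrt{\varepsilon}h(\varepsilon)}\,ds - \int_0^t (\nabla_y\chi)[\tau_1 u_1^\varepsilon + \tau_2 u_2^\varepsilon]\,ds \\
&\quad = -\frac{\delta}{\sqrt{\varepsilon}h(\varepsilon)}\bigl[\chi(X_t^{\varepsilon,u^\varepsilon},Y_t^{\varepsilon,u^\varepsilon}) - \chi(x_0,y_0)\bigr] + \frac{1}{h(\varepsilon)}\int_0^t (\nabla_y\chi)[\tau_1\,dW_s + \tau_2\,dB_s] \\
&\qquad\quad + \delta\int_0^t (\nabla_x\chi)\sigma u_1^\varepsilon\,ds + \mathcal{R}^\varepsilon_t,
\end{align*}
in which $\mathcal{R}^\varepsilon_t$ is a sum of drift and It\^o integrals of $X$-derivatives of $\chi$ with deterministic prefactors $\sqrt{\varepsilon}/h(\varepsilon)$, $\delta/[\sqrt{\varepsilon}h(\varepsilon)]$, $\sqrt{\varepsilon}\delta/h(\varepsilon)$, or $\delta/h(\varepsilon)$; each of these, once squared, is $o(\delta^2 + 1/h^2(\varepsilon))$ because $\sqrt{\varepsilon}h(\varepsilon) \to 0$ and $\delta/\varepsilon \lesssim \sqrt{\varepsilon}h(\varepsilon)$ (the assumption $j_1<\infty$).

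Part (i) is then a bookkeeping of orders in the four terms above. The constraint $j_1<\infty$ gives $\delta/[\sqrt{\varepsilon}h(\varepsilon)] = O(\varepsilon)$, and $\varepsilon h^2(\varepsilon) = (\sqrt{\varepsilon}h(\varepsilon))^2 \to 0$, so the boundary piece is $O(\varepsilon^2) = o(\delta^2 + 1/h^2(\varepsilon))$; the Burkholder--Davis--Gundy inequality applied to the stochastic integral produces the $(C_2/h(\varepsilon))^2$ piece; Cauchy--Schwarz combined with $\int|u^\varepsilon|^2\,ds \le N$ produces the $(C_1\delta)^2$ piece; and $\E\sup_t|\mathcal{R}^\varepsilon_t|^2 = o(\delta^2 + 1/h^2(\varepsilon))$ by direct prefactor inspection. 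Part (ii) uses the same decomposition on $[t_1,t_2]$, but now without subtracting the controlled-drift integral: the un-cancelled $\int_{t_1}^{t_2}(\nabla_y\chi)[\tau_1 u_1^\varepsilon + \tau_2 u_2^\varepsilon]\,ds$, the $\delta\int(\nabla_x\chi)\sigma u_1^\varepsilon\,ds$ contribution, and every drift term inside $\mathcal{R}^\varepsilon$ vanish in the sense of the statement by Lemma \ref{L:productBound}(ii), while the boundary and stochastic-integral pieces vanish by Markov's and Doob's inequalities combined with their $\varepsilon$-prefactors. Part (iii) follows from (i) together with Lemma \ref{L:productBound}(i) applied on $[0,1]$ to bound $\E\sup_t|\int_0^t(\nabla_y\chi)[\tau_1 u_1^\varepsilon+\tau_2 u_2^\varepsilon]\,ds|^2$, via $(a+b)^2 \le 2a^2 + 2b^2$.

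The hard part is controlling the $\chi$-moments uniformly in $\varepsilon$. All of $\chi$, $\nabla_y\chi$, $\nabla_x\chi$, and $\partial_{xx}\chi$ grow polynomially in $|y|$ with exponents of the form $q_b$ and $(q_b + k(1-r))^+$, $k=1,2,3$, according to Theorem \ref{T:regularity}; Condition \ref{C:Tightness} is precisely the arithmetic restriction that keeps every such exponent strictly below $r$. The integral-in-time moment $\E\int_0^1|Y_t^{\varepsilon,u^\varepsilon}|^{2\theta}\,dt$ of Lemma \ref{L:Ygrowth} handles the drift-type integrals directly, but the boundary term requires the sup moment $\E\sup_{t\le 1}|Y_t^{\varepsilon,u^\varepsilon}|^{2\theta}$; the latter is obtained by the same $|y|^\beta$-Lyapunov argument as in Lemma \ref{L:Ygrowth}, now keeping the quadratic-variation martingale on the right-hand side and invoking Burkholder--Davis--Gundy, which produces an $\varepsilon$-uniform sup moment for every $\theta<r$.
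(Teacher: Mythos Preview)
Your approach is essentially the paper's: apply It\^o to $\chi(X^{\varepsilon,u^\varepsilon},Y^{\varepsilon,u^\varepsilon})$, use $\opL_{1,x}\chi=-b$ to kill the $\varepsilon/\delta^2$ drift, and read off the decomposition \eqref{E:coefsExpand}; part (ii) then hinges on Lemma~\ref{L:productBound} for the surviving $\int(\nabla_y\chi)[\tau_1u_1^\varepsilon+\tau_2u_2^\varepsilon]\,ds$, and part (iii) is the triangle inequality over the same pieces. Your order-tracking is a bit more explicit than the paper's, but the skeleton is identical.

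One caution: your last paragraph asserts an $\varepsilon$-\emph{uniform} bound on $\E\sup_{t\le1}|Y_t^{\varepsilon,u^\varepsilon}|^{2\theta}$ via a BDG-augmented Lyapunov argument. That is too strong as stated. The fast process lives on time scale $\varepsilon/\delta^2\to\infty$, and even in the Ornstein--Uhlenbeck case the sup over $[0,1]$ behaves like the sup of a stationary diffusion over $[0,\varepsilon/\delta^2]$, whose moments grow (typically logarithmically) in $\varepsilon/\delta^2$; the $|y|^\beta$-Lyapunov computation does not prevent this. What actually saves the boundary term is that its prefactor $\bigl(\delta/(\sqrt{\varepsilon}h(\varepsilon))\bigr)^2=O(\varepsilon^2)$, and $\varepsilon^2=o(1/h^2(\varepsilon))$ since $\varepsilon^2h^2(\varepsilon)=\varepsilon\cdot(\sqrt{\varepsilon}h(\varepsilon))^2\to0$; this decay comfortably dominates any sub-polynomial growth of the sup moment. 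The paper's proof is silent on this point (it simply cites Lemmas~\ref{L:Ygrowth} and~\ref{L:productBound}), so you are not missing anything relative to the paper, but the uniformity claim should be replaced by the prefactor-versus-growth balance just described.
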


\begin{proof}
(i). Applying the It\^{o} formula to $\chi(X_t^{\varepsilon, u^\varepsilon}, Y_t^{\varepsilon, u^\varepsilon})$ and rearranging gives
\begin{align} \label{E:coefsExpand}
  &\frac{1}{\sqrt{\varepsilon}h(\varepsilon)} \int_0^t \left[ \frac{\varepsilon}{\delta} b(X_s^{\varepsilon,u^\varepsilon}, Y_s^{\varepsilon,u^\varepsilon}) + c(X_s^{\varepsilon,u^\varepsilon}, Y_s^{\varepsilon,u^\varepsilon}) - \lambda_1(X_s^{\varepsilon,u^\varepsilon}, Y_s^{\varepsilon,u^\varepsilon}) \right] \,ds \\
  \notag &= - \frac{\delta}{\sqrt{\varepsilon}h(\varepsilon)} \left( \chi(X_t^{\varepsilon,u^\varepsilon}, Y_t^{\varepsilon,u^\varepsilon}) - \chi(x_0, y_0) \right) \\
  \notag &+ \frac{\delta}{\sqrt{\varepsilon}h(\varepsilon)} \int_0^t \big( \nabla_x \chi(X_s^{\varepsilon,u^\varepsilon}, Y_s^{\varepsilon,u^\varepsilon}) \big) \left[ \frac{\varepsilon}{\delta} b(X_s^{\varepsilon,u^\varepsilon}, Y_s^{\varepsilon,u^\varepsilon}) + c(X_s^{\varepsilon,u^\varepsilon}, Y_s^{\varepsilon,u^\varepsilon}) \right] \,ds \\
  \notag &+ \delta \int_0^t \big( \nabla_x \chi(X_s^{\varepsilon,u^\varepsilon}, Y_s^{\varepsilon,u^\varepsilon}) \big) \sigma(X_s^{\varepsilon,u^\varepsilon}, Y_s^{\varepsilon,u^\varepsilon}) u_1^\varepsilon(s) \,ds \\
  \notag &+ \int_0^t \big( \nabla_y \chi(X_s^{\varepsilon,u^\varepsilon}, Y_s^{\varepsilon,u^\varepsilon}) \big) \left[ \tau_1(X_s^{\varepsilon,u^\varepsilon}, Y_s^{\varepsilon,u^\varepsilon}) u_1^\varepsilon(s) + \tau_2(X_s^{\varepsilon,u^\varepsilon}, Y_s^{\varepsilon,u^\varepsilon}) u_2^\varepsilon(s) \right] \,ds \\
  \notag &+ \frac{\delta \sqrt{\varepsilon}}{2h(\varepsilon)} \int_0^t \sigma \sigma^\T (X_s^{\varepsilon,u^\varepsilon}, Y_s^{\varepsilon,u^\varepsilon}) : \nabla_x \nabla_x \chi(X_s^{\varepsilon,u^\varepsilon}, Y_s^{\varepsilon,u^\varepsilon}) \,ds \\
  \notag &+ \frac{\delta}{h(\varepsilon)} \int_0^t \left( \big( \nabla_x \chi(X_s^{\varepsilon,u^\varepsilon}, Y_s^{\varepsilon,u^\varepsilon}) \big) \sigma(X_s^{\varepsilon,u^\varepsilon}, Y_s^{\varepsilon,u^\varepsilon}) + \frac{1}{\delta} \big( \nabla_y \chi(X_s^{\varepsilon,u^\varepsilon}, Y_s^{\varepsilon,u^\varepsilon}) \big) \tau_1(X_s^{\varepsilon,u^\varepsilon}, Y_s^{\varepsilon,u^\varepsilon}) \right) \,dW_s \\
  \notag &+ \frac{1}{h(\varepsilon)} \int_0^t \big( \nabla_y \chi(X_s^{\varepsilon,u^\varepsilon}, Y_s^{\varepsilon,u^\varepsilon}) \big) \tau_2(X_s^{\varepsilon,u^\varepsilon}, Y_s^{\varepsilon,u^\varepsilon}) \,dB_s .
\end{align}

Using Lemmas \ref{L:Ygrowth} and \ref{L:productBound} and Doob's martingale inequality, along with the facts that the integrands that appear in the previous display  grow no more than polynomially in $|y|^{r}$ (Condition \ref{C:Tightness} is being used here), we have
\begin{align*}
  &\E \sup_{t \in [0, 1]} \left\lvert \int_0^t \frac{ \frac{\varepsilon}{\delta} b(X_s^{\varepsilon,u^\varepsilon}, Y_s^{\varepsilon,u^\varepsilon}) + c(X_s^{\varepsilon,u^\varepsilon}, Y_s^{\varepsilon,u^\varepsilon}) - \lambda_1(X_s^{\varepsilon,u^\varepsilon}, Y_s^{\varepsilon,u^\varepsilon}) }{\sqrt{\varepsilon}h(\varepsilon)} \,ds \right. \\
  \notag & \left. - \int_0^t \big( \nabla_y \chi(X_s^{\varepsilon,u^\varepsilon}, Y_s^{\varepsilon,u^\varepsilon}) \big) \left[ \tau_1(X_s^{\varepsilon,u^\varepsilon}, Y_s^{\varepsilon,u^\varepsilon}) u_1^\varepsilon(s) + \tau_2(X_s^{\varepsilon,u^\varepsilon}, Y_s^{\varepsilon,u^\varepsilon}) u_2^\varepsilon(s) \right] \,ds \right\rvert^2 \\
  \notag & < ( \delta C_1)^2 + \left(\frac{1}{h(\varepsilon)} C_2 \right)^2 + o \left( \delta^2 + \frac{1}{h^2(\varepsilon)} \right)
\end{align*}
where  the constants $C_1, C_2 < \infty$ do not depend on $\varepsilon$.

(ii). Separate the integral in \eqref{E:probLim} as in \eqref{E:coefsExpand}, and then most terms go to zero in probability as $\varepsilon$ goes to zero. The only exception is the term
\begin{equation*}
\int_{t_1}^{t_2} \big( \nabla_y \chi(X_s^{\varepsilon,u^\varepsilon}, Y_s^{\varepsilon,u^\varepsilon}) \big) \left[ \tau_1(X_s^{\varepsilon,u^\varepsilon}, Y_s^{\varepsilon,u^\varepsilon}) u_1^\varepsilon(s) + \tau_2(X_s^{\varepsilon,u^\varepsilon}, Y_s^{\varepsilon,u^\varepsilon}) u_2^\varepsilon(s) \right] \,ds .
\end{equation*}

Condition \ref{C:Tightness} and Theorem \ref{T:regularity} imply that Lemma \ref{L:productBound} can be applied, concluding the proof of the statement.

(iii). Rewrite \eqref{E:expBound} as in \eqref{E:coefsExpand} and use the triangle inequality. Take expectations, and for fixed $\varepsilon$, all terms are bounded by Lemmas \ref{L:uBound}, \ref{L:Ygrowth}, and \ref{L:productBound} and Doob's martingale inequality.
\end{proof}

\begin{lemma}\label{L:lambda}
Assume Conditions \ref{C:growth}, \ref{C:ergodic}, \ref{C:Tightness} and \ref{C:center} and define  the processes $X^{\varepsilon, u^\varepsilon}$ and $Y^{\varepsilon, u^\varepsilon}$ by \eqref{E:controlledSDE}. Let $N<\infty$ such that almost surely
\begin{equation*}
  \sup_{\varepsilon > 0}  \int_0^1 \lvert u^\varepsilon(s) \rvert^2 \,ds < N.
\end{equation*}
Also, define the function $\Phi_1(x, y)$ by \eqref{E:Phi}. Then
\begin{enumerate}[(i)]
\item
\begin{align*}
    &\E \sup_{t \in [0,1]} \left\lvert \frac{1}{\sqrt{\varepsilon} h(\varepsilon)} \int_0^t \left( \lambda_1( X_s^{\varepsilon, u^\varepsilon}, Y_s^{\varepsilon, u^\varepsilon} ) - \bar{\lambda}_1(X_s^{\varepsilon, u^\varepsilon}) \right) \,ds  \right. \\
    & \left. {} - \frac{\delta / \varepsilon}{ \sqrt{\varepsilon} h(\varepsilon)} \int_0^t \big( \nabla_y \Phi_1 ( X_s^{\varepsilon, u^\varepsilon}, Y_s^{\varepsilon, u^\varepsilon} ) \big) g( X_s^{\varepsilon, u^\varepsilon}, Y_s^{\varepsilon, u^\varepsilon} ) \,ds \right\rvert^2 \le \left( \frac{\delta}{\sqrt{\varepsilon} h(\varepsilon)} C \right)^2 + o\left(\frac{\delta^2}{\varepsilon h^2(\varepsilon)} \right)
\end{align*}
where the constant $C < \infty$ does not depend on the choice of $\varepsilon$.
\item For every $\zeta > 0$,
\begin{equation*} \notag
  \lim_{\rho \downarrow 0} \limsup_{\varepsilon \downarrow 0} \p \left[ \sup_{\substack{0 \le t_1 < t_2 \le 1 \\ \lvert t_2 - t_1 \rvert < \rho}} \left\lvert \frac{1}{\sqrt{\varepsilon} h(\varepsilon)} \int_{t_1}^{t_2} \left( \lambda_1( X_s^{\varepsilon, u^\varepsilon}, Y_s^{\varepsilon, u^\varepsilon} ) - \bar{\lambda}_1(X_s^{\varepsilon, u^\varepsilon}) \right) ds \right\rvert > \zeta \right] = 0 .
\end{equation*}
\item There exists an $M > 0$ such that for all sufficiently small $\varepsilon$,
\begin{equation*} \notag
  \E\sup_{t\in[0,1]}\left\lvert \frac{1}{\sqrt{\varepsilon} h(\varepsilon)} \int_0^t \left( \lambda_1( X_s^{\varepsilon, u^\varepsilon}, Y_s^{\varepsilon, u^\varepsilon} ) - \bar{\lambda}_1(X_s^{\varepsilon, u^\varepsilon}) \right) ds \right\rvert^2 < M.
\end{equation*}
\end{enumerate}
\end{lemma}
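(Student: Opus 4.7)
The plan is to derive all three statements from a single It\^o decomposition of $\Phi_1(X_t^{\varepsilon,u^\varepsilon},Y_t^{\varepsilon,u^\varepsilon})$ based on the Poisson equation \eqref{E:Phi}. Since $\opL_{1,x}\Phi_1(x,y) = -(\lambda_1(x,y) - \bar\lambda_1(x))$, the contribution to the generator of $(X^{\varepsilon,u^\varepsilon},Y^{\varepsilon,u^\varepsilon})$ that carries the prefactor $\varepsilon/\delta^2$ is exactly $-\tfrac{\varepsilon}{\delta^2}(\lambda_1 - \bar\lambda_1)$. Solving the resulting It\^o identity for $\int_0^t(\lambda_1-\bar\lambda_1)\,ds$, dividing by $\sqrt\varepsilon h(\varepsilon)$, and isolating from the remaining drift the single contribution $\tfrac{1}{\delta}\nabla_y\Phi_1\cdot g$ (which matches the term being subtracted on the left-hand side of (i)), one obtains
\begin{align*}
  &\frac{1}{\sqrt\varepsilon h(\varepsilon)}\int_0^t(\lambda_1 - \bar\lambda_1)\,ds - \frac{\delta/\varepsilon}{\sqrt\varepsilon h(\varepsilon)}\int_0^t (\nabla_y\Phi_1)\, g\,ds \\
  &\qquad= -\frac{\delta^2/\varepsilon}{\sqrt\varepsilon h(\varepsilon)}\bigl[\Phi_1(X_t^{\varepsilon,u^\varepsilon},Y_t^{\varepsilon,u^\varepsilon}) - \Phi_1(x_0,y_0)\bigr] + \frac{\delta^2/\varepsilon}{\sqrt\varepsilon h(\varepsilon)}\int_0^t \widetilde{\mathcal{R}}_{\Phi_1}(s)\,ds + \frac{\delta^2/\varepsilon}{\sqrt\varepsilon h(\varepsilon)} M_t^{\Phi_1},
\end{align*}
where $\widetilde{\mathcal{R}}_{\Phi_1}$ collects
\[
  \nabla_x\Phi_1 \cdot \bigl[\tfrac{\varepsilon}{\delta}b + c + \sqrt\varepsilon h(\varepsilon)\sigma u_1^\varepsilon\bigr] + \tfrac{\sqrt\varepsilon h(\varepsilon)}{\delta}\nabla_y\Phi_1 \cdot (\tau_1 u_1^\varepsilon + \tau_2 u_2^\varepsilon) + \tfrac{\varepsilon}{2}\sigma\sigma^\T \!:\nabla_x\nabla_x\Phi_1 + \tfrac{\varepsilon}{\delta}\sigma\tau_1^\T \!:\nabla_x\nabla_y\Phi_1,
\]
and $M_t^{\Phi_1} = \int_0^t[\sqrt\varepsilon \nabla_x\Phi_1\sigma + \tfrac{\sqrt\varepsilon}{\delta}\nabla_y\Phi_1\tau_1]\,dW_s + \int_0^t\tfrac{\sqrt\varepsilon}{\delta}\nabla_y\Phi_1\tau_2\,dB_s$.

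For part (i) I would then estimate each summand on the right-hand side in $L^2$-norm of the supremum after inserting the overall multiplier $\tfrac{\delta^2/\varepsilon}{\sqrt\varepsilon h(\varepsilon)}$. The drifts not involving $u^\varepsilon$ are handled by Cauchy--Schwarz together with the polynomial bounds on $\Phi_1,\nabla_x\Phi_1,\nabla_y\Phi_1,\nabla_x\nabla_x\Phi_1,\nabla_x\nabla_y\Phi_1$ supplied by Theorem \ref{T:regularity}, combined with the moment estimate $\E\int_0^1|Y_s^{\varepsilon,u^\varepsilon}|^{2r}\,ds \le K$ from Lemma \ref{L:Ygrowth}; the drift summands coupled to the controls $u_1^\varepsilon,u_2^\varepsilon$ are controlled by Lemma \ref{L:productBound}; and the martingale is handled by Doob's inequality and the Burkholder--Davis--Gundy inequality, which again reduces matters to the $Y$-moment bound. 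Using the definitions $j_1 = \lim\tfrac{\delta/\varepsilon}{\sqrt\varepsilon h(\varepsilon)}<\infty$, $\sqrt\varepsilon h(\varepsilon)\to 0$, and the Regime 1 relation $\delta/\varepsilon\to 0$, every prefactor is seen to be of order $\tfrac{\delta}{\sqrt\varepsilon h(\varepsilon)}$ or strictly smaller, yielding the claimed bound.

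Parts (ii) and (iii) follow from the same decomposition. For (ii), the integrals of the terms in $\widetilde{\mathcal{R}}_{\Phi_1}$ over a window of length $\rho$ are either directly controlled by Lemma \ref{L:productBound} (for terms containing $u^\varepsilon$) or made small by the uniform continuity of $s \mapsto \int_0^s \widetilde{\mathcal{R}}_{\Phi_1}(r)\,dr$ together with Lemma \ref{L:Ygrowth}, while the martingale increment is handled by Doob's inequality; the overall vanishing prefactor $\tfrac{\delta^2/\varepsilon}{\sqrt\varepsilon h(\varepsilon)}\to 0$ then closes the estimate. Part (iii) is immediate from (i) and the triangle inequality once one bounds $\tfrac{\delta/\varepsilon}{\sqrt\varepsilon h(\varepsilon)}\int_0^t(\nabla_y\Phi_1)\,g\,ds$ in $L^2$ by means of the polynomial bound for $\nabla_y\Phi_1\cdot g$ from Theorem \ref{T:regularity} and Lemma \ref{L:Ygrowth}.

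The main obstacle will be the book-keeping inside part (i): the polynomial growth exponents $(q_F+1-r)^+$, $(q_F+2(1-r))^+$ and $(q_F+3(1-r))^+$ that Theorem \ref{T:regularity} assigns to $\Phi_1$ and its derivatives, when multiplied by the $|y|^{q_b},|y|^{q_c},|y|^{q_\sigma}$ growth of the coefficients, must fit inside the $|y|^{2r}$ envelope on which Lemma \ref{L:Ygrowth} and Lemma \ref{L:productBound} provide uniform moment control. Verifying this compatibility is precisely what Condition \ref{C:Tightness} is designed for, and each summand in $\widetilde{\mathcal{R}}_{\Phi_1}$, as well as each integrand appearing in the quadratic variation of $M_t^{\Phi_1}$, has to be matched individually against the inequalities collected there.
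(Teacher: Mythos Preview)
Your proposal is correct and follows essentially the same route as the paper: apply It\^o's formula to $\Phi_1(X_t^{\varepsilon,u^\varepsilon},Y_t^{\varepsilon,u^\varepsilon})$, solve for the $\lambda_1-\bar\lambda_1$ integral, and bound each residual term using Theorem~\ref{T:regularity}, Lemmas~\ref{L:Ygrowth} and~\ref{L:productBound}, and Doob's inequality. Your expansion is in fact slightly more complete than the paper's displayed formula, since you retain the mixed cross-variation contribution $\tfrac{\varepsilon}{\delta}\,\sigma\tau_1^\T:\nabla_x\nabla_y\Phi_1$; one small point to make explicit in part~(ii) is that the term $\tfrac{\delta/\varepsilon}{\sqrt\varepsilon h(\varepsilon)}\int_{t_1}^{t_2}(\nabla_y\Phi_1)\,g\,ds$, whose prefactor tends to $j_1$ rather than to zero, must itself be shown to vanish as $\rho\downarrow 0$ via Lemma~\ref{L:productBound}, exactly as the paper does.
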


\begin{proof}

(i). Note that
\begin{equation*}
  \int_\mathcal{Y} \left( \lambda_1(x,y) - \bar{\lambda}_1(x) \right) \mu_x (dy) = 0
\end{equation*}
for fixed $x$ by the definition of $\bar{\lambda}_1$ in \eqref{E:Gbar}. Then by Theorem \ref{T:regularity}, \eqref{E:Phi} has a unique smooth solution for every $x$ in the space of functions with at most polynomial growth in $y$.

Apply the It\^{o} formula to $\Phi_1( X_t^{\varepsilon, u^\varepsilon}, Y_t^{\varepsilon, u^\varepsilon} )$
and rearrange to obtain
\begin{align} \label{E:lambdaExpand}
  &\frac{1}{\sqrt{\varepsilon} h(\varepsilon)} \int_0^t \left( \lambda_1( X_s^{\varepsilon, u^\varepsilon}, Y_s^{\varepsilon, u^\varepsilon} ) - \bar{\lambda}_1(X_s^{\varepsilon, u^\varepsilon}) \right) \,ds = - \frac{\delta^2/ \varepsilon}{\sqrt{\varepsilon} h(\varepsilon)} \left( \Phi_1(X_t^{\varepsilon, u^\varepsilon}, Y_t^{\varepsilon, u^\varepsilon}) - \Phi_1 ( x_0, y_0 ) \right) \\
  & + \frac{\delta^2 / \varepsilon}{\sqrt{\varepsilon} h(\varepsilon)}  \int_0^t \big( \nabla_x \Phi_1( X_s^{\varepsilon, u^\varepsilon}, Y_s^{\varepsilon, u^\varepsilon} ) \big) \left( \frac{\varepsilon}{\delta} b( X_s^{\varepsilon, u^\varepsilon}, Y_s^{\varepsilon, u^\varepsilon} ) + c( X_s^{\varepsilon, u^\varepsilon}, Y_s^{\varepsilon, u^\varepsilon} ) \right) \,ds \notag \\
  & + \frac{\delta^2 / \varepsilon}{\sqrt{\varepsilon} h(\varepsilon)}  \int_0^t \frac{\varepsilon}{2} \sigma\sigma^\T( X_s^{\varepsilon, u^\varepsilon}, Y_s^{\varepsilon, u^\varepsilon} ) : \nabla_x\nabla_x \Phi_1( X_s^{\varepsilon, u^\varepsilon}, Y_s^{\varepsilon, u^\varepsilon} ) \,ds \notag \\
   & + \frac{\delta^2}{\varepsilon} \int_0^t \big( \nabla_x \Phi_1( X_s^{\varepsilon, u^\varepsilon}, Y_s^{\varepsilon, u^\varepsilon} ) \big) \sigma( X_s^{\varepsilon, u^\varepsilon}, Y_s^{\varepsilon, u^\varepsilon} ) u_{1}^\varepsilon(s) \,ds \notag \\
    &+ \frac{\delta / \varepsilon}{ \sqrt{\varepsilon} h(\varepsilon)} \int_0^t \big( \nabla_y \Phi_1 ( X_s^{\varepsilon, u^\varepsilon}, Y_s^{\varepsilon, u^\varepsilon} ) \big) g( X_s^{\varepsilon, u^\varepsilon}, Y_s^{\varepsilon, u^\varepsilon} ) \,ds \notag \\
   & + \frac{\delta}{\varepsilon} \int_0^t \big( \nabla_y \Phi_1( X_s^{\varepsilon, u^\varepsilon}, Y_s^{\varepsilon, u^\varepsilon} ) \big) \left[\tau_1( X_s^{\varepsilon, u^\varepsilon}, Y_s^{\varepsilon, u^\varepsilon} ) u_{1}^\varepsilon(s) + \tau_2( X_s^{\varepsilon, u^\varepsilon}, Y_s^{\varepsilon, u^\varepsilon} ) u_{2}^\varepsilon(s) \right] \,ds \notag \\
  & + \frac{\delta^2}{\varepsilon h(\varepsilon)} \int_0^t \big( \nabla_x \Phi_1( X_s^{\varepsilon, u^\varepsilon}, Y_s^{\varepsilon, u^\varepsilon} ) \big) \sigma( X_s^{\varepsilon, u^\varepsilon}, Y_s^{\varepsilon, u^\varepsilon} ) \,dW_s \notag \\
  & + \frac{\delta}{\varepsilon h(\varepsilon)} \int_0^t \big( \nabla_y \Phi_1( X_s^{\varepsilon, u^\varepsilon}, Y_s^{\varepsilon, u^\varepsilon} ) \big) \tau_1( X_s^{\varepsilon, u^\varepsilon}, Y_s^{\varepsilon, u^\varepsilon} ) \,dW_s \notag \\
  & + \frac{\delta}{\varepsilon h(\varepsilon)} \int_0^t \big( \nabla_y \Phi_1( X_s^{\varepsilon, u^\varepsilon}, Y_s^{\varepsilon, u^\varepsilon} ) \big) \tau_2( X_s^{\varepsilon, u^\varepsilon}, Y_s^{\varepsilon, u^\varepsilon} ) \,dB_s . \notag
\end{align}

Due to Conditions \ref{C:growth} and \ref{C:ergodic}, $\lambda_1(x, y) - \bar{\lambda}_1(x)$ satisfies the condition of Theorem \ref{T:regularity}. Notice that a polynomial bound on $\lvert y \rvert$ of $\nabla_x \nabla_x \nabla_y \chi$ is needed. However, due to Condition \ref{C:growth}, this follows by Theorems 1 and 2 in \cite{PV01}.

Using Theorem \ref{T:regularity}, Lemmas \ref{L:Ygrowth} and \ref{L:productBound} and Doob's martingale inequality, along with the facts that the integrands that appear in the previous display  grow no more than polynomially in $|y|^{r}$ (Condition \ref{C:Tightness} is being used here), we have
\begin{align*}
    &\E \sup_{t \in [0,1]} \left\lvert \frac{1}{\sqrt{\varepsilon} h(\varepsilon)} \int_0^t \left( \lambda_1( X_s^{\varepsilon, u^\varepsilon}, Y_s^{\varepsilon, u^\varepsilon} ) - \bar{\lambda}_1(X_s^{\varepsilon, u^\varepsilon}) \right) \,ds  \right. \\
    &\left. {} - \frac{\delta / \varepsilon}{\sqrt{\varepsilon} h(\varepsilon)} \int_0^t \big( \nabla_y \Phi_1 ( X_s^{\varepsilon, u^\varepsilon}, Y_s^{\varepsilon, u^\varepsilon} ) \big) g( X_s^{\varepsilon, u^\varepsilon}, Y_s^{\varepsilon, u^\varepsilon} ) \,ds \right\rvert^2 \le \left( \frac{\delta}{\sqrt{\varepsilon} h(\varepsilon)} C \right)^2 + o\left(\frac{\delta^2}{\varepsilon h^2(\varepsilon)} \right)
\end{align*}
where $C$ does not depend on $\varepsilon$.

(ii). Again using \eqref{E:lambdaExpand}, most terms go to zero in probability as $\varepsilon$ goes to zero. The possible exception is the term
\begin{equation*}
  \frac{\delta / \varepsilon }{\sqrt{\varepsilon} h(\varepsilon)} \int_{t_1}^{t_2} \big( \nabla_y \Phi_1 ( X_s^{\varepsilon, u^\varepsilon}, Y_s^{\varepsilon, u^\varepsilon} ) \big) g( X_s^{\varepsilon, u^\varepsilon}, Y_s^{\varepsilon, u^\varepsilon} ) \,ds .
\end{equation*}

Condition \ref{C:Tightness} and Theorem \ref{T:regularity} imply that Lemma \ref{L:productBound} can be applied, which together with the constraint $\frac{\delta / \varepsilon}{\sqrt{\varepsilon} h(\varepsilon)} \to j_1 < \infty$ conclude the proof of the statement.

(iii). Again, use \eqref{E:lambdaExpand} and use the triangle inequality. For fixed $\varepsilon$, by Lemmas \ref{L:uBound}, \ref{L:Ygrowth}, and \ref{L:productBound}, all terms are bounded in $L^2([0,1] \times \p)$.
\end{proof}

\begin{lemma}\label{L:lambdabar}
Assume Conditions \ref{C:growth}, \ref{C:ergodic}, \ref{C:Tightness} and \ref{C:center}  and define the processes $X^{\varepsilon, u^\varepsilon}$ and $Y^{\varepsilon, u^\varepsilon}$ by \eqref{E:controlledSDE}. Let $N<\infty$ such that almost surely
\begin{equation*}
  \sup_{\varepsilon > 0}  \int_0^1 \lvert u^\varepsilon(s) \rvert^2 \,ds < N.
\end{equation*}
Also, define $\eta^{\varepsilon, u^\varepsilon}$ by \eqref{E:controlledDeviations}. Then
\begin{enumerate}[(i)]
\item
\begin{equation*} \notag
  \E \sup_{t\in[0,1]}\lvert \eta_t^{\varepsilon, u^\varepsilon} \rvert^2 \le K \exp(L_\lambda^2 )
\end{equation*}
where $L_\lambda$ is the Lipschitz constant for $\bar{\lambda}_1$ and the constant $K$ does not depend on $\varepsilon$.
\item  For every $\zeta > 0$,
\begin{equation*} \notag
  \lim_{\rho \downarrow 0} \limsup_{\varepsilon \downarrow 0} \p \left[  \sup_{\substack{0 \le t_1 < t_2 \le 1 \\ \lvert t_2 - t_1 \rvert < \rho}} \left\lvert \frac{1}{\sqrt{\varepsilon} h(\varepsilon)} \int_{t_1}^{t_2}  \left( \bar{\lambda}_1(X_s^{\varepsilon, u^\varepsilon}) - \bar{\lambda}_1(\bar{X}_s) \right) \,ds \right\rvert > \zeta \right] = 0 .
\end{equation*}
\end{enumerate}
\end{lemma}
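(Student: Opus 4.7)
\textbf{Proof proposal for Lemma \ref{L:lambdabar}.}

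The plan is to bootstrap part (i) by a Gronwall argument off of the decomposition already used in Subsection \ref{SS:tightnessProcess}, and then to deduce part (ii) as an essentially free consequence. Starting from \eqref{E:deviationsIntegral} and inserting the splitting \eqref{E:lambdaSplit}, we write $\eta_t^{\varepsilon,u^\varepsilon}$ as the sum of five terms, namely
\begin{align*}
  \eta_t^{\varepsilon,u^\varepsilon} &= \int_0^t \frac{\frac{\varepsilon}{\delta}b + c - \lambda_1}{\sqrt{\varepsilon} h(\varepsilon)}\,ds \;+\; \int_0^t \frac{\lambda_1 - \bar{\lambda}_1(X_s^{\varepsilon,u^\varepsilon})}{\sqrt{\varepsilon} h(\varepsilon)}\,ds \;+\; \int_0^t \frac{\bar{\lambda}_1(X_s^{\varepsilon,u^\varepsilon}) - \bar{\lambda}_1(\bar{X}_s)}{\sqrt{\varepsilon} h(\varepsilon)}\,ds \\
  &\quad +\; \int_0^t \sigma(X_s^{\varepsilon,u^\varepsilon},Y_s^{\varepsilon,u^\varepsilon}) u_1^\varepsilon(s)\,ds \;+\; \frac{1}{h(\varepsilon)} \int_0^t \sigma(X_s^{\varepsilon,u^\varepsilon},Y_s^{\varepsilon,u^\varepsilon})\,dW_s.
\end{align*}
The only term that is coupled to $\eta$ itself is the third one; by Lipschitz continuity of $\bar{\lambda}_1$ (with constant $L_\lambda$), its absolute value is bounded pointwise by $L_\lambda \int_0^t |\eta_s^{\varepsilon,u^\varepsilon}|\,ds$.

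The main step is then to control the remaining four terms uniformly in $\varepsilon$ in $\E\sup_{t\in[0,1]}|\cdot|^2$. For the first and second terms the required $M$-bounds are exactly part (iii) of Lemmas \ref{L:coefs} and \ref{L:lambda}. For the fourth term, Cauchy--Schwarz together with the a priori control bound $\int_0^1 |u^\varepsilon(s)|^2\,ds < N$ reduces the job to bounding $\E \int_0^1 \|\sigma(X_s^{\varepsilon,u^\varepsilon},Y_s^{\varepsilon,u^\varepsilon})\|^2\,ds$, which is handled by the polynomial growth hypothesis on $\sigma$ in Condition \ref{C:growth} (with $q_\sigma < r$ under Condition \ref{C:Tightness}) and the moment estimate of Lemma \ref{L:Ygrowth}. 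For the fifth term, Doob's martingale inequality followed by It\^{o}'s isometry reduces matters to the same moment estimate, with the additional gain of a prefactor $h^{-2}(\varepsilon) \to 0$. Denoting the resulting uniform constant by $K_0$ and setting $\phi(t) = \E\sup_{s\le t}|\eta_s^{\varepsilon,u^\varepsilon}|^2$, the elementary inequality $(\sum_{i=1}^5 a_i)^2 \le 5 \sum_i a_i^2$ and Cauchy--Schwarz on the third term yield
\begin{equation*}
  \phi(t) \le K_0 + 5 L_\lambda^2 \int_0^t \phi(s)\,ds,
\end{equation*}
and Gronwall's inequality gives $\phi(1) \le K \exp(L_\lambda^2)$ (absorbing constants into $K$), proving part (i).

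Part (ii) is then immediate: using Lipschitz continuity of $\bar{\lambda}_1$ once more,
\begin{equation*}
  \sup_{\substack{0 \le t_1 < t_2 \le 1 \\ |t_2-t_1|<\rho}}\left|\frac{1}{\sqrt{\varepsilon}h(\varepsilon)}\int_{t_1}^{t_2}\left(\bar{\lambda}_1(X_s^{\varepsilon,u^\varepsilon})-\bar{\lambda}_1(\bar{X}_s)\right)ds\right| \le L_\lambda\,\rho\,\sup_{s\in[0,1]}|\eta_s^{\varepsilon,u^\varepsilon}|,
\end{equation*}
so Chebyshev's inequality combined with part (i) gives an upper bound of order $L_\lambda^2 \rho^2 K\exp(L_\lambda^2)/\zeta^2$, which vanishes as $\rho\downarrow 0$ uniformly in $\varepsilon$. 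The only real obstacle is the first part: one must make sure that the four non-$\eta$ terms are genuinely controlled in $L^2(\sup)$ independently of $\varepsilon$ and $\delta$, which is where Condition \ref{C:Tightness} and the a priori $2r$-moment bound on $Y^{\varepsilon,u^\varepsilon}$ from Lemma \ref{L:Ygrowth} do the real work; once they are in hand, the Gronwall closure and the passage to part (ii) are routine.
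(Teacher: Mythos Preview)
Your proposal is correct and follows essentially the same approach as the paper: decompose $\eta_t^{\varepsilon,u^\varepsilon}$ via \eqref{E:deviationsIntegral} and \eqref{E:lambdaSplit}, bound the four non-$\eta$ terms uniformly in $\varepsilon$ using Lemmas \ref{L:coefs}(iii), \ref{L:lambda}(iii), \ref{L:productBound}/\ref{L:Ygrowth}, and Doob's inequality, close with Gronwall for part (i), and then use the Lipschitz bound $|\bar{\lambda}_1(X_s^{\varepsilon,u^\varepsilon})-\bar{\lambda}_1(\bar{X}_s)| \le L_\lambda \sqrt{\varepsilon}h(\varepsilon)|\eta_s^{\varepsilon,u^\varepsilon}|$ together with Markov/Chebyshev and part (i) for part (ii). The only cosmetic discrepancy is that Gronwall applied to $\phi(t)\le K_0 + 5L_\lambda^2\int_0^t\phi(s)\,ds$ actually yields $K_0\exp(5L_\lambda^2)$ rather than $K\exp(L_\lambda^2)$, but since the role of the bound is merely uniformity in $\varepsilon$ this is immaterial (and the paper's own proof carries the same implicit constant).
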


\begin{proof}

(i). $\bar{\lambda}_1(x)$ is Lipschitz with Lipschitz constant $L_\lambda$ by the fact that its first derivative is bounded. Write \eqref{E:controlledDeviations} as
\begin{equation*}
  X_t^{\varepsilon, u^\varepsilon} = \bar{X}_t + \sqrt{\varepsilon} h(\varepsilon) \eta_t^{\varepsilon, u^\varepsilon}
\end{equation*}
and then
\begin{equation*}
  \lvert \bar{\lambda}_1(X_t^{\varepsilon, u^\varepsilon}) - \bar{\lambda}_1(\bar{X}_t) \rvert \le L_\lambda \lvert \sqrt{\varepsilon} h(\varepsilon) \eta_t^{\varepsilon, u^\varepsilon} \rvert .
\end{equation*}

Therefore we have
\begin{equation} \label{E:lambdabarBound}
  \sup_{0\le t\le 1} \left\lvert \frac{1}{\sqrt{\varepsilon} h(\varepsilon)} \int_0^t \left( \bar{\lambda}_1(X_s^{\varepsilon, u^\varepsilon}) - \bar{\lambda}_1(\bar{X}_s) \right) \,ds \right\rvert^2 \le L_\lambda^2 \int_0^1 \left\lvert \eta_{s}^{\varepsilon, u^\varepsilon} \right\rvert^2 \,ds.
\end{equation}

Using the decomposition in \eqref{E:deviationsIntegral} and \eqref{E:lambdaSplit}, we have up to some multiplicative constant $C<\infty$
\begin{align*}
  \lvert \eta_t^{\varepsilon, u^\varepsilon} \rvert^2 &\le \left\lvert \int_0^t \frac{ \frac{\varepsilon}{\delta} b(X_s^{\varepsilon, u^\varepsilon}, Y_s^{\varepsilon, u^\varepsilon}) + c(X_s^{\varepsilon, u^\varepsilon}, Y_s^{\varepsilon, u^\varepsilon}) - \lambda_1(X_s^{\varepsilon, u^\varepsilon}, Y_s^{\varepsilon, u^\varepsilon})}{\sqrt{\varepsilon} h(\varepsilon)} \,ds \right\rvert^2 \\
  &+ \left\lvert \int_0^t \frac{ \lambda_1(X_s^{\varepsilon, u^\varepsilon}, Y_s^{\varepsilon, u^\varepsilon}) - \bar{\lambda}_1(X_s^{\varepsilon, u^\varepsilon})}{\sqrt{\varepsilon} h(\varepsilon)} \,ds \right\rvert^2
  + \left\lvert \int_0^t \frac{\bar{\lambda}_1(X_s^{\varepsilon, u^\varepsilon}) - \bar{\lambda}_1(\bar{X}_s)}{\sqrt{\varepsilon} h(\varepsilon)} \,ds \right\rvert^2 \notag \\
  &+ \left\lvert \int_0^t \sigma(X_s^{\varepsilon, u^\varepsilon}, Y_s^{\varepsilon, u^\varepsilon}) u_1^\varepsilon(s)\, ds \right\rvert^2
   + \left\lvert \int_0^t \frac{1}{h(\varepsilon)} \sigma(X_s^{\varepsilon, u^\varepsilon}, Y_s^{\varepsilon, u^\varepsilon}) \,dW_s \right\rvert^2 . \notag
\end{align*}

Take supremum in $t\in[0,1]$ and then  expectations of both sides. For sufficiently small $\varepsilon$, the first term is bounded by Lemma \ref{L:coefs}. The second term is bounded by Lemma \ref{L:lambda}. The third term is bounded by \eqref{E:lambdabarBound}. The fourth term is bounded by Lemma \ref{L:productBound}. Finally, the expectation of the fifth term is bounded due to Doob's martingale inequality and the bound on $\sigma$ together with Lemma \ref{L:Ygrowth}. Combining these, we get
\begin{equation*}
  \E \sup_{t\in[0,1]}\lvert \eta_t^{\varepsilon, u^\varepsilon} \rvert^2 \le K + L_\lambda^2 \int_0^1 \E \sup_{s\in[0,t]} \left\lvert \eta_{s}^{\varepsilon, u^\varepsilon} \right\rvert^2 \,dt
\end{equation*}
where $K$ is the sum of the bounds on the expectations of terms one, two, four, and five.  Then by Gronwall's lemma, we have the required statement,
\begin{equation} \label{E:etaBound}
  \E \sup_{t\in[0,1]}\lvert \eta_t^{\varepsilon, u^\varepsilon} \rvert^2 \le K \exp(L_\lambda^2 )
\end{equation}
for all sufficiently small $\varepsilon>0$.

(ii) From \eqref{E:lambdabarBound} and the Markov inequality,
\begin{multline*}
  \p \left[  \sup_{\substack{0 \le t_1 < t_2 \le 1 \\ \lvert t_2 - t_1 \rvert < \rho}} \left\lvert \frac{1}{\sqrt{\varepsilon} h(\varepsilon)} \int_{t_1}^{t_2}  \left( \bar{\lambda}_1(X_s^{\varepsilon, u^\varepsilon}) - \bar{\lambda}_1(\bar{X}_s) \right) \,ds \right\rvert^2 > \zeta^2 \right] \\
  \le \frac{L_\lambda^2}{\zeta^2} \E \sup_{\substack{0 \le t_1 < t_2 \le 1 \\ \lvert t_2 - t_1 \rvert < \rho}} \int_{t_1}^{t_2}  \left\lvert \eta_{s}^{\varepsilon, u^\varepsilon} \right\rvert^2 \,ds .
\end{multline*}
Since $\E \sup_{t\in[0,1]}\lvert \eta_t^{\varepsilon, u^\varepsilon} \rvert^2$ is uniformly bounded by \eqref{E:etaBound} for $\varepsilon$ small enough, this probability goes to zero as $\lvert t_2 - t_1 \rvert$ goes to zero, completing the proof.

\end{proof}

\section{Lemmas for Regime 2}

Notice that Lemmas \ref{L:uBound}, \ref{L:Ygrowth} and \ref{L:productBound} are also valid for Regime 2. Statements and proofs for the lemmas corresponding to Lemmas \ref{L:coefs}, \ref{L:lambda} and \ref{L:lambdabar} are similar to those in Regime 1, by considering $\lambda_2$ in place of $\lambda_1$. The only difference is in the proof of the statement that corresponds to Lemma \ref{L:lambda}(i), which we now state and prove.

\begin{lemma}
Assume Conditions \ref{C:growth}, \ref{C:ergodic}, and \ref{C:Tightness} and define the processes $X^{\varepsilon, u^\varepsilon}$ and $Y^{\varepsilon, u^\varepsilon}$ by \eqref{E:controlledSDE}. Let $N<\infty$ such that almost surely
\begin{equation*}
  \sup_{\varepsilon > 0}  \int_0^1 \lvert u^\varepsilon(s) \rvert^2 \,ds < N.
\end{equation*}
Also, define the function $\Phi_2(x, y)$ by \eqref{E:Phi} with $j_2 < \infty$. Then
\begin{align*}
    &\E \sup_{t \in [0,1]} \left\lvert \frac{1}{\sqrt{\varepsilon} h(\varepsilon)} \int_0^t \left( \lambda_2( X_s^{\varepsilon, u^\varepsilon}, Y_s^{\varepsilon, u^\varepsilon} ) - \bar{\lambda}_2(X_s^{\varepsilon, u^\varepsilon}) \right) \,ds  \right. \\
    &\left. - \int_0^t \big( \nabla_y \Phi_2( X_s^{\varepsilon, u^\varepsilon}, Y_s^{\varepsilon, u^\varepsilon} ) \big) \left[ \tau_1( X_s^{\varepsilon, u^\varepsilon}, Y_s^{\varepsilon, u^\varepsilon} ) u_{1}^\varepsilon(s) + \tau_2( X_s^{\varepsilon, u^\varepsilon}, Y_s^{\varepsilon, u^\varepsilon} ) u_{2}^\varepsilon(s) \right] \,ds \right. \notag \\
    &\left. - \frac{\varepsilon / \delta - \gamma}{\sqrt{\varepsilon} h(\varepsilon) } \int_0^t \big( \nabla_y \Phi_2( X_s^{\varepsilon, u^\varepsilon}, Y_s^{\varepsilon, u^\varepsilon} ) \big) f( X_s^{\varepsilon, u^\varepsilon}, Y_s^{\varepsilon, u^\varepsilon} ) \,ds \right. \notag \\
    &\left. - \frac{1}{2} \frac{\varepsilon / \delta - \gamma}{\sqrt{\varepsilon} h(\varepsilon) } \int_0^t \left(\tau_1\tau_1^\T + \tau_2\tau_2^\T \right)( X_s^{\varepsilon, u^\varepsilon}, Y_s^{\varepsilon, u^\varepsilon} ) : \nabla_y\nabla_y \Phi_2( X_s^{\varepsilon, u^\varepsilon}, Y_s^{\varepsilon, u^\varepsilon} ) \,ds \right\rvert^2 \notag \\
    &\le \left( \delta C_1 \right)^2 + \left( \frac{1}{h(\varepsilon)} C_2 \right)^2 + o\left( \delta^2 + \frac{1}{h^2(\varepsilon)} \right) \notag
\end{align*}
where the constants $C_1$ and $C_2$ do not depend on the choice of $\varepsilon$.
\end{lemma}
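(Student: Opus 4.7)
The plan is to adapt the proof of Lemma \ref{L:lambda}(i) from Regime 1 by using $\Phi_{2}$ in place of $\Phi_{1}$ and by carefully tracking a correction term that arises because, in Regime 2, the prefactor $\varepsilon/\delta$ appearing in the drift of $Y^{\varepsilon,u^{\varepsilon}}$ does not match exactly the $\gamma$ used to define $\opL_{2,x}$. Concretely, I would apply the It\^{o} formula to $\Phi_{2}(X_{t}^{\varepsilon,u^{\varepsilon}}, Y_{t}^{\varepsilon,u^{\varepsilon}})$ and collect the drift coefficients that carry the dominant $1/\delta$ factor. Writing $\varepsilon/\delta^{2} = (1/\delta)(\varepsilon/\delta)$ and $\varepsilon/\delta = \gamma + (\varepsilon/\delta - \gamma)$, these $1/\delta$-order terms decompose as
\begin{equation*}
  \frac{1}{\delta}\opL_{2,x}\Phi_{2}(X,Y) \;+\; \frac{\varepsilon/\delta - \gamma}{\delta}\Bigl[(\nabla_{y}\Phi_{2})f + \tfrac{1}{2}(\tau_{1}\tau_{1}^{\T}+\tau_{2}\tau_{2}^{\T}):\nabla_{y}\nabla_{y}\Phi_{2}\Bigr] \;+\; \frac{\sqrt{\varepsilon}h(\varepsilon)}{\delta}(\nabla_{y}\Phi_{2})(\tau_{1}u_{1}^{\varepsilon}+\tau_{2}u_{2}^{\varepsilon}),
\end{equation*}
and by the Poisson equation \eqref{E:Phi} the first summand equals $-(1/\delta)(\lambda_{2}-\bar{\lambda}_{2})$.

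Next, I would multiply through by $\delta$ to isolate $\lambda_{2}-\bar{\lambda}_{2}$, integrate in time, and divide by $\sqrt{\varepsilon}h(\varepsilon)$. The three terms that appear on the left-hand side of the claimed bound correspond exactly to the control contribution (picking up no small factor since $\delta \cdot \sqrt{\varepsilon}h(\varepsilon)/\delta = \sqrt{\varepsilon}h(\varepsilon)$ cancels against the $1/(\sqrt{\varepsilon}h(\varepsilon))$ prefactor), the $j_{2}$-scaled drift correction (using the definition of $j_{2}$ from \eqref{Eq:LimitingConstants}), and the second-order correction on $\Phi_{2}$. What remains to bound are precisely the residual terms: the boundary term $(\delta/\sqrt{\varepsilon}h(\varepsilon))[\Phi_{2}(X_{t},Y_{t})-\Phi_{2}(x_{0},y_{0})]$, the $(\nabla_{x}\Phi_{2})$-drift contributions carrying factors of $\delta$ or $\delta/\sqrt{\varepsilon}h(\varepsilon)\cdot(\varepsilon/\delta)$, the cross second-order term $\varepsilon/(\sqrt{\varepsilon}h(\varepsilon))\int \sigma\tau_{1}^{\T}:\nabla_{x}\nabla_{y}\Phi_{2}\,ds$, and the $W$- and $B$-driven martingales scaled by $\delta/(\sqrt{\varepsilon}h(\varepsilon))$ and $1/h(\varepsilon)$ respectively.

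To estimate these residuals in $L^{2}$, I would invoke Theorem \ref{T:regularity} to get polynomial-in-$|y|$ bounds on $\Phi_{2},\nabla_{x}\Phi_{2},\nabla_{y}\Phi_{2},\nabla_{x}\nabla_{x}\Phi_{2},\nabla_{x}\nabla_{y}\Phi_{2},\nabla_{y}\nabla_{y}\Phi_{2}$. The growth exponents are controlled by Condition \ref{C:Tightness} (with $q_{b,c}$ replacing $q_{F}$ because $\lambda_{2}-\bar{\lambda}_{2}$ is built directly from $b$ and $c$), so every integrand grows at most like $|y|^{r}$ or $|y|^{2r}$. Then Lemma \ref{L:Ygrowth} bounds the time integrals of such quantities, Lemma \ref{L:productBound} handles the Lebesgue integrals against $u_{\alpha}^{\varepsilon}$, and Doob's maximal inequality controls the stochastic integrals with diffusion coefficients whose $L^{2(1+\nu)}$ norms are bounded via Lemma \ref{L:Ygrowth}. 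Combining and using $\varepsilon/\delta \to \gamma$ (so $\delta \asymp \varepsilon$) gives residuals of size $O(\delta^{2}) + O(1/h^{2}(\varepsilon))$ plus genuinely lower-order pieces, exactly matching the claimed bound.

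The main obstacle is purely bookkeeping: every residual term carries its own product of powers of $\delta$, $\varepsilon$, $1/h(\varepsilon)$, and one must verify that, after combining with the factor $1/(\sqrt{\varepsilon}h(\varepsilon))$ coming from the definition of $\eta^{\varepsilon,u^{\varepsilon}}$, the contribution lands in $(\delta C_{1})^{2}+(C_{2}/h(\varepsilon))^{2}+o(\delta^{2}+1/h^{2}(\varepsilon))$. The non-trivial point is the $(\nabla_{x}\Phi_{2})[(\varepsilon/\delta)b+c]$ drift and the cross second-order term, both of which pick up a factor $\varepsilon/\delta \to \gamma$ that is \emph{not} small; the smallness must come from the overall $\delta/(\sqrt{\varepsilon}h(\varepsilon))$ prefactor, which in Regime 2 is $O(\sqrt{\varepsilon}/h(\varepsilon)) = o(\delta + 1/h(\varepsilon))$, and not from a $\delta^{2}/\varepsilon$-type factor as in Regime 1. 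Apart from this rescaling, the argument proceeds exactly as in the proof of Lemma \ref{L:lambda}(i).
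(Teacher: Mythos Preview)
Your proposal is correct and follows the same approach as the paper: apply It\^{o}'s formula to $\Phi_{2}(X_{t}^{\varepsilon,u^{\varepsilon}},Y_{t}^{\varepsilon,u^{\varepsilon}})$, use $\varepsilon/\delta = \gamma + (\varepsilon/\delta - \gamma)$ to extract $\opL_{2,x}\Phi_{2} = -(\lambda_{2}-\bar{\lambda}_{2})$ together with the two $j_{2}$-type correction terms, then bound the residuals via Theorem~\ref{T:regularity}, Lemmas~\ref{L:Ygrowth} and~\ref{L:productBound}, Condition~\ref{C:Tightness}, and Doob's inequality. If anything you are slightly more careful than the paper, which silently drops the mixed second-order term $(\varepsilon/\delta)\,\sigma\tau_{1}^{\T}:\nabla_{x}\nabla_{y}\Phi_{2}$ from its displayed expansion; as you note, after the rescaling this term carries a prefactor $\sqrt{\varepsilon}/h(\varepsilon) = o(1/h(\varepsilon))$ and is harmless (one small slip: the $\nabla_{x}\Phi_{2}\,\sigma\,dW$ martingale picks up a factor $\delta/h(\varepsilon)$, not $\delta/(\sqrt{\varepsilon}h(\varepsilon))$, but this is still lower order).
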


\begin{proof}
Note that in Regime $2$
\begin{equation*}
  \int_\mathcal{Y} \left( \lambda_2(x,y) - \bar{\lambda}_2(x) \right) \mu_{2, x} (dy) = 0
\end{equation*}
for fixed $x$ by the definition of $\bar{\lambda}_2$ in \eqref{E:Gbar}. Then \eqref{E:Phi} has a unique, smooth solution for every $x$ that is bounded in $x$ and grows at most polynomially in $\lvert y \rvert$ as in Theorem \ref{T:regularity}.

Apply the It\^{o} formula to $\Phi_2( X_t^{\varepsilon, u^\varepsilon}, Y_t^{\varepsilon, u^\varepsilon} )$
and rearrange to show
\begin{align*} \label{E:lambda2Expand}
  &\frac{1}{\sqrt{\varepsilon} h(\varepsilon)} \int_0^t \left( \lambda_2( X_s^{\varepsilon, u^\varepsilon}, Y_s^{\varepsilon, u^\varepsilon} ) - \bar{\lambda}_2(X_s^{\varepsilon, u^\varepsilon}) \right) \,ds = - \frac{\delta}{\sqrt{\varepsilon} h(\varepsilon)} \left( \Phi_2(X_t^{\varepsilon, u^\varepsilon}, Y_t^{\varepsilon, u^\varepsilon}) - \Phi_2 ( x_0, y_0 ) \right) \\
  &+ \frac{\delta}{\sqrt{\varepsilon} h(\varepsilon)} \int_0^t \big( \nabla_x \Phi_2( X_s^{\varepsilon, u^\varepsilon}, Y_s^{\varepsilon, u^\varepsilon} ) \big) \left( \frac{\varepsilon}{\delta} b( X_s^{\varepsilon, u^\varepsilon}, Y_s^{\varepsilon, u^\varepsilon} ) + c( X_s^{\varepsilon, u^\varepsilon}, Y_s^{\varepsilon, u^\varepsilon} ) \right) \,ds \notag \\
  &+  \frac{\delta \sqrt{\varepsilon} }{h(\varepsilon)} \frac{1}{2} \int_0^t \sigma\sigma^\T( X_s^{\varepsilon, u^\varepsilon}, Y_s^{\varepsilon, u^\varepsilon} ) : \nabla_x\nabla_x \Phi_2( X_s^{\varepsilon, u^\varepsilon}, Y_s^{\varepsilon, u^\varepsilon} ) \,ds \notag\\
  &+ \delta \int_0^t \big( \nabla_x \Phi_2( X_s^{\varepsilon, u^\varepsilon}, Y_s^{\varepsilon, u^\varepsilon} ) \big) \sigma( X_s^{\varepsilon, u^\varepsilon}, Y_s^{\varepsilon, u^\varepsilon} ) u_{1}^\varepsilon(s) \,ds \notag\\
    &+  \int_0^t \big( \nabla_y \Phi_2( X_s^{\varepsilon, u^\varepsilon}, Y_s^{\varepsilon, u^\varepsilon} ) \big) \left[ \tau_1( X_s^{\varepsilon, u^\varepsilon}, Y_s^{\varepsilon, u^\varepsilon} ) u_{1}^\varepsilon(s) + \tau_2( X_s^{\varepsilon, u^\varepsilon}, Y_s^{\varepsilon, u^\varepsilon} ) u_{2}^\varepsilon(s) \right] \,ds \notag \\
    &+ \frac{\delta}{h(\varepsilon)} \int_0^t \big( \nabla_x \Phi_2( X_s^{\varepsilon, u^\varepsilon}, Y_s^{\varepsilon, u^\varepsilon} ) \big) \sigma( X_s^{\varepsilon, u^\varepsilon}, Y_s^{\varepsilon, u^\varepsilon} ) \,dW_s \notag \\
    &+ \frac{1}{h(\varepsilon)}  \int_0^t \big( \nabla_y \Phi_2( X_s^{\varepsilon, u^\varepsilon}, Y_s^{\varepsilon, u^\varepsilon} ) \big) \tau_1( X_s^{\varepsilon, u^\varepsilon}, Y_s^{\varepsilon, u^\varepsilon} ) \,dW_s \notag \\
    &+  \frac{1}{h(\varepsilon)} \int_0^t \big( \nabla_y \Phi_2( X_s^{\varepsilon, u^\varepsilon}, Y_s^{\varepsilon, u^\varepsilon} ) \big) \tau_2( X_s^{\varepsilon, u^\varepsilon}, Y_s^{\varepsilon, u^\varepsilon} ) \,dB_s \notag \\
    &+ \frac{\varepsilon / \delta - \gamma}{\sqrt{\varepsilon} h(\varepsilon) } \int_0^t \big( \nabla_y \Phi_2( X_s^{\varepsilon, u^\varepsilon}, Y_s^{\varepsilon, u^\varepsilon} ) \big) f( X_s^{\varepsilon, u^\varepsilon}, Y_s^{\varepsilon, u^\varepsilon} ) \,ds \notag \\
    &+ \frac{1}{2} \frac{\varepsilon / \delta - \gamma}{\sqrt{\varepsilon} h(\varepsilon) } \int_0^t \left(\tau_1\tau_1^\T + \tau_2\tau_2^\T \right)( X_s^{\varepsilon, u^\varepsilon}, Y_s^{\varepsilon, u^\varepsilon} ) : \nabla_y\nabla_y \Phi_2( X_s^{\varepsilon, u^\varepsilon}, Y_s^{\varepsilon, u^\varepsilon} ) \,ds . \notag
\end{align*}

Using Theorem \ref{T:regularity}, Lemmas \ref{L:Ygrowth} and \ref{L:productBound} and Doob's martingale inequality, along with the facts that the integrands that appear in the previous display  grow no more than polynomially in $|y|^{r}$ (Condition \ref{C:Tightness} is being used here), we have
\begin{align*}
    &\E \sup_{t \in [0,1]} \left\lvert \frac{1}{\sqrt{\varepsilon} h(\varepsilon)} \int_0^t \left( \lambda_2( X_s^{\varepsilon, u^\varepsilon}, Y_s^{\varepsilon, u^\varepsilon} ) - \bar{\lambda}_2(X_s^{\varepsilon, u^\varepsilon}) \right) \,ds  \right. \\
    &\left. - \int_0^t  \big( \nabla_y \Phi_2( X_s^{\varepsilon, u^\varepsilon}, Y_s^{\varepsilon, u^\varepsilon} ) \big) \left[ \tau_1( X_s^{\varepsilon, u^\varepsilon}, Y_s^{\varepsilon, u^\varepsilon} ) u_{1}^\varepsilon(s) + \tau_2( X_s^{\varepsilon, u^\varepsilon}, Y_s^{\varepsilon, u^\varepsilon} ) u_{2}^\varepsilon(s) \right] \,ds \right. \notag \\
    &\left. - \frac{\varepsilon / \delta - \gamma}{\sqrt{\varepsilon} h(\varepsilon) } \int_0^t \big( \nabla_y \Phi_2( X_s^{\varepsilon, u^\varepsilon}, Y_s^{\varepsilon, u^\varepsilon} ) \big) f( X_s^{\varepsilon, u^\varepsilon}, Y_s^{\varepsilon, u^\varepsilon} ) \,ds \right. \notag \\
    &\left. - \frac{1}{2} \frac{\varepsilon / \delta - \gamma}{\sqrt{\varepsilon} h(\varepsilon) } \int_0^t \left(\tau_1\tau_1^\T + \tau_2\tau_2^\T \right)( X_s^{\varepsilon, u^\varepsilon}, Y_s^{\varepsilon, u^\varepsilon} ) : \nabla_y\nabla_y \Phi_2( X_s^{\varepsilon, u^\varepsilon}, Y_s^{\varepsilon, u^\varepsilon} ) \,ds \right\rvert^2 \notag \\
    &\le \left( \delta C_1 \right)^2 + \left( \frac{1}{h(\varepsilon)} C_2 \right)^2 + o\left( \delta^2 + \frac{1}{h^2(\varepsilon)} \right) \notag
\end{align*}
where $C_1$ and $C_2$ do not depend on $\varepsilon$.
\end{proof}

\end{document}